\documentclass[11pt]{amsart}

\usepackage{amsmath, amscd, amsthm, amssymb, graphics, mathrsfs, setspace,fancyhdr,geometry,tabularx,shapepar, hyperref, xcolor, tikz, cite, booktabs, dsfont,amsfonts, marvosym, amsbsy, bm, tikz, array, mathtools,arydshln,tikz-cd}
\usetikzlibrary{matrix,arrows,backgrounds}
\usepackage[all]{xy}
\usepackage{leftidx}

\usepackage{mathrsfs}
\usepackage{wrapfig}
\usepackage{listings}

\usepackage[OT2,T1]{fontenc}
\DeclareSymbolFont{cyrletters}{OT2}{wncyr}{m}{n}
\DeclareMathSymbol{\Sha}{\mathalpha}{cyrletters}{"58}
\DeclareMathSymbol{\Zhe}{\mathalpha}{cyrletters}{17}
\usepackage{wasysym}

\usepackage{apptools}
\usepackage{chngcntr}

\mathchardef\hyph="2D

\hypersetup{colorlinks=false}
\setlength{\textwidth}{6.0in}
\setlength{\textheight}{8.25in}
\setlength{\topmargin}{-0.125in}
\setlength{\oddsidemargin}{0.25in}
\setlength{\evensidemargin}{0.25in}
\theoremstyle{plain}
\newtheorem{theorem}{Theorem}[section]

\newtheorem{lemma}[theorem]{Lemma}

\newtheorem{proposition}[theorem]{Proposition}

\newtheorem{conjecture}[theorem]{Conjecture}
\newtheorem*{theorem*}{Theorem}
\newtheorem*{problem*}{Problem}
\newtheorem*{question*}{Question}
\newtheorem{theoremintro}{Theorem}

\theoremstyle{definition}
\newtheorem{definition}[theorem]{Definition}

\newtheorem{remark}[theorem]{Remark}

\newtheorem{ex}[theorem]{Example}

\AtAppendix{\counterwithin{theorem}{subsection}}

\numberwithin{equation}{section}

\newcommand{\colim}{\operatorname{colim}}

\newcommand{\Spec}{\operatorname{Spec}}

\newcommand{\Z}{{\mathbb Z}}
\newcommand{\R}{{\mathbb R}}

\newcommand{\A}{\mathbb{A}}
\newcommand{\G}{\mathbb{G}}

\newcommand{\im}{\operatorname{Im}}

\newcommand{\op}[1]{\operatorname{#1}}

\newcommand{\id}{\operatorname{id}}

\newcommand{\gm}{\mathbb{G}_{m}}

\newcommand{\Br}{\operatorname{Br}}
\newcommand{\GL}{\operatorname{GL}}
\newcommand{\PGL}{\operatorname{PGL}}
\newcommand{\Hom}{\operatorname{Hom}}

\newcommand{\Aut}{\operatorname{Aut}}
\newcommand{\Pic}{\operatorname{Pic}}
\newcommand{\Ext}{\operatorname{Ext}}

\newcommand{\bbP}{\mathbb{P}}

\newcommand{\leftexp}[2]{{\vphantom{#2}}^{#1}{#2}}
\newcommand{\weezer}{\leftexp{=}{\kern-0.23em\mathsf{W}}^{\kern-0.21em =}}
\newcommand*{\sheafhom}{\mathrm{H}\kern -.5pt om} 
% for Cartier duals
\newcommand{\cd}[1]{\mathscr{D}(#1)}
% for twisting #1 by #2
\newcommand{\twist}[2]{{}^{#2} \kern -2.5pt #1}
\newcommand{\ICP}{\Zhe}

\begin{document}

\title{Separable algebras and coflasque resolutions}

\author[Ballard]{Matthew R Ballard}
\address{Department of Mathematics, University of South Carolina, 
Columbia, SC 29208}
\email{ballard@math.sc.edu}
\urladdr{\url{http://www.matthewrobertballard.com}}

\author[Duncan]{Alexander Duncan}
\address{Department of Mathematics, University of South Carolina, 
Columbia, SC 29208}
\email{duncan@math.sc.edu}
\urladdr{\url{http://people.math.sc.edu/duncan/}}

\author[Lamarche]{Alicia Lamarche}
\address{Department of Mathematics, University of Utah, 
Salt Lake City, UT 84112}
\email{lamarchr@math.utah.edu}
\urladdr{\url{http://alicia.lamarche.xyz}}

\author[McFaddin]{Patrick K. McFaddin}
\address{Department of Mathematics, Fordham University, 
New York, NY 10023}
\email{pkmcfaddin@gmail.com}
\urladdr{\url{http://mcfaddin.github.io/}}

\begin{abstract}
Over a non-closed field, it is a common strategy to use separable
algebras as invariants to distinguish algebraic and geometric objects.
The most famous example is the deep connection between Severi-Brauer varieties
and central simple algebras.
For more general varieties, one might use endomorphism algebras of line bundles,
of indecomposable vector bundles, or of exceptional objects in their
derived categories.

Using Galois cohomology, we describe a new invariant of reductive algebraic groups
that captures precisely when this strategy will fail.
Our main result characterizes this invariant in terms of coflasque
resolutions of linear algebraic groups introduced by
Colliot-Th\'el\`ene.
We determine whether or not this invariant is trivial for many fields.
For number fields, we show it agrees with the Tate-Shafarevich group of
the linear algebraic group, up to behavior at real places.
\end{abstract}

\maketitle

%%%%%%%%%%%%%%%%%%%%%%%%%%%%%%%%%%%%%%%%%%%%%%%%%%%%%%%%%%%%%%%%
%%%%%%%%%%%%%%%%%%%%%%%%%%%%%%%%%%%%%%%%%%%%%%%%%%%%%%%%%%%%%%%%
%%%%%%%%%%%%%%%%%%%%%%%%%%%%%%%%%%%%%%%%%%%%%%%%%%%%%%%%%%%%%%%%
% INTRODUCTION %
%%%%%%%%%%%%%%%%%%%%%%%%%%%%%%%%%%%%%%%%%%%%%%%%%%%%%%%%%%%%%%%%
%%%%%%%%%%%%%%%%%%%%%%%%%%%%%%%%%%%%%%%%%%%%%%%%%%%%%%%%%%%%%%%%
%%%%%%%%%%%%%%%%%%%%%%%%%%%%%%%%%%%%%%%%%%%%%%%%%%%%%%%%%%%%%%%%

\section{Introduction}
\label{sec:intro}

Given a base field $k$, an $n$-dimensional \emph{Severi-Brauer variety}
$X$ over $k$ is an (\'etale) $k$-form of the projective space $\bbP^n_k$; in other
words, there exists a finite separable field extension $L/k$ such that $X_L := X
\times_{\Spec(k)} \Spec(L)$ is isomorphic to $\bbP^n_L$.
The isomorphism classes of Severi-Brauer varieties of dimension $n$
are in bijective correspondence with central simple algebras $A$ of
degree $n+1$, which are forms of the algebra $M_{n+1}(k)$ of $(n+1)\times(n+1)$
matrices over $k$.

From another perspective, the isomorphism classes of
$n$-dimensional Severi-Brauer varieties over $k$ are classified by the
elements of the Galois cohomology set $H^1(k,\PGL_{n+1})$.
There is an injective function
\[
H^1(k,\PGL_n) \hookrightarrow \Br(k) = H^2(k,\gm),
\]
functorial with respect to the field $k$, which associates a given
Severi-Brauer variety to the Brauer equivalence class of the
corresponding central simple algebra.

A \emph{separable algebra} $A$ over a field $k$ is a direct sum
\[
A = \bigoplus_{i=1}^r M_{n_i}(D_i)
\]
of matrix algebras $M_{n_i}(D_i)$ where each $D_i$ is a
finite-dimensional division
$k$-algebra whose center is a separable field extension of $k$.
Alternatively, a separable algebra is an \'etale $k$-form of a direct sum of
matrix algebras over $k$.

The del Pezzo surfaces of degree $6$ are all $k$-forms of one another.
Blunk~\cite{Blunk} demonstrated how to associate a form of a separable $k$-algebra
to each del Pezzo surface of degree $6$ in such a way that two surfaces
are isomorphic if and only if their corresponding algebras are
isomorphic.  In this case, the split del Pezzo surface has the associated separable algebra $M_2(k)^{\oplus 3} \oplus M_3(k)^{\oplus 2}$.

Both Severi-Brauer varieties and del Pezzo surfaces are examples of
\emph{arithmetic toric varieties}: normal varieties which admit a faithful action of a torus (Definition~\ref{defn:torus}) with dense open orbit.
In \cite{Duncan}, it is shown that one can distinguish isomorphism
classes of $k$-forms of an arithmetic toric variety $X$ by separable $k$-algebras
whenever forms of $X$ with a rational point are retract rational.
In all these cases, the separable algebras are the direct sums of
endomorphism algebras of certain indecomposable vector bundles on the variety $X$.

It is natural to ask: can one can distinguish $k$-forms for wider classes
of objects via separable algebras?
For example, can varieties be distinguished by endomorphism algebras of
exceptional objects in their derived categories?
Are there even more exotic constructions?
The purpose of this paper is to precisely describe a fundamental obstruction to
all such strategies.

Recall that, under mild technical conditions,
the isomorphism classes of $k$-forms of an algebraic object $X$
are in bijection with the Galois cohomology set $H^1(k,G)$, where $G$ is the
automorphism group scheme of $X$.
If $A$ is a separable $k$-algebra with an algebraic action of an
algebraic group $G$, then
there is an algebraic group homomorphism $G \to \Aut(A)$.
We define
\begin{equation} \label{eq:ZheDefine}
\ICP(k,G) := \bigcap_A \ker
\left( H^1(k,G) \to H^1(k,\Aut(A)) \right)
\end{equation}
where the intersection runs over all separable $k$-algebras $A$
with a $G$-action.
Informally, $\ICP(k,G)$ is the set of $k$-forms of $X$ that cannot
be distinguished using forms of separable $k$-algebras. 

As a consequence of our main result, we can completely characterize 
this invariant using the theory
of flasque and coflasque resolutions of reductive algebraic groups, pioneered by
Colliot-Th\'el\`ene~\cite{CT2004,CT2008}, which is reviewed in
Section~\ref{sec:coflasque} below.

\begin{theoremintro}
  \label{thm:intro_main_cor}
Let $G$ be a (connected) reductive algebraic group over $k$.
Then
\[
\ICP(k,G) = \op{im} \left(H^1(k,C) \to H^1(k,G)\right),
\]
where
\[
1 \to P \to C \to G \to 1
\]
is an exact sequence of algebraic groups such that
\begin{enumerate}
\item $P$ is a quasitrivial torus and
\item $C$ is an extension of a coflasque torus by a semisimple simply
connected group.
\end{enumerate}
\end{theoremintro}

We expand Theorem~\ref{thm:intro_main_cor} into 
Theorem~\ref{thm:big_equivalence} which lists five equivalent
characterizations of $\ICP(k,G)$.

It turns out that $\ICP(k,G)$ cannot be detected by a more general class
of probes than separable algebras, which is a priori larger.
As is well-known, the connecting homomorphism
\begin{displaymath} 
  H^1(k,\op{PGL}_1(A)) \to H^2(k,\op{GL}_1(Z(A)))
\end{displaymath}
is injective. Thus, if the composition
\begin{equation} \label{eq:intro_coh_inv}
  H^1(k,G) \to H^2(k,\op{GL}_1(Z(A))) 
\end{equation}
vanishes, then original map must have been trivial already. This leads one
to investigate a notion that captures Equation~\eqref{eq:intro_coh_inv}. 

\subsection{Cohomological invariants}
\label{sec:intro_CI}

Recall that the Galois cohomology pointed set $H^i(k,G)$ is functorial
in both $G$ and $k$.  In particular, fixing $G$, we may view $H^i(-,G)$
as a functor from the category of field extensions
of $k$ to the category of pointed sets
(or to groups, or to abelian groups, appropriately).
Let $\op{Inv}^2_\ast(G,S)$ be the group of \emph{normalized cohomological
invariants}, i.e., natural transformations
\[
\alpha : H^1(-,G) \to H^2(-,S)
\]
where $G$ is a linear algebraic group, $S$ is a commutative linear algebraic
group, and $\alpha$ takes the distinguished point to zero.

Recall that a linear algebraic group $G$ is \emph{special} 
if $H^1(K,G_K)$ is trivial for all field extensions $K/k$.
In Theorem~\ref{thm:big_equivalence} below, we will see that for
reductive algebraic groups $G$ we have many equivalent characterizations
of $\ICP(k,G)$. In particular,
\begin{equation} \label{eq:intro_icp_equals_coh_inv_kernels}
  \ICP(k,G) = \bigcap_S \bigcap_\alpha \op{ker}
  \left( \alpha(k) \colon H^1(k,G) \to H^2(k,S) \right)
\end{equation}
where the intersections run over all special tori $S$
and all normalized cohomological invariants $\alpha \in \op{Inv}^2_\ast(G,S)$.
In particular, for any finite separable field extension $F/k$, 
we have $H^2(k,S)\cong\Br(F)$ for some choice of special torus $S$. 

Thus, not only is $\ICP(k,G)$ an obstruction to differentiating
Brauer classes obtained from actions of $G$ on separable algebras,
but also to those obtained from almost completely arbitrary maps
(requiring only they behave well under field extensions). 

\subsection{Applications}
\label{sec:intro_apps}

Theorem~\ref{thm:intro_main_cor} allows us to compute $\ICP(k,G)$ in many cases of interest.
For example, the following consequences are discussed in
Section~\ref{sec:applications}:

\begin{itemize}
\item when $k$ is a finite field or
nonarchimedean local field $\ICP(k,G)$ is trivial.
\item if $S$ is a torus, the functor $\ICP(-,S)$ is
trivial if and only if $S$ is retract rational. 
\item if $S$ is a torus over a number field $k$, then
$$\Zhe(k,S)=\Sha^1(k,S).$$
\item if $G$ is semisimple and simply-connected over a number field, 
then $$\Zhe(k,G) = \prod_{v \text{ real}} \Zhe(k_v,G_v).$$ 
\item if $k$ is a totally imaginary number field, then
$$\Zhe(k,G)=\Sha^1(k,G).$$
\end{itemize}

Retract rationality will be recalled in Section~\ref{sec:coflasque} below (see Definition~\ref{defn:rationality})
and $\Sha^1(k,G)$ denotes the Tate-Shafarevich group,
which is discussed in Section~\ref{sec:applications}.
Indeed the notation $\Zhe$ was chosen to remind the reader of
$\Sha$.  The connection is made explicit for number fields in the
following:

\begin{theoremintro} \label{thm:Zhe_number_field}
Let $G$ be a reductive algebraic group over a number field $k$.
Then there exists a canonical isomorphism
\[
\ICP(k,G) = \Sha^1(k,G) \times \prod_{v \textrm{ real}} \ICP(k_v, G_{k_v}).
\]
\end{theoremintro}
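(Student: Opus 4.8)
The plan is to deduce this from Theorem~\ref{thm:main} together with standard local-global results for Galois cohomology over number fields. Fix a coflasque resolution $1 \to P \to C \to G \to 1$ with $P$ quasitrivial and $C$ an extension of a coflasque torus by a semisimple simply connected group, as guaranteed by Theorem~\ref{thm:main}, so that $\ICP(k,G) = \op{im}(H^1(k,C) \to H^1(k,G))$. The same sequence, base-changed to each completion $k_v$, computes $\ICP(k_v, G_{k_v}) = \op{im}(H^1(k_v, C) \to H^1(k_v, G))$, and by functoriality of coflasque resolutions with respect to the field all of these fit into a commutative diagram with the restriction maps $H^1(k, -) \to \prod_v H^1(k_v, -)$.

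First I would establish the key vanishing input: for the group $C$ appearing in the resolution, the Tate–Shafarevich set $\Sha^1(k,C)$ is trivial. This follows because $C$ is built from a semisimple simply connected group $C^{\mathrm{ss}}$ (for which $\Sha^1$ vanishes by Harder's theorem, at least away from real places, and in fact by Kneser–Harder–Chernousov $H^1(k_v, C^{\mathrm{ss}}) = 0$ for all nonarchimedean $v$ and strong approximation gives $\Sha^1 = 0$) and a coflasque torus $C^{\mathrm{tor}}$ (for which $\Sha^1$ vanishes because coflasque tori have $\Sha^2 = 0$ of the dual and, crucially, $\Sha^1$ of a coflasque torus is trivial — this is a theorem of Colliot-Thélène–Sansuc, since coflasque tori are precisely those with $H^1(k_v, \widehat{T}) = 0$ hence $\Sha^1(k,T) = \Sha^2_\omega$-type groups vanish). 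Chasing the exact sequence relating $H^1(k,C^{\mathrm{ss}})$, $H^1(k,C)$, $H^1(k,C^{\mathrm{tor}})$ and $H^2(k, C^{\mathrm{ss}})$ then yields $\Sha^1(k,C) = \ast$. I would also record that $\Sha^1(k,P) = \ast$ since $P$ is quasitrivial (hence a product of norm-one... rather, $P = \prod R_{L_i/k}\gm$, for which $H^1$ vanishes entirely by Shapiro plus Hilbert 90).

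Next, the main computation. An element $\xi \in \ICP(k,G)$ lifts to some $c \in H^1(k,C)$. Its image in $\prod_v H^1(k_v, G)$ is the restriction of $\xi$; I want to show the assignment $\xi \mapsto (\xi, (\xi_v)_{v \text{ real}})$, where the first coordinate is the class in $\Sha^1(k,G)$ obtained because locally everywhere $\xi_v$ comes from $H^1(k_v,C)$ and at nonarchimedean and complex places that forces $\xi_v$ to be trivial (using $H^1(k_v, C) \to H^1(k_v,G)$ has image... no — rather, using that $\ICP(k_v, G_{k_v})$ is trivial for $v$ nonarchimedean by the first bullet in Section~\ref{sec:intro_apps}, and trivial for $v$ complex since $H^1(\mathbb{C}, -) = \ast$). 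So every $\xi \in \ICP(k,G)$ is automatically locally trivial away from the real places, which gives a well-defined map $\ICP(k,G) \to \Sha^1(k,G) \times \prod_{v \text{ real}} \ICP(k_v, G_{k_v})$; the second component is just localization at the real places, which lands in $\ICP(k_v, G_{k_v})$ by functoriality. For injectivity and surjectivity I would argue: the fibers of $H^1(k,C) \to H^1(k,G)$ over a point are controlled by $H^1(k,P)$, which vanishes; combined with $\Sha^1(k,C) = \ast$, a point $\xi \in \ICP(k,G)$ that is trivial in $\Sha^1(k,G)$ and at every real place must itself be trivial (pull back to $H^1(k,C)$, use that it is locally trivial everywhere, conclude it is $\ast$ in $H^1(k,C)$ hence in $H^1(k,G)$). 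Surjectivity onto $\Sha^1(k,G)$ requires showing every globally-locally-trivial class of $G$ actually comes from $C$: a class in $\Sha^1(k,G)$ lifts locally everywhere to $C$ (since $H^1(k_v,C) \to H^1(k_v,G)$ is surjective onto the trivial class, or more precisely the connecting map to $H^2(k,P)$ detects the obstruction), and the global obstruction to lifting lies in $\Sha^2(k,P) = 0$ for $P$ quasitrivial, so the class lifts to $\Sha^1(k,C) = \ast$... one must be slightly careful here that "lifts to $C$" and "trivial in $\Sha^1(k,C)$" interact correctly, but since $H^1(k,P) = \ast$ the fiber is a single point and the argument closes. Surjectivity onto the real factors is handled by a patching/approximation argument: given a prescribed collection of local classes at the real places lying in $\ICP(k_v,G_{k_v})$, lift each to $H^1(k_v,C)$, and use that $H^1(k,C) \to \prod_{v \text{ real}} H^1(k_v,C)$ is surjective — this is the real-approximation property, valid because $C^{\mathrm{ss}}$ is simply connected (so real approximation holds, a result of Scheiderer/Colliot-Thélène) and $C^{\mathrm{tor}}$ is a torus with real approximation (coflasque, in particular flasque-adjacent, tori satisfy weak/real approximation) — then push down to $G$, and correct the resulting global class to be unramified outside the reals using that $\ICP(k,G)$ already knows its nonarchimedean localizations vanish.

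The main obstacle I anticipate is the real-approximation step and the precise bookkeeping of base points in the non-abelian cohomology fibration $P \to C \to G$: since $H^1$ of $G$ and $C$ are merely pointed sets, I need to twist by cocycles to compare fibers, and verify that "trivial in $\Sha^1$" genuinely means "in the image of $H^1(k,C)$ landing on the base point" rather than just "locally trivial." The cleanest route is probably to invoke the full strength of Theorem~\ref{thm:big_equivalence} — which presumably already packages the comparison $\ICP(k,G) = \op{im}(H^1(k,C) \to H^1(k,G))$ with compatible statements over all completions — and then reduce the theorem to the two purely arithmetic facts $\Sha^i(k,P) = \ast$ ($i = 1,2$, $P$ quasitrivial), $\Sha^1(k,C) = \ast$ ($C$ coflasque-by-simply-connected), and real approximation for $C$. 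Once those are in place, the isomorphism is a diagram chase.
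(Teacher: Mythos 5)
Your overall architecture (reduce everything to computing $H^1(k,C)$ for the coflasque resolution $1\to P\to C\to G\to 1$, kill the local contributions at finite and complex places via Lemma~\ref{lem:coflasque_local}, and use Kneser--Harder--Chernousov plus real approximation at the real places) is the same as the paper's. But your ``key vanishing input'' --- that $\Sha^1(k,C)=\ast$ because $C^{\mathrm{sc}}$ is simply connected and $C^{\mathrm{tor}}$ is a coflasque torus --- is false, and it is fatal: it is precisely the group $\Sha^1(k,C)$ that produces the factor $\Sha^1(k,G)$ in the statement you are proving. For a coflasque torus $T$ over a number field one has $H^1(k_v,T)=0$ at \emph{every} place (Tate--Nakayama at finite $v$, and coflasque $=$ quasi-trivial over $\R$), so $\Sha^1(k,T)=H^1(k,T)$, which is generally nonzero; coflasqueness of $\widehat{T}$ controls $H^1(\Gamma',\widehat{T})$, not $\Sha^2(k,\widehat{T})$, which is what would be dual to $\Sha^1(k,T)$. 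Indeed your own surjectivity argument exposes the contradiction: you correctly observe that a class in $\Sha^1(k,G)$ lifts to $H^1(k,C)$ because the obstruction lies in $\Sha^2(k,P)=0$, and since $H^1(k_v,P)=\ast$ the lift is locally trivial, i.e.\ lands in $\Sha^1(k,C)$; together with injectivity of $H^1(k,C)\to H^1(k,G)$ this gives $\Sha^1(k,G)\hookrightarrow\Sha^1(k,C)$, so your claimed vanishing would force $\Sha^1(k,G)=\ast$ for every reductive $G$, contradicting the theorem (and the known fact $\Zhe(k,S)=\Sha^1(k,S)\neq\ast$ for suitable tori). The correct statement, which the paper extracts from Lemma~\ref{lem:Sha1to2} by comparing the coflasque resolution with a flasque resolution $1\to S\to H\to G\to 1$ (both $\Sha^1(k,G)$ and $\Sha^1(k,C)$ are identified with $\Sha^2(k,S)$), is the isomorphism $\Sha^1(k,C)\cong\Sha^1(k,G)$.

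A second, smaller gap: even once $\Sha^1(k,C)\cong\Sha^1(k,G)$ is in hand, you still need to upgrade the surjection of pointed sets $H^1(k,C)\to\prod_{v\ \mathrm{real}}H^1(k_v,C)$, whose fiber over the base point is $\Sha^1(k,C)$, to a genuine product decomposition. Your ``the isomorphism is a diagram chase'' glosses over this; under your (false) vanishing assumption there is nothing to chase, but in reality one must show all fibers are isomorphic (the paper twists by a cocycle and uses that ${}^{\gamma}C^{\mathrm{tor}}\cong C^{\mathrm{tor}}$) and exhibit a retraction (the paper uses $H^1(k,C)\to H^1(k,C^{\mathrm{tor}})=\Sha^1(k,C^{\mathrm{tor}})\cong\Sha^1(k,C)$, with the middle equality coming from Proposition~\ref{prop:flasqueOfCoflasque} and Lemma~\ref{lem:Sha1to2} again). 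Your appeal to real approximation for $C$ is a workable, if heavier, substitute for the paper's diagram chase through Theorem~\ref{thm:KHC} and $H^1(\R,C^{\mathrm{tor}})=\ast$, but it does not repair either of the two gaps above.
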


Finally, as a further application of the results of this paper, the authors 
in \cite{patho} show how to use nontrivial elements of $\ICP(k,G)$ 
to twist varieties, possessing exceptional objects in their 
derived categories, by $G$-torsors without twisting the exceptional 
objects themselves. 

The structure of the remainder of the paper is as follows.
In Section~\ref{sec:coflasque}, we overview the theory of coflasque
and flasque resolutions, moving from lattices to tori and then treating general
reductive algebraic groups.
In Section~\ref{sec:big_thm}, we prove Theorem~\ref{thm:intro_main_cor} as well as
several other equivalent characterizations of $\Zhe(k,G)$.
Finally, in Section~\ref{sec:applications}, we compute $\Zhe(k,G)$ in
several special cases and establish Theorem~\ref{thm:Zhe_number_field}.
In an Appendix, we prove a generalization of a result of Blinstein and
Merkurjev used to prove the main theorem, which may be of independent
interest.

%%%%%%%%%%%%%%%%%%%%%%%%%%%%%%%%%%%%%%%%%%%%%%%%%%%%%%%%%%%%%%%%
%%%%%%%%%%%%%%%%%%%%%%%%%%%%%%%%%%%%%%%%%%%%%%%%%%%%%%%%%%%%%%%%
% SUBSECTION %
%%%%%%%%%%%%%%%%%%%%%%%%%%%%%%%%%%%%%%%%%%%%%%%%%%%%%%%%%%%%%%%%
%%%%%%%%%%%%%%%%%%%%%%%%%%%%%%%%%%%%%%%%%%%%%%%%%%%%%%%%%%%%%%%%

\subsection*{Acknowledgements}
\label{sec:hat_tip}

The authors would like to thank B.~Antieau, M.~Borovoi,
J.-L.~Colliot-Th\'el\`ene, P.~Gille,
A.~Merkurjev, and anonymous referees for helpful comments. 
Via the first author, this material is based upon work supported by the National
Science Foundation under Grant No.~NSF DMS-1501813.
Via the second author, this work was supported by a grant from the Simons Foundation
(638961, AD).
The third author was partially supported by a USC SPARC grant.
The fourth author was partially supported by an AMS-Simons travel grant.

%%%%%%%%%%%%%%%%%%%%%%%%%%%%%%%%%%%%%%%%%%%%%%%%%%%%%%%%%%%%%%%%
%%%%%%%%%%%%%%%%%%%%%%%%%%%%%%%%%%%%%%%%%%%%%%%%%%%%%%%%%%%%%%%%
% SUBSECTION %
%%%%%%%%%%%%%%%%%%%%%%%%%%%%%%%%%%%%%%%%%%%%%%%%%%%%%%%%%%%%%%%%
%%%%%%%%%%%%%%%%%%%%%%%%%%%%%%%%%%%%%%%%%%%%%%%%%%%%%%%%%%%%%%%%

\subsection*{Notation and Conventions}
\label{sec:notation}
Throughout, $k$ denotes an arbitrary field with separable closure $\overline{k}$.
Let $\Gamma_k$ denote the absolute Galois group $\op{Gal}(\overline{k}/k)$,
which is a profinite group.
A variety is an integral separated scheme of finite type over a field.
A linear algebraic group is a smooth affine group scheme of finite type over $k$.
A reductive group is assumed to be connected.

Let $\pi : \Spec(L) \to \Spec(k)$ be the morphism associated to a
separable field extension $L/k$.
For a $k$-variety $X$, we write
$X_L : = X \times_{\Spec L} \Spec k = \pi^\ast(X)$
and $\overline{X} : = X_{\overline{k}}$.
For an $L$-variety $Y$, we write
$R_{L/k}(Y) := \pi_\ast(Y)$ for the Weil restriction, which is a
$k$-variety.

Let $\GL_n$ denote the general linear group scheme and
$\gm = \Spec(\Z[t^{\pm 1}])=\GL_1$ as the multiplicative group over $\Z$.
We will simply write $\GL_n$ for $ \GL_{n,k}$ or $\gm$ for $\G_{m,k}$
when there is no danger of confusion.
Unless otherwise specified, a $G$-torsor is a \emph{right} $G$-torsor.

We will reference the following categories:
\begin{itemize}
\item $\mathsf{Set}$ is the category of sets.
\item $\mathsf{Set}_\ast$ is the category of pointed sets.
\item $\mathsf{Grp}$ is the category of groups.
\item $\mathsf{Ab}$ is the category of abelian groups.
\item $\mathsf{Lat}$ is the category of finitely-generated free abelian
groups.
\end{itemize}
Given a base field $k$:
\begin{itemize}
\item $k\hyph\mathsf{Alg}$ is the category of associative $k$-algebras.
\item $k\hyph\mathsf{Fld}$ is the category of field extensions of $k$.
\item $k\hyph\mathsf{Grp}$ is the category of algebraic groups over $k$.
\end{itemize}

Given a profinite group $\Gamma$ and a concrete category $\mathsf{C}$
(in other words, $\mathsf{C}$ is equipped with a faithful functor to
category of sets),
we write $\Gamma\hyph\mathsf{C}$ to denote the category of objects
whose underlying sets are endowed with the discrete topology
and a continuous left action of $\Gamma$.
Objects in $\Gamma\hyph\mathsf{Set}$,
$\Gamma\hyph\mathsf{Ab}$,
and $\Gamma\hyph\mathsf{Lat}$
are called $\Gamma$-sets, $\Gamma$-modules, and $\Gamma$-lattices
respectively.

For $\Gamma$-modules $A,B$, we use the shorthand notation
$\Hom_\Gamma(A,B) := \Hom_{\Gamma\hyph\mathsf{Ab}}(A,B)$
and $\Ext^i_\Gamma(A,B) := \Ext^i_{\Gamma\hyph\mathsf{Ab}}(A,B)$.
For linear algebraic groups $A, B$ over $k$ with $B$ commutative,
we denote by $\Ext^1_k(A,B)$ the group of isomorphism classes of
central extensions of algebraic groups
\[
1 \to B \to G \to A \to 1
\]
under the usual Baer sum.

For $k$-algebras $A$ and $B$, we use the shorthand
$\Hom_k(A,B) := \Hom_{k\hyph\mathsf{Alg}}(A,B)$. 
For algebraic groups $A$ and $B$ defined over $k$, we use the shorthand
$\Hom_k(A,B) := \Hom_{k\hyph\mathsf{Grp}}(A,B)$.
For a scheme $X$ and an \'etale sheaf $\mathcal{F}$ on $X$,
we write $H^n(X,\mathcal{F})$ to denote \'etale cohomology.
In particular, we write $\Pic(X)=H^1(X,\gm)$ and $\Br(X)=H^2(X,\gm)$.
For a field $k$, we write
$H^n(k,\mathcal{F}):=H^n(\Spec(k),\mathcal{F})$.
For a profinite group $\Gamma$ and a (continuous) $\Gamma$-set $A$,
we write $H^n(\Gamma,A)$ for the appropriate cohomology set,
assuming this makes sense given $n$ and $A$.

For readers unfamiliar with the Cyrillic letter $\ICP$, ``Zhe'', it 
is pronounced close to the ``s'' in ``treasure''. 

%%%%%%%%%%%%%%%%%%%%%%%%%%%%%%%%%%%%%%%%%%%%%%%%%%%%%%%%%%%%%%%%
%%%%%%%%%%%%%%%%%%%%%%%%%%%%%%%%%%%%%%%%%%%%%%%%%%%%%%%%%%%%%%%%
%%%%%%%%%%%%%%%%%%%%%%%%%%%%%%%%%%%%%%%%%%%%%%%%%%%%%%%%%%%%%%%%
% COFLASQUE RESOLUTIONS %
%%%%%%%%%%%%%%%%%%%%%%%%%%%%%%%%%%%%%%%%%%%%%%%%%%%%%%%%%%%%%%%%
%%%%%%%%%%%%%%%%%%%%%%%%%%%%%%%%%%%%%%%%%%%%%%%%%%%%%%%%%%%%%%%%
%%%%%%%%%%%%%%%%%%%%%%%%%%%%%%%%%%%%%%%%%%%%%%%%%%%%%%%%%%%%%%%%

\section{Coflasque resolutions}
\label{sec:coflasque}

%%%%%%%%%%%%%%%%%%%%%%%%%%%%%%%%%%%%%%%%%%%%%%%%%%%%%%%%%%%%%%%%
%%%%%%%%%%%%%%%%%%%%%%%%%%%%%%%%%%%%%%%%%%%%%%%%%%%%%%%%%%%%%%%%
% SUBSECTION %
%%%%%%%%%%%%%%%%%%%%%%%%%%%%%%%%%%%%%%%%%%%%%%%%%%%%%%%%%%%%%%%%
%%%%%%%%%%%%%%%%%%%%%%%%%%%%%%%%%%%%%%%%%%%%%%%%%%%%%%%%%%%%%%%%

\subsection{Preliminaries on lattices}
\label{sec:lattices}

We recall some facts about $\Gamma$-lattices following \cite{CTS77}, 
see also \cite{Vosky}.

\begin{definition}
Let $\Gamma$ be a profinite group and let $M$ be a $\Gamma$-lattice.
Note that the image of the $\Gamma$-action factors through a finite group $G$
called the \emph{decomposition group}, which acts faithfully on $M$.
\begin{enumerate}
\item $M$ is \emph{permutation} if there is a $\Z$-basis of $M$ permuted
by $\Gamma$.
\item $M$ is \emph{stably permutation} if there exist permutation
lattices $P_1$ and $P_2$ such that $M \oplus P_1 = P_2$.
\item $M$ is \emph{invertible} if it is a direct summand of a permutation lattice.
\item $M$ is \emph{quasi-permutation} if there
exists a short exact sequence
\[
0 \to M \to P_1 \to P_2 \to 0
\]
where $P_1$ and $P_2$ are permutation lattices.
\end{enumerate}
\end{definition}

Given a $\Gamma$-lattice $M$, let $[M]$ denote its similarity class.
In other words, $[M_1]=[M_2]$ if and only if there exist permutation
$\Gamma$-lattices $P_1$ and $P_2$ such that
$M_1 \oplus P_1 \cong M_2 \oplus P_2$.
Observe that the set of similarity classes form a monoid under direct
sum. Being stably permutation amounts to saying that $[M]=[0]$,
while being invertible amounts to saying there exists a lattice $L$ such
that $[M]+[L]=[0]$.

Given a $\Gamma$-lattice $M$, the \emph{dual lattice} 
$M^\vee := \Hom_{\mathsf{Ab}}(M,\Z)$ is the set of group homomorphisms from
$M$ to $\Z$ with the natural $\Gamma$-action
where $\Z$ has the trivial $\Gamma$-action.
Note that this duality induces an exact anti-equivalence of the category
of $\Gamma$-lattices with itself.

\begin{definition}
Let $M$ be a $\Gamma$-lattice.
\begin{enumerate}
\item $M$ is \emph{coflasque} if $H^1(\Gamma',M)=0$ for all
open subgroups $\Gamma' \subseteq \Gamma$.
\item $M$ is \emph{flasque} if $M^\vee$ is coflasque.
\item A \emph{flasque resolution of $M$ of the first type} is an exact sequence
\[
0 \to M \to P \to F \to 0
\]
while a \emph{flasque resolution of $M$ of the second type} is an exact sequence
\[
0 \to P \to F \to M \to 0
\]
where, in each case, $P$ is a permutation lattice and $F$ is a flasque lattice.
\item A \emph{coflasque resolution of $M$ of the first type} is an exact sequence
\[
0 \to C \to P \to M \to 0
\]
while a \emph{coflasque resolution of $M$ of the second type} is an exact sequence
\[
0 \to M \to C \to P \to 0
\]
where, in each case, $P$ is a permutation lattice and $C$ is a coflasque lattice.
\end{enumerate}
\end{definition}

The following alternative characterizations of flasque, coflasque, and
invertible will be useful:

\begin{lemma}
\label{lemma:many_faces_flasque}
Let $\Gamma$ be a profinite group.
\begin{enumerate}
	\item The following are equivalent for a $\Gamma$-module $C$:
		\begin{itemize}
			\item $C$ is coflasque.
			\item $\Ext_\Gamma^1(P,C)=0$ for every
permutation $\Gamma$-lattice $P$.
			\item $\Ext_\Gamma^1(Q,C)=0$ for every
invertible $\Gamma$-lattice $Q$.\\
		\end{itemize}
	\item The following are equivalent for a $\Gamma$-module $F$:
		\begin{itemize}
			\item $F$ is flasque.
			\item $\Ext_\Gamma^1(F,P)=0$
for every permutation $\Gamma$-lattice $P$.
			\item $\Ext_\Gamma^1(F,Q)=0$
for every invertible $\Gamma$-lattice $Q$.\\
		\end{itemize}
	\item The following are equivalence for a $\Gamma$-module $M$:
		\begin{itemize}
			\item $M$ is invertible.
			\item $\Ext_\Gamma^1(M,C)=0$
for every coflasque $\Gamma$-lattice $C$.
			\item $\Ext_\Gamma^1(F,M)=0$
for every flasque $\Gamma$-lattice $F$.
		\end{itemize}	
\end{enumerate}
\end{lemma}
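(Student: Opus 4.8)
The plan is to deduce all three parts from a single source: the long exact sequence in $\Ext^\bullet_\Gamma(-,-)$ applied to short exact sequences of lattices, combined with the following two elementary observations. First, for a permutation lattice $P = \Z[\Gamma/\Gamma']$ one has Shapiro's lemma $\Ext^i_\Gamma(P, N) \cong H^i(\Gamma', N)$ and dually $\Ext^i_\Gamma(N, P) \cong \Ext^i_\Gamma(N, \Z[\Gamma/\Gamma'])$, which via the duality $N \mapsto N^\vee$ (an exact anti-equivalence sending permutation lattices to permutation lattices) becomes $H^i(\Gamma', N^\vee)$. In particular $\Ext^1_\Gamma(P,N) = 0$ for all permutation $P$ is literally the definition of $N$ coflasque, and $\Ext^1_\Gamma(N,P)=0$ for all permutation $P$ is the definition of $N$ flasque. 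This already gives the equivalence of the first bullet with the second bullet in parts (1) and (2); only the passage to \emph{invertible} test lattices in the third bullet, and all of part (3), require work.

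For the implication ``second bullet $\Rightarrow$ third bullet'' in parts (1) and (2): if $Q$ is invertible, write $Q \oplus Q' = P$ with $P$ permutation. Then $\Ext^1_\Gamma(P, C) = \Ext^1_\Gamma(Q,C) \oplus \Ext^1_\Gamma(Q',C)$, so vanishing for all permutation $P$ forces vanishing for all invertible $Q$; the converse is trivial since permutation lattices are invertible. Part (2) is identical with the arguments in the first slot. So parts (1) and (2) are immediate.

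Part (3) is the substantive one. For ``$M$ invertible $\Rightarrow \Ext^1_\Gamma(M,C)=0$ for all coflasque $C$'': again reduce to $M$ a direct summand of a permutation $P$, so it suffices to show $\Ext^1_\Gamma(P,C) = 0$, which is part (1). Symmetrically $\Ext^1_\Gamma(F,M)=0$ for all flasque $F$ reduces to $\Ext^1_\Gamma(F,P)=0$, which is part (2). The real content is the reverse direction. Suppose $\Ext^1_\Gamma(M,C)=0$ for every coflasque $C$ (the flasque case being dual via $M \mapsto M^\vee$, since $\Ext^1_\Gamma(F,M) \cong \Ext^1_\Gamma(M^\vee, F^\vee)$ and $F^\vee$ ranges over coflasque lattices). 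Take a flasque resolution of the first type $0 \to M \to P \to F \to 0$ with $P$ permutation and $F$ flasque (this exists by the standard construction recalled earlier in the section). Dualize: $0 \to F^\vee \to P^\vee \to M^\vee \to 0$ is a surjection onto $M^\vee$ from a permutation lattice with coflasque kernel $C := F^\vee$. Now apply $\Ext^\bullet_\Gamma(-, C)$ to $0 \to M \to P \to F \to 0$; the relevant segment is
\[
\Ext^1_\Gamma(P, C) \to \Ext^1_\Gamma(M, C) \to \Ext^2_\Gamma(F, C) \to \cdots
\]
but this bounds $\Ext^1_\Gamma(M,C)$ from the wrong side. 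The cleaner route is: apply $\Hom_\Gamma(-, \text{--})$ differently — instead use the resolution to see that $\Ext^1_\Gamma(M, C) = 0$ for $C = F^\vee$ means the sequence $0 \to \Hom_\Gamma(F, F^\vee) \to \Hom_\Gamma(P, F^\vee) \to \Hom_\Gamma(M, F^\vee) \to 0$ is exact, so the identity-like element forcing a splitting appears. Concretely: the class of the extension $0 \to M \to P \to F \to 0$ lives in $\Ext^1_\Gamma(F, M)$; I instead want to split $0 \to F^\vee \to P^\vee \to M^\vee \to 0$, whose class lies in $\Ext^1_\Gamma(M^\vee, F^\vee)$. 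By the duality isomorphism this group is isomorphic to $\Ext^1_\Gamma(F, M)$... which is not obviously zero.

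So the correct argument, which I would carry out, is the symmetric one phrased entirely on the $M$ side: start from a \emph{coflasque} resolution of the second type $0 \to M \to C \to P \to 0$ with $C$ coflasque and $P$ permutation (this too exists by the construction recalled in Section~\ref{sec:coflasque}). The extension class lies in $\Ext^1_\Gamma(P, M)$; apply $\Ext^\bullet_\Gamma(-, C')$ for an arbitrary coflasque $C'$... no — apply $\Ext^\bullet_\Gamma(M, -)$ is wrong variance. The honest statement: from $0 \to M \to C \to P \to 0$ and the hypothesis, I claim this splits, exhibiting $M$ as a summand of $C$ — but that shows $M$ is a summand of a coflasque lattice, not of a permutation one. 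To finish one instead takes a \emph{flasque} resolution of $M$ of the \emph{second} type $0 \to P' \to C' \to M \to 0$ with $P'$ permutation... this is getting circular, so let me state the step I will actually use: the key lemma (due to Colliot-Th\'el\`ene–Sansuc, Lemma in \cite{CTS77}) is that $\Ext^1_\Gamma(M, C) = 0$ for all coflasque $C$ together with the existence of a flasque resolution $0 \to M \to P \to F \to 0$ forces $F$ to be invertible, hence $M$ to be invertible as the kernel of a map between a permutation and an invertible lattice — no. I will use: apply $\Ext^1_\Gamma(-, C)$ with $C := F$ the flasque lattice in $0\to M \to P \to F \to 0$, \emph{after} first replacing $F$ by a coflasque lattice via its own resolution, to conclude the sequence $0 \to M \to P \to F \to 0$ splits once we know $\Ext^1_\Gamma(F,M)=0$; and $\Ext^1_\Gamma(F, M) \cong \Ext^1_\Gamma(M^\vee, F^\vee)$ with $F^\vee$ coflasque, which vanishes by the (dual form of the) hypothesis. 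Then $M$ is a direct summand of the permutation lattice $P$, i.e. invertible. \emph{This} is the argument, and the one subtle point — the main obstacle — is correctly invoking the duality $\Ext^1_\Gamma(A,B) \cong \Ext^1_\Gamma(B^\vee, A^\vee)$ to convert the coflasque hypothesis on the $C$-slot into a coflasque hypothesis on the slot where the flasque resolution puts a flasque lattice, and verifying that the hypothesis in part (3) is genuinely two-sided (i.e. that the two displayed conditions are equivalent to each other), which follows from exactly this duality together with the fact that $M$ is invertible iff $M^\vee$ is.
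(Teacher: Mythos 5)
Your reductions for parts (1) and (2) (Shapiro's lemma for $\Ext^1_\Gamma(\Z[\Gamma/\Gamma'],-)$, duality for the second slot, and passing between permutation and invertible test lattices by direct summands), and the easy direction of (3), are all correct; note the paper itself only cites \cite[Lemme 9]{CTS77} and \cite[0.5]{ColSan87Principal} here, so an explicit argument is reasonable. The problem is the step you finally commit to in the substantive direction of (3). You want to split the flasque resolution $0 \to M \to P \to F \to 0$, whose class lies in $\Ext^1_\Gamma(F,M)$, and you assert that $\Ext^1_\Gamma(F,M)\cong\Ext^1_\Gamma(M^\vee,F^\vee)$ vanishes ``by the (dual form of the) hypothesis.'' It does not: the hypothesis $\Ext^1_\Gamma(M,C)=0$ for all coflasque $C$ dualizes to $\Ext^1_\Gamma(F',M^\vee)=0$ for all flasque $F'$ (i.e.\ the third bullet for $M^\vee$), whereas $\Ext^1_\Gamma(M^\vee,F^\vee)=0$ would need the \emph{second} bullet for $M^\vee$, which is not available. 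Your closing remark that the two bullets of (3) are equivalent to each other via duality plus self-duality of invertibility is likewise circular as an input: duality only exchanges the second bullet for $M$ with the third bullet for $M^\vee$; their equivalence for the same lattice is a consequence of the lemma, not a tool for proving it.

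The repair is the standard Colliot-Th\'el\`ene--Sansuc argument you circled but never landed on. Given $\Ext^1_\Gamma(M,C)=0$ for every coflasque $C$, take a coflasque resolution of the \emph{first} type, $0 \to C \to P \to M \to 0$ with $P$ permutation and $C$ coflasque: its class lies exactly in $\Ext^1_\Gamma(M,C)$, hence vanishes, the sequence splits, and $M$ is a direct summand of $P$, i.e.\ invertible. (Alternatively, dualize the hypothesis to the third-bullet condition for $M^\vee$, split a flasque resolution of the first type of $M^\vee$, and use that duality preserves invertibility.) For the other bullet, given $\Ext^1_\Gamma(F,M)=0$ for every flasque $F$, split the flasque resolution of the first type of $M$ directly --- no duality is needed there. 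With these substitutions the argument is complete; your use of the second-type resolution earlier in the sketch was indeed a dead end, since splitting $0 \to M \to C \to P \to 0$ only exhibits $M$ as a summand of a coflasque lattice.
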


\begin{proof}
This is standard.
See, e.g., \cite[Lemme 9]{CTS77} and \cite[0.5]{ColSan87Principal}.
\end{proof}

Flasque/coflasque resolutions of both types always exist
but are never unique;
however, the similarity classes $[F]$ and $[C]$ are well-defined
\cite[Lemma 0.6]{ColSan87Principal}.

It is well known that flasque and coflasque resolutions of the first
type are
``versal'' in the following sense:

\begin{lemma} \label{lem:versal_first_type}
Let $M$ be a $\Gamma$-lattice.
If
\[
0 \to C \to P \xrightarrow{\alpha} M \to 0
\]
is a coflasque resolution of the first type,
then any morphism $P' \to M$ with $P'$ invertible factors through $\alpha$.
Dually, if
\[
0 \to M \xrightarrow{\beta} P \to  F \to 0
\]
is a flasque resolution of the first type,
then any morphism $M \to P'$ with $P'$ invertible factors through $\beta$.
\end{lemma}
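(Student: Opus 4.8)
The plan is to prove the first statement (the coflasque resolution of the first type is versal) and then obtain the flasque statement by dualizing. So suppose
\[
0 \to C \to P \xrightarrow{\alpha} M \to 0
\]
is a coflasque resolution of the first type, and let $f \colon P' \to M$ be any morphism with $P'$ invertible. I want to produce a lift $\tilde{f} \colon P' \to P$ with $\alpha \circ \tilde{f} = f$. Applying $\Hom_\Gamma(P', -)$ to the short exact sequence gives a long exact sequence
\[
\Hom_\Gamma(P', P) \xrightarrow{\alpha_*} \Hom_\Gamma(P', M) \to \Ext^1_\Gamma(P', C),
\]
so it suffices to show that the connecting map sends $f$ to $0$, which certainly holds once $\Ext^1_\Gamma(P', C) = 0$. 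But $P'$ is invertible and $C$ is coflasque, so $\Ext^1_\Gamma(P',C)=0$ is exactly the second bullet of Lemma~\ref{lemma:many_faces_flasque}(a) (the statement that $C$ is coflasque iff $\Ext^1_\Gamma(Q,C)=0$ for every invertible $Q$). Hence $f$ lifts to the desired $\tilde f$, proving the first assertion.

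For the second assertion, let
\[
0 \to M \xrightarrow{\beta} P \to F \to 0
\]
be a flasque resolution of the first type, and let $g \colon M \to P'$ with $P'$ invertible; I want a map $P \to P'$ restricting to $g$ along $\beta$. Here I would apply $\Hom_\Gamma(-, P')$ to the sequence, obtaining
\[
\Hom_\Gamma(P, P') \xrightarrow{\beta^*} \Hom_\Gamma(M, P') \to \Ext^1_\Gamma(F, P'),
\]
so it is enough that $\Ext^1_\Gamma(F, P') = 0$. Since $F$ is flasque and $P'$ is invertible, this is the corresponding bullet of Lemma~\ref{lemma:many_faces_flasque}(b) (the statement that $F$ is flasque iff $\Ext^1_\Gamma(F,Q)=0$ for every invertible $Q$). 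Alternatively, one could deduce this statement formally from the first by applying the duality $M \mapsto M^\vee$, which is an exact anti-equivalence carrying permutation lattices to permutation lattices, invertible to invertible, and flasque to coflasque; under this anti-equivalence a flasque resolution of the first type for $M$ becomes a coflasque resolution of the first type for $M^\vee$, and a map $M \to P'$ becomes a map $(P')^\vee \to M^\vee$ from an invertible lattice, so the first assertion applies and then dualizes back.

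There is no serious obstacle here: the only real content is matching the vanishing of the relevant $\Ext^1$ group to the right bullet of Lemma~\ref{lemma:many_faces_flasque}, and being careful that it is precisely the \emph{first} type of resolution (where the permutation lattice maps onto, resp. injects from, $M$) that makes the connecting map land in a group of the form $\Ext^1_\Gamma(\text{invertible}, \text{coflasque})$ or $\Ext^1_\Gamma(\text{flasque}, \text{invertible})$ — for the second type the relevant group would instead be $\Ext^1_\Gamma(\text{invertible}, \text{something})$ with no coflasqueness available, which is why versality can fail there. The mild bookkeeping point worth stating explicitly is that the duality $(-)^\vee$ is exact and preserves the permutation/invertible/flasque/coflasque dictionary, which was recorded in the discussion preceding the lemma.
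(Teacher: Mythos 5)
Your argument is correct: applying $\Hom_\Gamma(P',-)$, resp.\ $\Hom_\Gamma(-,P')$, to the given resolution and invoking the vanishing of $\Ext^1_\Gamma(\text{invertible},\text{coflasque})$, resp.\ $\Ext^1_\Gamma(\text{flasque},\text{invertible})$, from Lemma~\ref{lemma:many_faces_flasque} is exactly the standard proof; the paper itself does not reprove this but simply cites \cite[Lemma 1.4]{CTS77}, where essentially this argument appears, and your duality reduction of the second statement to the first is also valid since $(-)^\vee$ is exact and exchanges flasque resolutions of the first type for $M$ with coflasque resolutions of the first type for $M^\vee$. One cosmetic slip: the $\Ext^1_\Gamma(Q,C)=0$ criterion for invertible $Q$ is the \emph{third} bullet of Lemma~\ref{lemma:many_faces_flasque}(a), not the second, though your parenthetical makes clear which statement you mean.
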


\begin{proof}
See \cite[Lemma 1.4]{CTS77}.
\end{proof}

Less well known is that resolutions of the second
type also satisfy a ``versality'' property.

\begin{lemma} \label{lem:versal_second_type}
Let $M$ be a $\Gamma$-lattice.
Suppose
\[
0 \to M \xrightarrow{\alpha} C \to P \to 0
\]
is a coflasque resolution of the second type and
\[
0 \to M \xrightarrow{\gamma} N \to Q \to 0
\]
is an extension of $\Gamma$-lattices with $Q$ invertible.
Then there is a morphism $\phi:N \to C$ such that
$\phi \circ \gamma = \alpha$.

Let $M$ be a $\Gamma$-lattice.
Suppose
\[
0 \to P \to F \xrightarrow{\alpha} M \to 0
\]
is a flasque resolution of the second type and
\[
0 \to Q \to N \xrightarrow{\gamma} M \to 0
\]
is an extension of $\Gamma$-lattices with $Q$ invertible.
Then there is a morphism $\phi:F \to N$ such that
$\gamma \circ \phi = \alpha$.
\end{lemma}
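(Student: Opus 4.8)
The plan is to prove both halves by a cohomological obstruction argument, exploiting the duality between flasque and coflasque lattices and the alternative characterizations in Lemma~\ref{lemma:many_faces_flasque}. Since the two statements are dual to one another under $M \mapsto M^\vee$ (which swaps flasque and coflasque, swaps the two types of resolution, and reverses arrows), it suffices to prove the first and then dualize; I would say this explicitly and then focus on the coflasque case.

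\medskip

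For the first statement, I start with the coflasque resolution of the second type $0 \to M \xrightarrow{\alpha} C \to P \to 0$ and the given extension $0 \to M \xrightarrow{\gamma} N \to Q \to 0$ with $Q$ invertible. The goal is to produce $\phi \colon N \to C$ with $\phi\circ\gamma = \alpha$. The natural move is to push out the extension $0 \to M \to N \to Q \to 0$ along $\alpha \colon M \to C$, obtaining a commutative diagram with a new extension $0 \to C \to E \to Q \to 0$ and a map $N \to E$ restricting to $\alpha$ on $M$. If this pushout extension splits, then composing $N \to E$ with a splitting $E \to C$ gives the desired $\phi$. So the crux is: \emph{the pushout extension $0 \to C \to E \to Q \to 0$ splits}, i.e.\ its class in $\Ext^1_\Gamma(Q,C)$ vanishes. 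But $C$ is coflasque and $Q$ is invertible, so $\Ext^1_\Gamma(Q,C) = 0$ by Lemma~\ref{lemma:many_faces_flasque}(a). That is precisely the point.

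\medskip

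So the key steps, in order, are: (1) reduce the second (flasque) statement to the first (coflasque) by applying the duality functor $(-)^\vee$, checking it converts the hypotheses and conclusion correctly; (2) form the pushout of the extension defining $N$ along $\alpha$, recording the resulting map $\psi \colon N \to E$ with $\psi\circ\gamma$ equal to the composite $M \to C \to E$; (3) invoke $\Ext^1_\Gamma(Q,C)=0$ (Lemma~\ref{lemma:many_faces_flasque}, using $C$ coflasque, $Q$ invertible) to split $0 \to C \to E \to Q \to 0$ via some retraction $\rho \colon E \to C$; (4) set $\phi := \rho \circ \psi$ and verify $\phi \circ \gamma = \rho \circ (\text{inclusion } C \hookrightarrow E) \circ \alpha = \alpha$, where the last equality uses that $\rho$ is a retraction of the inclusion.

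\medskip

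The main obstacle is not really difficult but is the place requiring care: confirming that the pushout square behaves as claimed, namely that the induced map $N \to E$ genuinely restricts to the canonical inclusion $C \hookrightarrow E$ precomposed with $\alpha$ on the copy of $M$, so that after applying the retraction $\rho$ one recovers $\alpha$ exactly (not merely up to something). This is a standard diagram chase with pushouts of short exact sequences, and a secondary bookkeeping point is making sure the duality in step (1) matches ``second type'' with ``second type'' — one should note $0 \to M \to C \to P \to 0$ dualizes to $0 \to P^\vee \to C^\vee \to M^\vee \to 0$, which is indeed a flasque resolution of the second type of $M^\vee$, and an extension $0 \to M \to N \to Q \to 0$ with $Q$ invertible dualizes to $0 \to Q^\vee \to N^\vee \to M^\vee \to 0$ with $Q^\vee$ invertible, as required.
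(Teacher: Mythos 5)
Your proof is correct and rests on exactly the same key input as the paper's, namely the vanishing $\Ext^1_\Gamma(Q,C)=0$ for $Q$ invertible and $C$ coflasque from Lemma~\ref{lemma:many_faces_flasque}, followed by duality for the flasque half. The only (cosmetic) difference is that you push out the extension $0 \to M \to N \to Q \to 0$ along $\alpha$ and split the result, whereas the paper applies $\Hom_\Gamma(Q,-)$ to the coflasque resolution, lifts the class of $N$ through the surjection $\Hom_\Gamma(Q,P) \to \Ext^1_\Gamma(Q,M)$, and identifies $N$ with a fiber product $Q \oplus_P C$; both are standard manipulations of the same vanishing statement.
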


\begin{proof}
From Lemma~\ref{lemma:many_faces_flasque}, an equivalent condition that $C$ is coflasque is that
$\Ext^1_{\Gamma}(Q,C)=0$ for all invertible modules $Q$.
Thus from the exact sequence
\[
\Hom_\Gamma(Q,P) \to \Ext^1_{\Gamma}(Q,M) \to \Ext^1_{\Gamma}(Q,C) = 0
\]
there exists some map $\beta: Q \to P$ such that the extension 
\begin{displaymath}
  0 \to M \to Q \oplus_P C \to Q \to 0
\end{displaymath}
is isomorphic to 
\begin{displaymath}
  0 \to M \to N \to Q \to 0.
\end{displaymath}
The desired homomorphism $\phi: N \to C$ is the composition 
\begin{displaymath}
  N \cong Q \oplus_P C \to C. 
\end{displaymath}

The result for flasque resolutions follows by duality.
\end{proof}

%%%%%%%%%%%%%%%%%%%%%%%%%%%%%%%%%%%%%%%%%%%%%%%%%%%%%%%%%%%%%%%%
%%%%%%%%%%%%%%%%%%%%%%%%%%%%%%%%%%%%%%%%%%%%%%%%%%%%%%%%%%%%%%%%
% SUBSECTION %
%%%%%%%%%%%%%%%%%%%%%%%%%%%%%%%%%%%%%%%%%%%%%%%%%%%%%%%%%%%%%%%%
%%%%%%%%%%%%%%%%%%%%%%%%%%%%%%%%%%%%%%%%%%%%%%%%%%%%%%%%%%%%%%%%

\subsection{Preliminaries on algebraic tori}
\label{sec:tori}

\begin{definition}\label{defn:torus}
A $k$-\emph{torus} is an algebraic group $T$ over $k$ such that
$T _{\overline{k}} \cong \G_{m,\overline{k}} ^n$
for some non-negative integer $n$.
A torus is \emph{split} if $T \cong \G_{m,k}^n$.  A field extension
$L/k$ satisfying $T_L \cong \G_{m,L} ^n$ is called a \emph{splitting field}
of the torus $T$.
Any torus admits a finite Galois splitting field. 
\end{definition}

Recall that there is an anti-equivalence of categories between
$\Gamma_k$-lattices and $k$-tori, which we will call \emph{Cartier duality}
(see, e.g., \cite{Vosky}).
Given a torus $T$, the Cartier dual (or \emph{character lattice}) $\widehat{T}$ is the
$\Gamma$-lattice $\op{Hom}_{\bar{k}}(\overline{T},
\mathbb{G}_{m,\bar{k}})$.
Given a $\Gamma_k$-lattice $M$, we use $\cd M$ to denote the
Cartier dual torus.

\begin{definition}
Let $T$ be a torus with corresponding $\Gamma_k$-lattice $M := \widehat{T}$.
\begin{enumerate}
\item $T$ is \emph{quasi-trivial} if $M$ is permutation.
\item $T$ is \emph{flasque} if $M$ is flasque.
\item $T$ is \emph{coflasque} if $M$ is coflasque.
\end{enumerate}
Similarly, we may define flasque/coflasque resolutions
of both types via Cartier duality.
\end{definition}

\begin{proposition} \label{prop:invertible_H1_vanish}
A torus $T$ is special if and only if $\widehat{T}$ is an invertible
$\Gamma_k$-lattice.
\end{proposition}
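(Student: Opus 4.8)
The plan is to prove both directions by reducing to the vanishing of $H^1$ of invertible lattices, using the known behaviour of $H^1$ under the operations that build invertible lattices out of permutation lattices.

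First I would recall the basic input: for a quasi-trivial torus $Q$ with $\widehat{Q}$ permutation, $H^1(K, Q_K) = 0$ for every field extension $K/k$. This is Hilbert 90 together with Shapiro's lemma, since a permutation lattice is a direct sum of $\Z[\Gamma_k/\Gamma_L]$'s and the corresponding torus is a product of Weil restrictions $R_{L/k}(\gm)$; thus quasi-trivial tori are special. For the ``if'' direction, suppose $M = \widehat{T}$ is invertible, so there is a lattice $L$ and a permutation lattice $P$ with $M \oplus L \cong P$. Dualizing via Cartier duality, this says $T \times \cd{L} \cong \cd{P}$ with $\cd{P}$ quasi-trivial. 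Then for any field extension $K/k$, the sequence (really just the product decomposition) gives $H^1(K,\cd{P}_K) = H^1(K,T_K) \times H^1(K,\cd{L}_K)$, and since the left side vanishes, $H^1(K,T_K) = 0$. Hence $T$ is special.

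For the ``only if'' direction, suppose $T$ is special. I would use a flasque resolution of $T$ of the first type: an exact sequence $1 \to T \to P \to S \to 1$ of tori where $P$ is quasi-trivial and $S$ is flasque (obtained by Cartier-dualizing a flasque resolution of the second type $0 \to \widehat{S} \to \widehat{P} \to M \to 0$; note $M$ flasque/coflasque terminology must be matched carefully — I want the resolution of $M$ dual to $0 \to C \to P \to M \to 0$). Concretely, take a surjection $\widehat{P} \twoheadrightarrow M$ from a permutation lattice with flasque kernel $F = \widehat{S}$; Cartier dualizing yields $1 \to T \to S' \to P' \to 1$ — here I need to be slightly careful about which term is quasi-trivial, but the upshot is a short exact sequence of tori $1 \to T \to E \to Q \to 1$ with $Q$ quasi-trivial and $E$ flasque. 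Taking Galois cohomology over any $K$ and using $H^1(K,Q_K) = 0$ plus the hypothesis $H^1(K,T_K) = 0$, I extract the relevant vanishing; iterating or combining with the dual resolution should pin down that the similarity class $[M]$ is invertible. The cleanest route: use that $T$ special implies $H^1(K, T_K)=0$ for all $K$, apply this to the resolution to get that the connecting map $Q(K) \to H^1(K,T_K)=0$ business forces the flasque torus $E$ to itself be special, and a flasque special torus is quasi-trivial (equivalently a flasque invertible lattice is permutation, by Lemma~\ref{lemma:many_faces_flasque}), whence $M$ is a quotient of a permutation by a permutation and one checks invertibility — alternatively invoke the standard fact that $[F]=0$ forces $M$ invertible.

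The main obstacle I anticipate is bookkeeping: matching up the four variants of resolutions (flasque/coflasque, first/second type) with the correct direction of Cartier duality so that the quasi-trivial torus ends up in the surjective slot where Hilbert 90 can be applied, and then correctly deducing ``$E$ special $\Rightarrow$ $\widehat{E}$ permutation'' rather than merely invertible. The honest way to close this gap is to cite the characterization that a flasque lattice which is invertible is stably permutation (hence its torus is stably quasi-trivial, so special), combined with Lemma~\ref{lemma:many_faces_flasque}(3): $M$ is invertible iff $\Ext^1_\Gamma(M,C)=0$ for all coflasque $C$ — and one translates the vanishing of $H^1$ over all extensions into exactly such an $\Ext^1$ statement via the interpretation of $H^1(K,T_K)$ in terms of $\Ext$ groups of the character lattices. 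So the real content is the dictionary between special-ness (a cohomological vanishing over all fields) and invertibility (a categorical splitting condition on the lattice), and I expect that dictionary — likely already implicit in \cite{CTS77} — to be where the proof spends its effort.
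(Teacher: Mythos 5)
Your ``if'' direction is correct and complete: if $\widehat{T}\oplus L\cong P$ with $P$ permutation, then $T\times \cd{L}$ is quasi-trivial, and Hilbert 90 plus Shapiro kill $H^1(K,T_K)$ for every $K/k$. The paper itself does not argue either direction: it simply cites Colliot-Th\'el\`ene's classification of special tori (Theorem~13 of Huruguen's paper), so the entire content of the proposition is outsourced to that reference.

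The ``only if'' direction of your proposal has a genuine gap. Your reduction up to the point ``the flasque torus $E$ in $1\to T\to E\to Q\to 1$ is special'' is fine, but the step you lean on to finish --- ``a flasque special torus is quasi-trivial (equivalently a flasque invertible lattice is permutation, by Lemma~\ref{lemma:many_faces_flasque})'' --- is both circular and unsupported. It is circular because passing from ``$E$ special'' to ``$\widehat{E}$ invertible (or permutation)'' is precisely an instance of the statement being proved; and Lemma~\ref{lemma:many_faces_flasque} says nothing like ``flasque and invertible implies permutation'' --- indeed invertible flasque classes are exactly what separate retract rationality from stable rationality in Theorem~\ref{theorem:torus_rationality}, so they cannot be dismissed as (stably) permutation. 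Your fallback --- translate ``$H^1(K,T_K)=0$ for all $K$'' into ``$\Ext^1_{\Gamma_k}(\widehat{T},C)=0$ for all coflasque $C$'' and invoke Lemma~\ref{lemma:many_faces_flasque}(3) --- is the right target, but that translation is the entire difficulty: $H^1(K,T_K)$ has coefficients in units of $\overline{K}$, not in lattices, and one must manufacture a specific extension $K$ (function fields of torsors or of quasi-trivial tori, with controlled units and Picard groups, i.e.\ a versal-torsor-type argument) before any $\Ext$ group of $\Gamma_k$-lattices embeds into a Galois $H^1$ of $T$. None of this is supplied, and it does not follow formally from the resolutions you set up; it is exactly the content of the classification the paper cites. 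So the proposal proves one implication and only gestures at the other.
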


\begin{proof}
This follows from the classification of
special tori due to Colliot-Th\'el\`ene
\cite[Theorem 13]{Huruguen}.
\end{proof}

As in the introduction, a \emph{separable algebra} $A$ over $k$ is a
finite direct sum of finite-dimensional matrix algebras
over finite-dimensional division $k$-algebras whose centers are separable
field extensions over $k$.
Given a separable algebra $A$ over $k$, we recall that $\GL_1(A)$ is the
group scheme of units of $A$, i.e., 
\[
\GL_1(A)(R) := (A \otimes_k R)^\times 
\]
for any commutative $k$-algebra $R$.

An \'etale algebra over $k$ of degree $n$ is a commutative separable
algebra over $k$ of dimension $n$.
In other words, $E = F_1 \times \cdots \times F_r$
where $F_1, \ldots, F_r$ are separable field extensions of $k$.
There is an antiequivalence between finite $\Gamma_k$-sets $\Omega$ and
\'etale algebras $E$ via
\[
\Omega = \Hom_{k\hyph\mathsf{Alg}}(E,\bar{k})
\textrm{ and } E = \Hom_{\Gamma_k\hyph\mathsf{Set}}(\Omega,\bar{k})
\]
with the natural $\Gamma_k$-action and $k$-algebra structure
on $\bar{k}$ (see, e.g., \cite[\S{18}]{BOI}).

\begin{proposition} \label{prop:hilbert90_shapiro}
Let $E = F_1 \times \cdots \times F_r$
be an \'etale algebra over $k$ of degree $n$,
where $F_1, \ldots, F_r$ are separable field extensions of $k$.
Let $T = R_{E/k} \gm$ be the Weil restriction
and let $\Omega := \Hom_k(E,\bar{k})$ be the corresponding $\Gamma$-set.
\begin{enumerate}
\item $T(k) = E^\times$.
\item $\widehat{T}$ is a permutation $\Gamma_k$-lattice with a canonical basis
isomorphic to $\Omega$.
\item $H^1(k,T) = 1$.
\item $H^2(k,T) = \prod_{i=1}^r \Br(F_i)$.
\end{enumerate}
\end{proposition}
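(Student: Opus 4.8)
The plan is to establish all four parts by reducing everything to the Weil restriction of $\gm$ along the separable field extensions $F_i/k$, then applying Shapiro's lemma and Hilbert's Theorem 90. First I would record the basic structural fact that Weil restriction commutes with products, so $T = R_{E/k}\gm = \prod_{i=1}^r R_{F_i/k}\gm$; this immediately reduces every cohomological claim to the case of a single separable field extension $F/k$, where $R_{F/k}\gm$ has character lattice the permutation $\Gamma_k$-lattice $\Z[\Gamma_k/\Gamma_F]$ on the coset space $\Hom_k(F,\bar k)$. Assembling these over $i$ gives part (b): $\widehat T = \bigoplus_i \Z[\Omega_i]$ where $\Omega = \coprod_i \Omega_i$, with canonical basis $\Omega$, which is visibly a permutation lattice. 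For part (a), I would use the standard adjunction/base-change identity $R_{F/k}\gm(k) = \gm(F) = F^\times$, and take products to get $T(k) = E^\times$.

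For parts (c) and (d), the key input is Shapiro's lemma for Weil restriction: for a smooth commutative $F$-group scheme $H$ (here $H = \gm$) one has a canonical isomorphism $H^n(k, R_{F/k}H) \cong H^n(F, H)$ for all $n$, since $R_{F/k}$ corresponds on the Galois side to coinduction $\Z[\Gamma_k] \otimes_{\Z[\Gamma_F]} (-)$ and coinduced modules are acyclic in the relevant sense. Applying this with $n=1$ and $H = \gm$ gives $H^1(k, R_{F/k}\gm) \cong H^1(F,\gm) = \Pic(\Spec F) = 1$ by Hilbert's Theorem 90; taking the product over $i$ yields $H^1(k,T) = \prod_i H^1(F_i,\gm) = 1$, which is part (c). Applying it with $n=2$ gives $H^2(k, R_{F/k}\gm) \cong H^2(F,\gm) = \Br(F)$ by definition of the Brauer group, and taking the product over $i$ gives $H^2(k,T) = \prod_{i=1}^r \Br(F_i)$, which is part (d).

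The main obstacle — really the only nonformal point — is justifying Shapiro's lemma in the form needed, i.e. for étale (Galois) cohomology of a Weil-restricted group scheme rather than for abstract cohomology of discrete Galois modules. One has to check that the identification $R_{F/k}\gm(\bar k) \cong \operatorname{Coind}_{\Gamma_F}^{\Gamma_k} \gm(\bar k)$ as $\Gamma_k$-modules is compatible with the étale/Galois cohomology machinery, so that the Faddeev–Shapiro isomorphism in group cohomology transports to the isomorphism $H^n(k, R_{F/k}\gm) \cong H^n(F,\gm)$. This is entirely standard (it appears, e.g., in the treatment of Weil restriction in \cite{BOI} or in Colliot-Th\'el\`ene--Sansuc), so I would simply cite it; the remaining steps are the formal reductions above together with Hilbert 90 and the definition of $\Br$. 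I would also note that part (b) can be phrased as saying $\widehat T$ is the permutation lattice $\Z[\Omega]$, consistent with Cartier duality sending quasi-trivial tori to permutation lattices, which cross-checks part (c) against Proposition~\ref{prop:hilbert90_shapiro}'s own statement and the general principle that quasi-trivial tori have vanishing $H^1$.
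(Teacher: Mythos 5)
Your proposal is correct and is essentially the paper's own argument: the paper proves this proposition simply by declaring all four parts "standard consequences of Hilbert's Theorem 90 and Shapiro's Lemma," which is exactly the reduction you carry out in detail (Weil restriction as a product over the $F_i$, the character lattice $\Z[\Omega]$, and the Faddeev--Shapiro identification of $H^n(k,R_{F_i/k}\gm)$ with $H^n(F_i,\gm)$). The only cosmetic point is that your formula $\Z[\Gamma_k]\otimes_{\Z[\Gamma_F]}(-)$ is induction rather than coinduction, but since $\Gamma_F$ has finite index in $\Gamma_k$ the two agree and the argument is unaffected.
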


\begin{proof}
These are standard consequences of Hilbert's Theorem 90 and
Shapiro's Lemma.
\end{proof}

Let us now recall some relevant rationality properties.

\begin{definition}\label{defn:rationality} A $k$-variety $X$ is \emph{rational} if $X$ is birationally
equivalent to $\A^n_k$ for some $n \ge 0$.
We say $X$ is \emph{stably rational} if $X \times \A^n_k$ is birational to
$\A^m_k$ for some $n,m \ge 0$.
We say $X$ is \emph{retract rational} if there is a dominant rational
map $f : \A^n_k \dasharrow X$ that has a rational section
$s : X \dasharrow \A^n_k$ such that $f \circ s$ is the identity on $X$. 
\end{definition}

A complete characterization of rationality of tori is still an open
problem (it is not known if all stably rational tori are
rational).
However, stable rationality and retract rationality of a torus is
completely understood via its flasque resolutions.

\begin{theorem} \label{theorem:torus_rationality}
Let $T$ be a $k$-torus and
\[
1 \to F \to P \to T \to 1
\]
a flasque resolution of the first type.
\begin{itemize}
\item $T$ is stably rational if and only if $\widehat{F}$ is stably
permutation.
\item $T$ is retract rational if and only if $\widehat{F}$ is invertible.
\end{itemize}
\end{theorem}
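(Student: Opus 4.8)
The plan is to match the two rationality properties of $T$ with the two algebraic properties of the similarity class $[\widehat{F}]$, which is a well-defined invariant of $T$ independent of the chosen flasque resolution of the first type. The ``if'' directions are quick. Since $P$ is quasi-trivial it is an open subvariety of affine space, hence rational, and $P \to T$ is an $F$-torsor. If $\widehat{F}$ is stably permutation, choose quasi-trivial tori with $F \times Q_1 \cong Q_2$; multiplying the resolution by $Q_1$ and using $F \times Q_1 \cong Q_2$ gives an exact sequence $1 \to Q_2 \to P \times Q_1 \to T \to 1$. Torsors under quasi-trivial tori are Zariski-locally trivial (Hilbert 90 and Shapiro's lemma, cf.\ Proposition~\ref{prop:hilbert90_shapiro}), so $P \times Q_1$ is birationally equivalent to $T \times Q_2$; as $P \times Q_1$ and $Q_2$ are rational, $T$ is stably rational. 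If $\widehat{F}$ is only invertible, write $F \times F' \cong Q$ with $Q$ quasi-trivial; the same computation shows $P \times F'$ is birationally equivalent to $T \times Q$, while $P \times F'$ is a retract of the rational variety $P \times F' \times F \cong P \times Q$ via the identity section of $F$, hence retract rational; therefore $T \times Q$, and so $T$, is retract rational.

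For the ``only if'' directions the content is that $[\widehat{F}]$ depends only on the stable birational class of $T$. I would make this visible by passing to a smooth projective equivariant compactification $\overline{T}$ of $T$, which exists in any characteristic (a smooth toric variety attached to the $\Gamma_k$-lattice $\widehat{T}$): after Cartier duality, a flasque resolution of $T$ of the first type is the same as one of $\widehat{T}$, and the exact sequence of $\Gamma_k$-modules $0 \to \widehat{T} \to \op{Div}_\infty(\overline{T}) \to \Pic(\overline{T}) \to 0$ --- with $\op{Div}_\infty$ the permutation module on the boundary divisors and $\Pic(\overline{T})$ flasque --- is such a resolution, so $[\widehat{F}] = [\Pic(\overline{T})]$. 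Since $\Pic$ of a smooth projective variety is a stable birational invariant modulo permutation summands and $\Pic(\PP^n) = \Z$, a stably rational $T$ forces $\Pic(\overline{T})$, hence $\widehat{F}$, to be stably permutation. For retract rational $T$ one wants $\widehat{F}$ invertible instead; by Proposition~\ref{prop:invertible_H1_vanish} applied to $F$ this is equivalent to $F$ being special, that is $H^1(K, F) = 1$ for all $K/k$, which one extracts from the retraction properties of the $F$-torsor $P \to T$ along a rational retraction $\A^n \dasharrow T$, together with the characterization of invertible lattices in Lemma~\ref{lemma:many_faces_flasque}(c).

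The difficulty is concentrated in the ``only if'' directions. The crucial input --- that $\Pic$ of a smooth projective model is a stable birational invariant modulo permutation modules --- is not available from resolution of singularities and weak factorization in positive characteristic, so it must be run through the torsor-theoretic arguments of Colliot-Th\'el\`ene and Sansuc: one compares the flasque torsor of $T$ with that of a split torus along a birational map, invoking the versality of resolutions of both types (Lemmas~\ref{lem:versal_first_type} and~\ref{lem:versal_second_type}). The passage from ``stably permutation'' to ``invertible'' in the retract rational case is the remaining subtle point, amounting to Saltman's characterization of retract rational function fields.
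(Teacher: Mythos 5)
Your ``if'' halves are fine: multiplying the resolution by a complementary quasi-trivial (resp.\ invertible) factor, using that torsors under quasi-trivial tori are Zariski-locally trivial, and noting that quasi-trivial tori are $k$-rational is exactly the standard argument, and the bookkeeping with retracts at the end is correct. But the two ``only if'' implications --- which are the actual content of the theorem, and which the paper itself does not reprove but simply quotes as \cite[Theorem 2]{VoskyStable} and \cite[Theorem 3.14]{SaltmanRR} --- are not established by your sketch. For stable rationality you reduce to the claim that the class $[\Pic(\overline{T}_{\overline{k}})]$ of a smooth projective model modulo permutation lattices is a stable birational invariant; you correctly observe that this cannot be extracted from resolution of singularities or weak factorization in positive characteristic (where the theorem is asserted for arbitrary $k$), and then say it ``must be run through the torsor-theoretic arguments of Colliot-Th\'el\`ene and Sansuc'' --- but that argument is never carried out, so the implication ``stably rational $\Rightarrow$ $\widehat{F}$ stably permutation'' is only asserted, not proved.

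The retract-rational direction has the same problem in sharper form. Reducing ``$\widehat{F}$ invertible'' to ``$F$ special'' via Proposition~\ref{prop:invertible_H1_vanish} is legitimate, but the claim that speciality of $F$ can be ``extracted from the retraction properties of the $F$-torsor $P \to T$ along a rational retraction $\A^n \dasharrow T$'' is not an argument: the necessary ingredients --- versality of $P \to T$ (every class in $H^1(K,F)$ is a specialization at a $K$-point of a dense open of $T$, with the finite-field case handled separately), the extension/constancy properties of torsors under flasque tori over open subsets of affine space, and the final lattice-theoretic step that upgrades the resulting factorization to invertibility of $\widehat{F}$ --- are nowhere supplied, and you concede the point by saying it ``amounts to Saltman's characterization of retract rational function fields,'' which is precisely the statement to be proved. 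So as written the proposal proves only the easy implications and, where the theorem has substance, reproduces in prose the citations that constitute the paper's proof.
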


\begin{proof}
The first item is \cite[Theorem 2]{VoskyStable}.
The second is \cite[Theorem 3.14]{SaltmanRR}.
\end{proof}

%%%%%%%%%%%%%%%%%%%%%%%%%%%%%%%%%%%%%%%%%%%%%%%%%%%%%%%%%%%%%%%%
%%%%%%%%%%%%%%%%%%%%%%%%%%%%%%%%%%%%%%%%%%%%%%%%%%%%%%%%%%%%%%%%
% SUBSECTION %
%%%%%%%%%%%%%%%%%%%%%%%%%%%%%%%%%%%%%%%%%%%%%%%%%%%%%%%%%%%%%%%%
%%%%%%%%%%%%%%%%%%%%%%%%%%%%%%%%%%%%%%%%%%%%%%%%%%%%%%%%%%%%%%%%

\subsection{Flasque and coflasque resolutions of algebraic groups}
\label{sec:FC_alg_grp}

We recall how one can define flasque and coflasque resolutions for
more linear algebraic groups following \cite{CT2008}.

Let $G$ be a (connected) reductive algebraic group over a field $k$.
Note that since our main application will be understanding the first
Galois cohomology set of $G$, in characteristic $0$ the reductive
hypothesis is largely harmless.
Let $G^{ss}$ be the derived subgroup of $G$, which is semisimple,
and let $G^{tor}$ be the quotient $G/G^{ss}$, which is a torus.

\begin{definition}
Let $G$ be a reductive algebraic group.
\begin{itemize}
\item
The group $G$ is \emph{quasi-trivial} if $G^{tor}$ is a quasi-trivial
torus and $G^{ss}$ is simply-connected.
\item
The group $G$ is \emph{coflasque} if $G^{tor}$ is a coflasque
torus and $G^{ss}$ is simply-connected.
\item
A \emph{flasque resolution} of $G$ is a short exact sequence
\[
1 \to S \to H \to G \to 1
\]
where $S$ is a flasque torus and $H$ is quasi-trivial.
\item
A \emph{coflasque resolution} of $G$ is a short
exact sequence
\[
1 \to P \to C \to G \to 1
\]
where $P$ is a quasi-trivial torus and $C$ is coflasque.
\end{itemize}
\end{definition}

The group extensions in a flasque or coflasque resolution are
automatically central
since $G$ cannot act non-trivially on a torus.
Indeed, the automorphism group scheme of a torus $T$ has trivial
connected component and we assume the group $G$ is connected

Unlike the situation for $\Gamma$-lattices and tori,
the symmetry between flasque and coflasque is now broken.
In the case where $G$ is a torus, the resolutions above specialize to
flasque resolutions of the first type and coflasque resolutions of the
second type.
In this context, we do not refer to the ``type'' of a
flasque or coflasque resolution.
However, as for tori, the flasque and coflasque resolutions defined
above always exist:

\begin{theorem}[Colliot-Th\'el\`ene] \label{thm:CT_collected}
 Let $G$ be a reductive algebraic group over $k$. Then there exists both a flasque resolution and coflasque resolution of $G$. Moreover, for any two coflasque resolutions
 \begin{gather*}
  1 \to P_1 \to C_1 \to G \to 1 \\
  1 \to P_2 \to C_2 \to G \to 1
 \end{gather*}
 there is an isomorphism
 \begin{displaymath}
   P_1 \times C_2 \cong P_2 \times C_1.
 \end{displaymath}
\end{theorem}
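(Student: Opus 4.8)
The plan is to prove existence and the stated ``cancellation'' property separately, using Colliot-Th\'el\`ene's results on reductive groups together with the lattice-theoretic input already recorded in this section.

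\textbf{Existence.} First I would reduce to the torus case. Write $G^{ss}$ for the derived subgroup and $\pi : \widetilde{G^{ss}} \to G^{ss}$ for the simply connected cover, with kernel the finite central $\mu$. The obstruction to building a coflasque resolution is concentrated in $G^{tor}$: choose a coflasque resolution of the second type $1 \to G^{tor} \to C_0 \to P_0 \to 1$ for the torus $G^{tor}$ (these exist by Cartier duality from the lattice statement), dualize appropriately, and then pull back along $G \to G^{tor}$ to get a central extension of $G$ by a quasi-trivial torus whose ``toric part'' is coflasque; one must also arrange that the semisimple part becomes simply connected, which is handled by incorporating $\widetilde{G^{ss}}$ and absorbing $\mu$ into the quasi-trivial torus using a flasque/coflasque resolution of $\mu$ viewed through an embedding $\mu \hookrightarrow$ (quasi-trivial torus). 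This is exactly the construction in \cite{CT2008}, so I would simply cite it rather than redo it; the only claim needing care is that the torus $P$ produced is genuinely quasi-trivial and that $C$ satisfies both defining conditions (coflasque toric quotient, simply connected semisimple part).

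\textbf{Cancellation.} Given two coflasque resolutions $1 \to P_i \to C_i \to G \to 1$, form the fibre product $C_1 \times_G C_2$. This sits in two exact sequences
\[
1 \to P_2 \to C_1 \times_G C_2 \to C_1 \to 1, \qquad 1 \to P_1 \to C_1 \times_G C_2 \to C_2 \to 1,
\]
both central. So it suffices to show each such extension splits, i.e. $C_1 \times_G C_2 \cong P_2 \times C_1$ and $\cong P_1 \times C_2$, from which $P_2 \times C_1 \cong P_1 \times C_2$ follows. The extension $1 \to P_2 \to E \to C_1 \to 1$ is classified by an element of $\Ext^1_k(C_1, P_2)$; since the extension is central and $C_1$ is connected reductive, this $\Ext$ group is computed (after passing to $\overline{k}$ and taking Galois invariants/cohomology) in terms of $\Ext^1$ and $\Hom$ groups of the character lattices $\widehat{P_2}$, $\widehat{C_1^{tor}}$ together with a contribution from the fundamental group of $C_1^{ss}$. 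The key point is that $\widehat{P_2}$ is a permutation lattice while the relevant invariants of $C_1$ are coflasque (toric part) or simply connected (so no $\mu$-contribution), and $\Ext^1_\Gamma(\text{coflasque}, \text{permutation})$ need not vanish --- so instead one uses that $\widehat{P_2}$ being \emph{permutation} makes $\Ext^1$ \emph{from} a coflasque module \emph{into} it vanish by Lemma~\ref{lemma:many_faces_flasque}(1): for coflasque $C$ one has $\Ext^1_\Gamma(P, C) = 0$ for permutation $P$; here I need the dual statement, $\Ext^1_\Gamma(C, P)=0$ for $C$ \emph{flasque}, which is Lemma~\ref{lemma:many_faces_flasque}(2). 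So in fact I must check the character lattice of the toric quotient of $C_1$ is what interacts with $\widehat{P_2}$ and apply the correct half of the lemma; since $C_1$ is coflasque its toric part $\widehat{C_1^{tor}}$ is coflasque, and I want $\Ext$ \emph{from} $C_1$ \emph{into} the quasi-trivial $P_2$, which by the lattice dictionary lands in $\Ext^1_\Gamma(\widehat{P_2}, \widehat{C_1^{tor}})$ — and this \emph{does} vanish because $\widehat{P_2}$ is permutation and $\widehat{C_1^{tor}}$ is coflasque. That is the mechanism.

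\textbf{Main obstacle.} The real work is the computation of $\Ext^1_k(C_1, P_2)$ for a central extension of reductive groups in terms of lattice data — controlling the semisimple part's contribution (the fundamental group $\mu$) and showing it does not obstruct splitting, using that $C_1^{ss}$ is simply connected. I would invoke the appendix's generalization of the Blinstein--Merkurjev result (mentioned in the introduction) or cite \cite{CT2008} directly for the structure of such $\Ext$ groups, and then the vanishing is a one-line application of Lemma~\ref{lemma:many_faces_flasque}. Handling base change: all the $\Ext$ and $\Hom$ computations are done over $\overline{k}$ with the Galois action and then descended, so one should be slightly careful that ``split over $\overline{k}$ compatibly with $\Gamma$'' gives ``split over $k$'', which is standard for central extensions by tori. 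Once both sequences split, the isomorphism $P_1 \times C_2 \cong P_2 \times C_1$ is immediate.
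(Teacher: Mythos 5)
The paper's own ``proof'' of this theorem is a bare citation: existence is \cite[Propositions 3.1 and 4.1]{CT2008} and the cancellation isomorphism is \cite[Proposition 4.2(i)]{CT2008}. Your proposal reconstructs the argument behind that citation, and your cancellation mechanism --- form $E = C_1\times_G C_2$, observe it is a central extension of $C_1$ by $P_2$ and of $C_2$ by $P_1$, split both, and conclude $P_2\times C_1\cong E\cong P_1\times C_2$ --- is exactly Colliot-Th\'el\`ene's. The outline is correct. Two comments on the splitting step, which is the only place where your mechanism drifts from the clean one. First, the lattice computation $\Ext^1_{\Gamma}(\widehat{P_2},\widehat{C_1^{tor}})=0$ (your final, corrected formulation --- note the contravariance of Cartier duality does put $\widehat{P_2}$ in the first slot, so it is part (a) of Lemma~\ref{lemma:many_faces_flasque} that applies, not the ``dual'' statement you first reach for) only classifies the extension when $C_1$ is a torus. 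For general reductive $C_1$ the cleaner route, and the one the paper itself uses in the analogous Proposition~\ref{prop:coflasque_general_versal}, is Theorem~\ref{thm:CTtorsor_to_group}: $\Ext^1_k(C_1,P_2)$ injects into $H^1(C_1,P_2)$ (\'etale cohomology of the underlying variety), and $H^1(C,Q)=0$ for $C$ coflasque and $Q$ quasi-trivial by \cite[Propositions 1.10 and 2.6]{CT2008} --- this is where both the coflasqueness of $\widehat{C_1^{tor}}$ and the simple connectedness of $C_1^{ss}$ (killing $\Pic$ of $\overline{C_1}$) enter, so the ``fundamental group contribution'' you flag as the main obstacle is absorbed automatically. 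Second, the appendix result you propose to invoke (the Blinstein--Merkurjev generalization, Theorem~\ref{thm:superBM}) is not the relevant tool here; it concerns $\op{Inv}^2_\ast(G,S)$, whereas the statement you need is Theorem~\ref{thm:CTtorsor_to_group}. With those substitutions your sketch is the standard proof.
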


\begin{proof}
 The existence statements are \cite[Proposition 3.1]{CT2008} and \cite[Proposition 4.1]{CT2008}. The isomorphism is \cite[Proposition 4.2(i)]{CT2008}. 
\end{proof}

\begin{proposition} \label{prop:coflasque_general_versal}
Suppose $G$ is a reductive algebraic group and
consider a coflasque resolution
\[
1 \to P \to C \to G \to 1
\]
where $P$ is a quasi-trivial torus and $C$ is coflasque.
Suppose there exists an extension
\[
1 \to S \to H \to G \to 1
\] 
where $S$ is a central special torus.
Then there exists a morphism $C \to H$ inducing
a morphism of the extensions above that is the identity on $G$.
\end{proposition}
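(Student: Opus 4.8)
The plan is to mirror the proof of the versality statement for coflasque resolutions of the second type in the torus case (Lemma~\ref{lem:versal_second_type}), but now working with the non-abelian extension $1 \to P \to C \to G \to 1$. Since $P$ and $S$ are central, both extensions are classified by classes in $\Ext^1_k(G,P)$ and $\Ext^1_k(G,S)$ respectively, and one can form pushouts and pullbacks of central extensions along homomorphisms of the central subgroup. The key point will be to exhibit a homomorphism $\beta \colon S \to P$ (or rather to find the relevant obstruction vanishing) so that the pushout of $1 \to S \to H \to G \to 1$ along $\beta$ becomes isomorphic, as an extension of $G$, to $1 \to P \to C \to G \to 1$; then the desired morphism $C \to H$ is obtained as $C \cong H \oplus_S P \to H$ where I mean the ``fibered sum'' (Baer-type) construction of central extensions.

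First I would reduce to character lattices. Apply the functor $G \mapsto (\widehat{G^{tor}}, \text{coroot data})$ — more concretely, since $P$ and $S$ are tori and the extensions are central, the relevant obstruction lives in a group built from $\Hom$ and $\Ext^1$ of $\Gamma_k$-lattices together with the fundamental group of $G^{ss}$. In fact the cleanest route is: pull everything back to $G^{tor}$ is not quite right because $G^{ss}$ may be non-simply-connected, so instead use that $C$ is coflasque, meaning $C^{ss}$ is simply connected and $C^{tor}$ is a coflasque torus. I would first handle the semisimple direction: since $C^{ss}$ is simply connected and $S$, $P$ are tori, any homomorphism issue is controlled entirely at the level of tori after quotienting by $G^{ss}$; a central extension of $G$ by a torus restricted to $G^{ss}$ is the pullback of a central extension of $G^{ss}$ by the torus, which is classified by $\Hom(\widehat{T}, \pi_1(G^{ss})^\vee)$-type data, and I would need the compatibility of the two extensions there. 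The main structural input is Theorem~\ref{thm:CT_collected} (the flasque/coflasque resolutions exist and are essentially unique) together with the characterization in Proposition~\ref{prop:invertible_H1_vanish} that $S$ special means $\widehat{S}$ is an invertible $\Gamma_k$-lattice.

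Concretely, here is the argument I expect to write. Form the fiber product $H \times_G C$; it sits in a central extension $1 \to S \to H\times_G C \to C \to 1$ obtained by pulling back $H \to G$ along $C \to G$. Because $S$ is a central special torus, and $C$ is coflasque, I claim this extension splits after a quasi-trivial modification — more precisely, I want a homomorphism $C \to H\times_G C$ splitting the projection to $C$, which would then compose with projection to $H$ to give the desired map. To get the splitting: the pulled-back extension is classified by a class in $\Ext^1_k(C, S)$; I would analyze this via the exact sequence relating $\Ext^1_k(C,S)$ to $\Ext^1$ of the torus quotient $C^{tor}$ and the simply connected part. Since $\widehat{S}$ is invertible and $\widehat{C^{tor}}$ is coflasque, the group $\Ext^1_{\Gamma_k}(\widehat{C^{tor}}, \widehat{S})$ — wait, duality — the relevant $\Ext^1$ between the coflasque lattice and the invertible one vanishes by Lemma~\ref{lemma:many_faces_flasque}(a), since an invertible lattice is in particular a direct summand of a permutation lattice and coflasque lattices kill $\Ext^1$ against permutation (hence invertible) lattices. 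That vanishing is exactly what forces the pulled-back extension over $C$ to be trivial, giving the splitting, hence the map $C \to H$ covering $\id_G$.

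The step I expect to be the main obstacle is handling the semisimple part cleanly: the extension $1 \to S \to H \to G \to 1$ is not an extension of tori, and restricting to $G^{ss}$ one must argue that because $C^{ss}$ is simply connected there is no obstruction coming from $\pi_1(G^{ss})$ — i.e., one must show the pullback extension over $C$ genuinely reduces to a question about the torus $C^{tor}$ and an invertible-versus-coflasque $\Ext^1$ computation, with no residual contribution from the root datum. This requires knowing that central extensions of a reductive group by a torus are controlled by the combinatorial data (character lattice of the torus quotient together with $\pi_1$ of the derived group) in the way described in \cite{CT2008}, and carefully tracking that a simply connected derived group contributes nothing. Once that bookkeeping is set up, the vanishing of the relevant lattice $\Ext^1$ via Lemma~\ref{lemma:many_faces_flasque} finishes the proof, exactly parallel to Lemma~\ref{lem:versal_second_type}.
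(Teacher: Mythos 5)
Your concrete argument has the same opening move as the paper's proof: form the fiber product $E = H \times_G C$, observe that it is a central extension $1 \to S \to E \to C \to 1$, and reduce the proposition to splitting this extension, after which $C \to E \to H$ is the desired map. (Your first paragraph's framing via a pushout along some $\beta\colon S \to P$ with $C \cong H \oplus_S P$ points the arrow the wrong way -- a pushout receives a map \emph{from} $H$, it does not map to $H$ -- but you discard that framing in favor of the fiber product, which is correct.) Where you diverge from the paper is in how the splitting is obtained. The paper quotes Colliot-Th\'el\`ene's vanishing $H^1(C,Q)=0$ for $C$ coflasque and $Q$ a quasi-trivial torus, uses the retraction $S \to Q \to S$ coming from specialness to get $H^1(C,S)=0$, and then invokes the identification of $\Ext^1_k(C,S)$ with a subgroup of $H^1(C,S)$ (Theorem~\ref{thm:CTtorsor_to_group}) to conclude the extension splits. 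You instead propose to reduce to the torus quotient $C^{tor}$ and use the lattice-level vanishing; the vanishing you invoke is indeed the right one, namely $\Ext^1_{\Gamma_k}(\widehat{S},\widehat{C^{tor}})=0$ for $\widehat{S}$ invertible and $\widehat{C^{tor}}$ coflasque, by Lemma~\ref{lemma:many_faces_flasque}(a) and Proposition~\ref{prop:invertible_H1_vanish} (note the duality direction: an extension of $C^{tor}$ by $S$ dualizes to an extension of $\widehat{S}$ by $\widehat{C^{tor}}$).

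The genuine gap is the step you yourself flag: you never prove that the central extension $1 \to S \to E \to C \to 1$ is induced from a central extension of $C^{tor}$ by $S$, only express the hope that the ``bookkeeping'' in \cite{CT2008} delivers it. As written, the proof is incomplete at exactly its load-bearing point. The good news is that the missing argument is short and is essentially the one the paper uses for Proposition~\ref{prop:flasqueOfCoflasque}: the derived subgroup $[E,E]$ surjects onto $C^{ss}$ with finite central kernel contained in $S$, so since $C^{ss}$ is simply connected the map $[E,E] \to C^{ss}$ is an isomorphism; then $S \cap [E,E]=1$, the quotient $E/[E,E]$ is a central extension of $C^{tor}$ by $S$ (an extension of tori by a torus, hence a torus), and the natural map $E \to C \times_{C^{tor}} (E/[E,E])$ is an isomorphism of extensions. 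With that in place, the lattice vanishing above splits $E/[E,E] \to C^{tor}$, hence splits $E \to C$ after pulling back, and your proof closes. So your route is viable and arguably more elementary (it avoids the \'etale cohomology input $H^1(C,Q)=0$ from \cite{CT2008}), but you must actually carry out the reduction to $C^{tor}$ rather than defer it.
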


\begin{proof}
This proof is a variation of that of Proposition~4.2~of~\cite{CT2008}.
Let $E$ be the fiber product of $H$ and $C$ over $G$.
We have a commutative diagram
\[
\xymatrix{
& & 1 \ar[d] & 1 \ar[d] \\
& & P \ar@{=}[r] \ar[d] & P \ar[d] \\
1 \ar[r] & S \ar[r] \ar@{=}[d] & E \ar[d] \ar[r] & C \ar[r] \ar[d] & 1 \\
1 \ar[r] & S \ar[r] & H \ar[r] \ar[d] & G \ar[r] \ar[d] & 1 \\
& & 1 & 1 & }
\]
with exact rows and columns.
From \cite[Proposition 1.10 and 2.6]{CT2008},
we know $H^1(C,Q)=0$ for $C$ coflasque and $Q$ a
quasi-trivial torus.
Since $S$ is special there is a factorization $S \to Q \to S$ of the
identity for some
quasi-trivial torus $Q$, and thus $H^1(C,S)=0$.
Arguing as in the proof of \cite[Proposition 3.2]{CT2008}
(or using Theorem~\ref{thm:CTtorsor_to_group} below),
we conclude the group extension
\[
1 \to S \to E \to C \to 1
\]
is split.
The composite morphism $C \to E \to H$ gives the desired result.
\end{proof}

\begin{proposition} \label{prop:coflasque_resolution_independence}
Given a reductive algebraic group $G$ and a coflasque resolution
\[
1 \to P \to C \to G \to 1 \ ,
\]
the natural morphism
\[
H^1(k,C) \to H^1(k,G)
\]
is injective and its image is independent of the choice of
coflasque resolution.
\end{proposition}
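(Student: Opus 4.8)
The plan is to prove the two assertions — injectivity and independence of the image — by leveraging the versality of coflasque resolutions established in Proposition~\ref{prop:coflasque_general_versal}, together with the twisting formalism for nonabelian Galois cohomology. First I would address injectivity. Fix a coflasque resolution $1 \to P \to C \to G \to 1$ with $P$ quasi-trivial. The obstruction to injectivity of $H^1(k,C) \to H^1(k,G)$ is measured by the fibers: two classes in $H^1(k,C)$ have the same image in $H^1(k,G)$ precisely when they differ by an element coming from $H^1(k, {}^\xi P)$, where ${}^\xi P$ is the twist of $P$ by a cocycle $\xi$ representing the relevant $C$-torsor (here $P$ is central, so the twisted form is just $P$ itself, since inner twisting by a central subgroup is trivial). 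Concretely, from the exact sequence of pointed sets associated to $1 \to P \to C \to G \to 1$ and its twists, the fiber of $H^1(k,C) \to H^1(k,G)$ over the image of a class $[\xi]$ is a quotient of $H^1(k,P)$. Since $P$ is a quasi-trivial torus, $H^1(k,P) = 1$ by Hilbert~90 and Shapiro's lemma (Proposition~\ref{prop:hilbert90_shapiro}(c), as a quasi-trivial torus is a product of Weil restrictions of $\gm$). Hence every fiber is a single point, which is exactly injectivity.

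Next, for the independence of the image, I would use Theorem~\ref{thm:CT_collected}: given two coflasque resolutions $1 \to P_i \to C_i \to G \to 1$ for $i = 1,2$, there is an isomorphism $P_1 \times C_2 \cong P_2 \times C_1$ over $G$. Even more directly, I would invoke Proposition~\ref{prop:coflasque_general_versal}: since $P_2$ is quasi-trivial, hence special, the extension $1 \to P_2 \to C_2 \to G \to 1$ receives a morphism from the coflasque resolution $1 \to P_1 \to C_1 \to G \to 1$, i.e.\ there is a map $\varphi\colon C_1 \to C_2$ compatible with the projections to $G$. This induces a commutative triangle on $H^1$: the map $H^1(k,C_1) \to H^1(k,G)$ factors through $H^1(k,C_2) \to H^1(k,G)$, so $\op{im}(H^1(k,C_1) \to H^1(k,G)) \subseteq \op{im}(H^1(k,C_2) \to H^1(k,G))$. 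By symmetry (swapping the roles of the two resolutions, again using that $P_1$ is special) the reverse inclusion holds, so the two images coincide.

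The main obstacle I anticipate is making the fiber computation for injectivity fully rigorous in the nonabelian setting: the exact sequence of pointed sets only tells us that the fiber over the base point is the image of $H^1(k,P)$, and to control all fibers one must twist the sequence $1 \to P \to C \to G \to 1$ by a cocycle with values in $C$ and verify that the twisted subgroup ${}^\xi P$ is still $P$ (which holds because $P$ is central in $C$, so $C$ acts trivially on $P$ by inner automorphisms) and still quasi-trivial with vanishing $H^1$. Once that bookkeeping is in place, the vanishing $H^1(k,P) = 1$ does all the work. A secondary point requiring a line of care is that the morphism $\varphi\colon C_1 \to C_2$ from Proposition~\ref{prop:coflasque_general_versal} need not be injective or surjective, but this is irrelevant: we only need the induced commutative triangle on cohomology, which follows from functoriality of $H^1(k,-)$ applied to a morphism of short exact sequences inducing the identity on $G$.
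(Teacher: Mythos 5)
Your argument is correct. For injectivity you follow essentially the same route as the paper: since $P$ is central in $C$, the twisted subgroup in each twisted sequence is again $P$, so every nonempty fiber of $H^1(k,C)\to H^1(k,G)$ is an orbit (hence a quotient) of $H^1(k,P)$, which vanishes because $P$ is quasi-trivial (Hilbert~90 plus Shapiro); your extra care with the twisting bookkeeping is exactly what the paper's one-line justification compresses. For the independence of the image, however, you take a genuinely different path. The paper invokes Theorem~\ref{thm:CT_collected}, more precisely the isomorphism $P_1\times C_2\cong P_2\times C_1$ from Colliot-Th\'el\`ene together with the compatibility over $G$ extracted from \emph{the proof} of \cite[Proposition 4.2(i)]{CT2008}, and then uses $H^1(k,P_i)=1$ to identify $H^1(k,C_1)\cong H^1(k,C_2)$ compatibly with the maps to $H^1(k,G)$. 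You instead apply Proposition~\ref{prop:coflasque_general_versal} twice: since each $P_i$ is quasi-trivial, hence special, and the extensions are central (as noted after the definition of coflasque resolutions), versality yields morphisms $C_1\to C_2$ and $C_2\to C_1$ over $G$, giving both inclusions of images by functoriality of $H^1$. There is no circularity, as Proposition~\ref{prop:coflasque_general_versal} precedes this statement and does not use it. Your route is slightly weaker in output --- it gives equality of images without producing an explicit bijection $H^1(k,C_1)\cong H^1(k,C_2)$ --- but it is more self-contained, resting only on results proved in the paper rather than on details of Colliot-Th\'el\`ene's proof, and it suffices for the proposition as stated.
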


\begin{proof}
Since $P$ is central, the fibers of the natural morphism
$H^1(k,C) \to H^1(k,G)$ are either empty or are torsors under
$H^1(k,P)$.  Since $P$ is quasi-trivial, $H^1(k,P)$ is trivial
and we conclude that $H^1(k,C) \to H^1(k,G)$ is injective.

Suppose
\[
1 \to P' \to C' \to G \to 1 \ ,
\]
is another coflasque resolution of $G$.
From \cite[Proposition 4.2(i)]{CT2008} and its proof,
there is an isomorphism $\alpha : P \times C' \cong P' \times C$.
such that the diagram
\[
\xymatrix{
P \times C' \ar[d]^\alpha \ar[r] & C' \ar[r] & G \ar@{=}[d]\\
P' \times C \ar[r] & C \ar[r] & G}
\]
commutes.
As above, since $P$ is quasi-trivial, the projection $P \times C' \to C'$
induces an isomorphism $H^1(k,P \times C') \cong H^1(k,C')$.
Thus the composite
\[
H^1(k,C') \to H^1(k,P \times C') \xrightarrow{\alpha}
H^1(k,P' \times C) \to H^1(k,C)
\]
is an isomorphism and induces equality of the images in $H^1(k,G)$.
\end{proof}

Note that a flasque resolution of a reductive algebraic group $G$ does not in
general give rise to a flasque resolution of its abelianization.
However, this does occur if $G$ is coflasque:

\begin{proposition} \label{prop:flasqueOfCoflasque}
If $C$ is a coflasque reductive algebraic group, then any flasque resolution of
$C$ gives rise to a commutative diagram
\[
\xymatrix{
1 \ar[r] &
S \ar[r] \ar@{=}[d] &
H \ar[r] \ar[d] &
C \ar[r] \ar[d] &
1 \\
1 \ar[r] &
S \ar[r] &
H^{tor} \ar[r] &
C^{tor} \ar[r] &
1 }
\]
with exact rows, where $H$ is a quasi-trivial algebraic group,
$S$ is a flasque torus, and the vertical maps are abelianizations.
Note that both rows are flasque resolutions. 
\end{proposition}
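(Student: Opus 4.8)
The plan is to produce the bottom row by applying the abelianization functor $(-)^{tor}$ to the given flasque resolution $1 \to S \to H \to C \to 1$ and then to check that exactness survives. Write $\phi \colon H \to C$ for the given surjection, so that $S = \ker \phi$ is central in $H$ (the extension is central, as noted above), and write $q_H \colon H \to H^{tor}$, $q_C \colon C \to C^{tor}$ for the abelianization maps. Since the image of a derived subgroup under a surjection of algebraic groups is the derived subgroup of the target, $\phi$ restricts to a surjection $H^{ss} = [H,H] \twoheadrightarrow [C,C] = C^{ss}$ with kernel $S \cap H^{ss}$. As $S$ is central, $S \cap H^{ss}$ lies in the center of $H^{ss}$ and is therefore finite, so $H^{ss} \to C^{ss}$ is a central isogeny.

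The crux of the argument, and the step I expect to be the main obstacle, is to upgrade this isogeny to an isomorphism, equivalently to show $S \cap H^{ss} = 1$. Here the two so-far-unused hypotheses enter: since $H$ is quasi-trivial, $H^{ss}$ is simply connected, and since $C$ is coflasque, $C^{ss}$ is simply connected. A central isogeny $f$ between simply connected semisimple groups is an isomorphism: its kernel $N$ is the fundamental group of the quotient $C^{ss} = H^{ss}/N$ — isogenous semisimple groups share the simply connected central cover, which here is $H^{ss}$ itself — and this vanishes because $C^{ss}$ is simply connected. (In characteristic zero this is immediate, as the kernel is then \'etale and a connected central cover of a simply connected group is trivial.) Hence $H^{ss} \to C^{ss}$ is an isomorphism and $S \cap H^{ss} = N = 1$.

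The rest is a diagram chase using $S \cap H^{ss} = 1$. First, the left vertical composite $S \hookrightarrow H \xrightarrow{q_H} H^{tor} = H/H^{ss}$ has kernel $S \cap H^{ss} = 1$, so it is injective; this is exactly what lets us write the lower-left term as $S$ with the left vertical an equality, and the left square commutes by the very definition of this composite. Second, the induced map $\bar\phi \colon H^{tor} \to C^{tor}$ is surjective because $q_C \circ \phi \colon H \to C^{tor}$ is, and the right square commutes since $\bar\phi$ is induced by $\phi$. Third, I would compute the kernel: $q_H(h) \in \ker \bar\phi$ iff $\phi(h) \in C^{ss} = \phi(H^{ss})$, i.e. iff $h \in H^{ss} \cdot S$, so that $\ker \bar\phi = q_H(H^{ss} \cdot S) = q_H(S)$, which is precisely the image of $S$ in $H^{tor}$. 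This yields exactness of the bottom row $1 \to S \to H^{tor} \to C^{tor} \to 1$.

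Finally, I would identify this bottom row as a flasque resolution of the first type of the torus $C^{tor}$: the term $S$ is a flasque torus by hypothesis, $H^{tor}$ is a quasi-trivial torus because $H$ is quasi-trivial, and $C^{tor}$ is a torus. Throughout I work with smooth group schemes; since every group in the bottom row is a torus, the injectivity, surjectivity, and kernel computations above may all be verified on $\overline{k}$-points.
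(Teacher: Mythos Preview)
Your proof is correct and follows essentially the same route as the paper's: both identify the key step as showing that the restriction $H^{ss} \to C^{ss}$ is an isomorphism (equivalently $S \cap H^{ss} = 1$), and then do the same diagram chase to obtain exactness of the bottom row. The only minor difference is that you invoke simple-connectedness of both $H^{ss}$ and $C^{ss}$ to identify the kernel with the fundamental group of $C^{ss}$, whereas the paper uses only that $C^{ss}$ is simply connected (a simply connected semisimple group admits no proper connected central cover); either justification is fine.
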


\begin{proof}
The only potential problem is that abelianization is not left-exact in
general.
The morphism $\varphi: H \to C$ induces a surjective morphism $H' \to C'$ of
their derived subgroups with commutative kernel $H' \cap S$.
However, since $C$ is coflasque, the semisimple algebraic group
$C'$ is simply-connected by definition.
Thus $H' \to C'$ is an isomorphism and $H' \cap S = 1$.
Consider the map $S \to \varphi^{-1}(C')/H'$. 
The kernel is $S \cap H' = 1$. Given $h \in \varphi^{-1}(C')$, 
since $\varphi|_{H'}$ is an isomorphism, there is some $h'$ with
$\varphi(hh') = 1$ so $S \to \varphi^{-1}(C')/H'$ is surjective.
We conclude that 
\begin{displaymath}
  1 \to S \to H/H' \to C/C' \to 1
\end{displaymath}
is exact. 
\end{proof}

%%%%%%%%%%%%%%%%%%%%%%%%%%%%%%%%%%%%%%%%%%%%%%%%%%%%%%%%%%%%%%%%
%%%%%%%%%%%%%%%%%%%%%%%%%%%%%%%%%%%%%%%%%%%%%%%%%%%%%%%%%%%%%%%%
% FAYGO %
%%%%%%%%%%%%%%%%%%%%%%%%%%%%%%%%%%%%%%%%%%%%%%%%%%%%%%%%%%%%%%%%
%%%%%%%%%%%%%%%%%%%%%%%%%%%%%%%%%%%%%%%%%%%%%%%%%%%%%%%%%%%%%%%%

\section{Cohomological Invariants}
\label{sec:big_thm}

We review the notion of a \emph{cohomological invariant}
following \cite{Skip}.
Fix a base field $k$ and recall our notation $k\hyph\mathsf{Fld}$
for the category of field extensions of $k$.
We consider two functors
\[
A : k\hyph\mathsf{Fld} \to \mathsf{Sets}_\ast
\]
and
\[
H : k\hyph\mathsf{Fld} \to \mathsf{Ab} \ .
\]
A \emph{normalized $H$-invariant of $A$} is a morphism of functors $A \to H$.
The group of all such invariants will be denoted $\op{Inv}_\ast(A,H)$.

\begin{remark}
We demand a priori that $A$ is a functor into \emph{pointed} sets.
This explains the adjective ``normalized.''
This condition is harmless as a general $H$-invariant of $A$
can be written uniquely as the sum of a normalized invariant
and a ``constant'' invariant coming from $H(k)$.
\end{remark}

The two kinds of functors we will consider are as follows.
Given an algebraic group $G$ over $k$,
we may view Galois cohomology
\[
H^i(-,G) : k\hyph\mathsf{Fld} \to \mathsf{Sets}_\ast
\]
as a functor (the codomain may be interpreted as $\mathsf{Grp}$ if $i=0$
or $\mathsf{Ab}$ if $G$ is commutative).
If the functor $A$ is $H^1(-,G)$ and the functor $H$ is $H^i(-,C)$
for $G$, $C$ algebraic groups, we let
\[
\op{Inv}^i_\ast(G,C)
\]
denote the group of normalized $H$-invariants of $A$.

Let $S$ be a torus.
Recall that $\Ext^1_k(G,S)$ is the group of central extensions of algebraic
groups
\begin{equation} \label{eq:elemOfExt}
1 \to S \to H \to G \to 1 
\end{equation}
up to equivalence under the usual Baer sum.
At the risk of some ambiguity, we will use $[H]$ to denote the class of such an
extension.
Given such an extension, there is a connecting homomorphism
\[
\partial_H : H^1(k,G) \to H^2(k,S)
\]
from the long exact sequence in Galois cohomology.

\begin{theorem} \label{thm:superBM}
Let $G$ be a reductive algebraic group over $k$
and $S$ a special torus over $k$.
Let $\Ext^1_k(G,S)$ be the group of isomorphism classes of central algebraic group
extensions of $G$ by $S$ under Baer sum.
Then the canonical map
\[
\Ext^1_k(G,S) \to \op{Inv}^2_\ast(G,S)
\]
that takes an extension $\xi$
to its connecting homomorphism $\partial_\xi$,
is an isomorphism of groups.
\end{theorem}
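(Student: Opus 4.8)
The plan is to prove this as a consequence of the appendix's generalization of the Blinstein--Merkurjev theorem, combined with the special structure of the torus $S$. Recall that Blinstein--Merkurjev computes $\op{Inv}^2_\ast(G, \gm) = \op{Inv}^2_\ast(G, \mathbb{G}_m)$ in terms of a certain group built from $\Pic$ and $\Br$ of $G$ (together with the extension data), and the cited appendix presumably extends this to coefficients in an arbitrary torus $S$, identifying $\op{Inv}^2_\ast(G,S)$ with a group of ``$S$-valued'' data on $G$. The canonical map from $\Ext^1_k(G,S)$ always lands in $\op{Inv}^2_\ast(G,S)$ via the connecting homomorphism $\partial$, and it is a homomorphism of groups because Baer sum of central extensions corresponds to addition of connecting maps in the long exact cohomology sequence. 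So the content is injectivity and surjectivity, and the special hypothesis on $S$ is exactly what makes both hold.

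First I would set up the comparison with the $\gm$-case. Since $S$ is special, Proposition~\ref{prop:invertible_H1_vanish} gives that $\widehat{S}$ is an invertible $\Gamma_k$-lattice, hence a direct summand of a permutation lattice; dually, $S$ is a direct factor of a quasi-trivial torus $Q = R_{E/k}\gm$ for some \'etale algebra $E$, so there are maps $S \to Q \to S$ composing to the identity. By additivity in the coefficient torus (both $\Ext^1_k(G,-)$ and $\op{Inv}^2_\ast(G,-)$ are additive functors in their second argument, compatibly with the canonical map), it suffices to treat the case $S = Q = R_{E/k}\gm$ and then split off the summand. For $S = R_{E/k}\gm$, Shapiro's lemma / Proposition~\ref{prop:hilbert90_shapiro} reduces everything over $k$ to the $\gm$-case over the factor fields $F_i$ of $E$; more precisely $\op{Inv}^2_\ast(G, R_{E/k}\gm)$ and $\Ext^1_k(G, R_{E/k}\gm)$ both decompose via Weil restriction, and compatibility of the connecting map with Shapiro isomorphisms lets one invoke the known Blinstein--Merkurjev statement (the isomorphism $\Ext^1 \cong \op{Inv}^2_\ast$ for $\gm$-coefficients) over each $F_i$.

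The key steps, in order: (1) check the map $\Ext^1_k(G,S) \to \op{Inv}^2_\ast(G,S)$ is a well-defined group homomorphism, functorially in field extensions --- routine from the long exact sequence and naturality of connecting maps; (2) reduce to $S$ quasi-trivial using that $S$ special implies $S$ is a retract of a quasi-trivial torus, together with additivity of both sides in $S$ and compatibility of the retraction with the canonical map; (3) reduce a quasi-trivial $S = R_{E/k}\gm$ to the case $S = \gm$ over the factor fields by Shapiro/Weil restriction adjunction, again checking the canonical map is compatible with these identifications; (4) invoke the (generalized) Blinstein--Merkurjev isomorphism for $\gm$-coefficients --- this is what the appendix supplies, and it may already be stated there in enough generality that steps (2)--(3) are subsumed. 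One should be slightly careful that Blinstein--Merkurjev in its original form may require $G$ to be smooth connected (which holds, $G$ reductive) or may have a character-group hypothesis; the reductive hypothesis here is presumably exactly what is needed, perhaps via the vanishing $H^3(k,\gm[\ldots])$ or via $\Pic(\overline{G})$ being finitely generated.

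The main obstacle I expect is step (2): showing that specialness of $S$ --- as opposed to quasi-triviality --- is enough, i.e.\ controlling the behavior of both $\Ext^1_k(G,-)$ and $\op{Inv}^2_\ast(G,-)$ under passing to a direct summand, and in particular verifying that a retraction $S \to Q \to S$ of tori induces compatible retractions on $\Ext^1_k(G,-)$ and on $\op{Inv}^2_\ast(G,-)$ that are intertwined by the canonical connecting-homomorphism map. The subtlety is that $\op{Inv}^2_\ast(G,S)$ is defined as a set of natural transformations and one must argue functoriality in $S$ is strong enough; but since a morphism of tori $S \to S'$ induces $H^2(-,S) \to H^2(-,S')$ and hence postcomposition $\op{Inv}^2_\ast(G,S) \to \op{Inv}^2_\ast(G,S')$, and similarly pushout on central extensions, and the connecting map is natural in the coefficient group, this should go through --- the real work is just bookkeeping that a split injection $S \hookrightarrow Q$ with splitting $Q \twoheadrightarrow S$ yields $\op{Inv}^2_\ast(G,S)$ as a direct summand of $\op{Inv}^2_\ast(G,Q) \cong \Ext^1_k(G,Q)$ cut out by the same idempotent that cuts out $\Ext^1_k(G,S) \subseteq \Ext^1_k(G,Q)$. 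Everything else is a matter of assembling cited results, so I would write the proof as: reduce to quasi-trivial $S$, reduce to $\gm$, cite the appendix.
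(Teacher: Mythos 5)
Your reduction strategy is essentially sound, but it is a genuinely different route from the one the paper takes. You propose: (i) note the canonical map is a homomorphism; (ii) use that a special $S$ is a retract of a quasi-trivial torus $Q$, with both $\Ext^1_k(G,-)$ and $\op{Inv}^2_\ast(G,-)$ additive and functorial in the coefficient torus, to split off $S$ from $Q$; (iii) use Weil restriction/Shapiro to reduce $Q=R_{E/k}\gm$ to $\gm$-coefficients over the factor fields; (iv) quote the $\gm$-case. The paper instead proves the statement directly for an arbitrary special torus, generalizing Blinstein--Merkurjev's argument wholesale: it develops a ``torsors over torsors'' result (Theorem~\ref{thm:torsors_on_torsors}), an explicit form of Sansuc's exact sequence in which the map $H^1(G,S)\to H^2(k,S)$ is literally $\delta_X$ (Theorem~\ref{thm:sansuc_with_delta}), and then runs the versal-torsor argument, using Colliot-Th\'el\`ene's identification $\Ext^1_k(G,S)\cong H^1(G,S)$ (Theorem~\ref{thm:CTtorsor_to_group}). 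Your route is shorter given the literature and is exactly the ``short argument via Weil restrictions'' that the paper's remark after the theorem acknowledges; the paper's route is self-contained and produces intermediate results of independent interest (which is why the appendix exists).

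One caveat you should make explicit: the input you need in step (iv) is \emph{not} Blinstein--Merkurjev's Theorem 2.4 as stated, which only asserts the \emph{existence} of an isomorphism $\op{Inv}^2_\ast(G,\gm)\cong\Pic(\overline G{}^{\,\prime})$-type data via Sansuc's sequence; the present theorem asserts that the \emph{specific} map $\xi\mapsto\partial_\xi$ is an isomorphism, and two abstractly isomorphic groups tell you nothing about a particular homomorphism between them. The $\gm$-case in this refined form is due to Borovoi--Demarche \cite{BorovoiDemarche} (by reworking Sansuc's sequence \cite{Sansuc1981}) or Lourdeaux \cite{Lourdeaux}; citing one of those closes the argument, whereas citing ``the appendix'' would be circular here, since the appendix is the paper's proof of this very theorem. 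With that citation fixed, the remaining work in your steps (ii)--(iii) is the bookkeeping you describe: the Shapiro-type identification $\op{Inv}^2_\ast(G,R_{F/k}\gm)\cong\op{Inv}^2_\ast(G_F,\G_{m,F})$ and its counterpart for $\Ext^1$, checked to be compatible with connecting homomorphisms, and the retraction $S\to Q\to S$ inducing compatible idempotents on both sides. That all goes through, and indeed the paper uses exactly the same retract-of-quasi-trivial trick, only at the level of its lemmas (e.g.\ Lemmas~\ref{lem:Rosenlicht_Sansuc}, \ref{lem:BrFunctionField}, \ref{BM:Pic_stable}) rather than at the level of the final statement.
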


\begin{proof}
  For the proof, see the \hyperlink{proof:BM}{Appendix}.
\end{proof}

\begin{remark}
  The above theorem is a generalization of a result of Blinstein and
  Merkurjev~\cite[Theorem 2.4]{Blinstein}, which shows
  that there \emph{exists} an isomorphism when $S=\gm$.
  In their proof, the isomorphism comes from a map in an exact sequence of
  Sansuc (see Proposition~6.10 of \cite{Sansuc1981}), which is somewhat
  mysterious.
  However, Borovoi and Demarche show in \cite[Theorem 2.4]{BorovoiDemarche}
  that one can modify Sansuc's sequence so that it produces exactly the
  isomorphism given in the above theorem.
  Another proof that Blinstein and Merkurjev's result holds with the
  desired isomorphism was given by Lourdeaux in
  \cite[\S{3.1.2}]{Lourdeaux}.
  
  All of the aforementioned results are for the case when $S=\gm$.
  A short argument via Weil restrictions shows that the result holds for
  all special tori.
  The authors had proved Theorem~\ref{thm:superBM} before being made
  aware of the work of Borovoi, Demarche and Lourdeaux.
In addition, a referee pointed out that we can actually
prove our result using only the original weaker statement of Blinstein
and Merkurjev.
  We have retained the theorem above and included its proof in an appendix
since we believe it clarifies matters and is of independent
interest.
\end{remark}

\subsection{Connecting Coflasque Resolutions and Cohomological Invariants}

The purpose of this section is to prove
Theorem~\ref{thm:intro_main_cor}.
We will actually prove a stronger theorem. First, we recall a 
definition.

\begin{definition}
  An algebraic group $G$ over $k$ is \emph{special} if
  $H^1(K,G_K)=\ast$ for every field extension $K/k$.
\end{definition}
For a complete classification of special reductive groups
over an arbitrary field, see \cite{Huruguen}. See also 
\cite{Merk_special} for a refinement, which is more computable in
practice. 

\begin{theorem} \label{thm:big_equivalence}
Let $G$ be a reductive algebraic group $G$ defined over a field $k$.
The following sets are equal:
\begin{enumerate}
\item
$ \displaystyle
\ICP(k,G) = \bigcap_A \ker
\left( H^1(k,G) \to H^1(k,\Aut(A)) \right)
$
where the intersection runs over all separable $k$-algebras $A$
with a $G$-action.
\bigskip

\item
$ \displaystyle
\ICP_{Br}(k,G) := \bigcap_E \bigcap_\alpha \op{ker}
\left( \alpha(k) \colon H^1(k,G) \to \Br(E) \right)
$
where the intersections run over all \'etale algebras $E$
and all normalized cohomological invariants $\alpha$.
\bigskip

\item
$ \displaystyle
\ICP_{qt}(k,G) := \bigcap_S \bigcap_\alpha \op{ker}
\left( \alpha(k) \colon H^1(k,G) \to H^2(k,S) \right)
$
where the intersections run over all quasi-trivial tori $S$
and all normalized cohomological invariants $\alpha$.
\bigskip

\item
$ \displaystyle
\ICP_{sp}(k,G) := \bigcap_S \bigcap_\alpha \op{ker}
\left( \alpha(k) \colon H^1(k,G) \to H^2(k,S) \right)
$
where the intersections run over all special tori $S$
and all normalized cohomological invariants $\alpha$.
\bigskip

\item
$ \displaystyle
\im\left(H^1(k,C) \to H^1(k,G)\right)
$
where $1 \to P \to C \to G \to 1$ is a coflasque resolution
of $G$.
\end{enumerate}
Moreover, if $\bigcap_{i \in I} X_i$ denotes any one of the intersections
from (a) through (d), then there exists an element $i_0 \in I$
such that $X_{i_0} = \bigcap_i X_i$.
\end{theorem}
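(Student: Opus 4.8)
The plan is to prove the chain of equalities $\text{(a)} = \text{(b)} = \text{(c)} = \text{(d)} = \text{(e)}$ together with the ``attained'' claim by establishing a cycle of inclusions, using Theorem~\ref{thm:superBM} to translate between cohomological invariants and central extensions, and Proposition~\ref{prop:coflasque_general_versal} to handle the versality that forces everything down to the coflasque resolution. First I would record the easy inclusions coming purely from the ranges of the intersections. Every quasi-trivial torus is special (permutation lattices are invertible), so $\ICP_{sp} \subseteq \ICP_{qt}$; conversely, since a special torus $S$ admits a factorization $S \to Q \to S$ of the identity through a quasi-trivial torus $Q$ (Proposition~\ref{prop:invertible_H1_vanish} and the definition of invertible), any invariant $\alpha\colon H^1(-,G)\to H^2(-,S)$ is the composite of an invariant into $H^2(-,Q)$ with $H^2(-,Q)\to H^2(-,S)$, hence has a kernel containing $\ICP_{qt}$; so $\ICP_{qt} \subseteq \ICP_{sp}$ and (c)$=$(d). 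Similarly, for an \'etale algebra $E$, the Weil restriction $R_{E/k}\gm$ is quasi-trivial with $H^2(k,R_{E/k}\gm) = \prod_i \Br(F_i)$ by Proposition~\ref{prop:hilbert90_shapiro}, so invariants into $\Br(E)$-type groups are a special case of invariants into $H^2(-,S)$ for quasi-trivial $S$, and conversely any quasi-trivial torus is a product of such Weil restrictions; this gives (b)$=$(c).

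Next I would connect the algebra side (a) to the invariant side (b). For one direction, given a separable algebra $A$ with $G$-action, the center $Z(A)$ is an \'etale algebra $E$, and there is an exact sequence relating $\Aut(A)$ to $\GL_1(A)/\gm$-type groups and $\Aut(E)$; the composite $H^1(k,G)\to H^1(k,\Aut(A))\to H^1(k,\Aut(E))\to \text{(torsor data)}$ and the Brauer-class invariant $H^1(k,\Aut(A)) \to \Br(E)$ (sending a form of $A$ to the class of the corresponding form in each factor) is a normalized cohomological invariant, so $\ker(H^1(k,G)\to H^1(k,\Aut(A)))\supseteq$... more precisely, an element killed by all Brauer invariants and all \'etale-algebra invariants is killed by $H^1(k,G)\to H^1(k,\Aut(A))$; this shows $\ICP_{Br}(k,G) \subseteq \ICP(k,G)$. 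For the reverse inclusion $\ICP(k,G)\subseteq \ICP_{Br}(k,G)$, I would take a normalized invariant $\alpha\colon H^1(-,G)\to \Br(E)$, use Theorem~\ref{thm:superBM} (applied with $S = R_{E/k}\gm$, which is special) to realize $\alpha$ as the connecting map $\partial_H$ of a central extension $1\to R_{E/k}\gm \to H \to G \to 1$, and then observe that $H$ acts on a suitable separable algebra (a product of matrix algebras over $E$ built from a faithful representation of $H$ in which $R_{E/k}\gm$ acts by scalars over $E$) with $\ker(H^1(k,G)\to H^1(k,\Aut(A)))$ contained in $\ker \partial_H$ — this is the mechanism by which representations of central extensions produce separable-algebra invariants, and it is essentially the construction alluded to in the remark about $A_X$ in the introduction.

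Then I would close the loop with (e). The inclusion $\im(H^1(k,C)\to H^1(k,G)) \subseteq \ICP_{sp}(k,G)$ follows because $C$ is coflasque: for any central special torus $S$ and any extension $1\to S \to H \to G \to 1$, Proposition~\ref{prop:coflasque_general_versal} produces a lift $C \to H$ over $G$, so the coflasque torsor, pushed into $H^1(k,G)$, comes from $H^1(k,H)$ and hence dies in $H^2(k,S)$ under $\partial_H$; since by Theorem~\ref{thm:superBM} every normalized invariant into $H^2(-,S)$ for special $S$ is such a $\partial_H$, the image of $H^1(k,C)$ lies in every such kernel. For the reverse inclusion $\ICP_{sp}(k,G)\subseteq \im(H^1(k,C)\to H^1(k,G))$, I would argue contrapositively: if $\xi \in H^1(k,G)$ is not in the image of $H^1(k,C)$, I need to produce a special torus $S$ and an invariant detecting $\xi$. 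The natural candidate is built from the coflasque resolution itself — the sequence $1\to P \to C\to G\to 1$ does not directly give an invariant since $P$ is quasi-trivial (so $\partial$ into $H^2(k,P)$ would be the wrong direction), so instead I would pass to a flasque resolution $1\to S \to H \to C \to 1$ of the coflasque group $C$ (using Proposition~\ref{prop:flasqueOfCoflasque} to keep track of the torus part), splice it with the coflasque resolution to get a central extension of $G$ by the flasque (hence, after checking, special — this is where retract-rationality-type input enters) torus $S$, and show its connecting map detects exactly the complement of $\im(H^1(k,C))$.

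The hard part will be this last reverse inclusion: showing that the coflasque resolution's image is not merely \emph{contained in} but \emph{equal to} the intersection over all special tori, i.e., that the single invariant coming from (a flasque resolution of) $C$ already suffices. This is also exactly what yields the final ``moreover'' clause — that the intersection is attained by a single $X_i$ — so I would prove the two together: exhibit one separable algebra $A_0$ (equivalently one \'etale algebra $E_0$, one quasi-trivial torus $S_0$, one special torus) whose associated kernel already equals $\im(H^1(k,C)\to H^1(k,G))$, namely the $A_0$ manufactured from a faithful representation of the quasi-trivial group $H$ appearing in a flasque resolution of $C$ spliced over $G$. Verifying that $\partial_{H}$ for this particular extension has kernel exactly $\im(H^1(k,C)\to H^1(k,G))$ — neither too big nor too small — is the crux; the ``too small'' direction is the versality argument above, and the ``too big'' direction requires a diagram chase in the long exact cohomology sequences of the spliced extension, using injectivity of $H^1(k,C)\to H^1(k,G)$ (Proposition~\ref{prop:coflasque_resolution_independence}) and the vanishing $H^1(k,P)=1$ for quasi-trivial $P$.
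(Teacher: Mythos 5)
Most of your outline tracks the paper's proof: the Weil-restriction identification $\Br(E)=H^2(k,R_{E/k}\gm)$ for (b)$=$(c), the retraction $S\to Q\to S$ through a quasi-trivial torus for (c)$=$(d), the inclusion $\im\left(H^1(k,C)\to H^1(k,G)\right)\subseteq \ICP_{sp}(k,G)$ via Theorem~\ref{thm:superBM} plus Proposition~\ref{prop:coflasque_general_versal}, and the passage between (a) and (b) via Theorem~\ref{thm:superBM} and a separable algebra built from a representation of the central extension (this last construction is a genuine lemma, Lemma~\ref{lem:PGL_from_Brauer}, not a remark, and on the other side you must first reduce to $\Aut(A)^\circ\cong\PGL_1(A)$ using connectedness of $G$ and Hilbert 90 for $\GL_1(A)$ --- your ``Brauer-class invariant on $H^1(k,\Aut(A))$'' is not defined before that reduction, since a general form of $A$ twists the center). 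The genuine gap is at the step you yourself call the crux: showing $\ICP_{qt}(k,G)\subseteq \im\left(H^1(k,C)\to H^1(k,G)\right)$ and the ``moreover'' clause. You dismiss the argument that actually works: the coflasque resolution $1\to P\to C\to G\to 1$ is a central extension, so its connecting map $\partial\colon H^1(-,G)\to H^2(-,P)$ is itself a normalized cohomological invariant valued in $H^2$ of the \emph{quasi-trivial} torus $P$, hence one of the invariants intersected in (c); by exactness of $H^1(k,C)\to H^1(k,G)\xrightarrow{\partial} H^2(k,P)$ its kernel is exactly $\im\left(H^1(k,C)\to H^1(k,G)\right)$. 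This single invariant gives the missing inclusion immediately and is precisely the element attaining the intersection demanded by the final clause (and, via Lemma~\ref{lem:PGL_from_Brauer}, it transports to a single separable algebra for case (a)). Your parenthetical ``$\partial$ into $H^2(k,P)$ would be the wrong direction since $P$ is quasi-trivial'' is a misconception: quasi-triviality kills $H^1(k,P)$, not $H^2(k,P)$, and that vanishing is exactly what makes $\ker(\partial)$ equal to, rather than merely containing, the image of $H^1(k,C)$.

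The substitute you propose in its place does not work. Splicing a flasque resolution $1\to S\to H\to C\to 1$ of $C$ with the coflasque resolution produces a central extension of $G$ whose kernel is (up to a harmless $P$-factor) the flasque torus $S$, and the detecting invariant you want would take values in $H^2(-,S)$. But flasque tori are not special in general: by Proposition~\ref{prop:invertible_H1_vanish} a torus is special exactly when its character lattice is invertible, and demanding that of the flasque torus in a resolution is the retract-rationality condition of Theorem~\ref{theorem:torus_rationality}, which fails for general $G$. So the invariant you construct is not among those intersected in (c) or (d), the phrase ``hence, after checking, special'' cannot be checked, and the contrapositive detection argument collapses. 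Once you replace this detour by the connecting map of the coflasque resolution itself, the ``neither too big nor too small'' verification you worry about reduces to the exactness statement above (the same exactness underlying Proposition~\ref{prop:coflasque_resolution_independence}), and the rest of your plan goes through essentially as in the paper.
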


\begin{remark}
The reductive hypothesis is harmless in
characteristic $0$, since in this case there is a canonical isomorphism
$H^1(k,G) \cong H^1(k,G/U)$ for a connected linear algebraic group
$G$ with unipotent radical $U$.
\end{remark}

\begin{remark}
For a finite constant group $G$, the invariant $\ICP(k,G)$ is always trivial
for almost tautological reasons.
Indeed, $H^1(k,G)$ classifies $G$-Galois algebras over $k$,
which are, in particular, separable algebras with a $G$-action.
\end{remark}

\begin{remark}
We expand on the final statement of the theorem in case (a).
Here, there exists a single separable algebra $A$ with a $G$-action
such that $\Zhe(k,G) = \ker( H^1(k,G) \to H^1(k,\Aut(A)))$.
Note, however, that this algebra $A$ is never unique.
Similar interpretations exist for cases (b)--(d).
\end{remark}

We first reduce Theorem~\ref{thm:big_equivalence} to Theorem~\ref{thm:intro_main_cor} 
following the suggestions of a referee.

\begin{proof}[Proof of equality of (b)-(e) in
Theorem~\ref{thm:big_equivalence}]
It is not hard to see that sets (b)-(d) coincide.
Let us show that the sets (b) and (e) coincide too. Take a coflasque resolution 
\begin{displaymath}
	1 \to P \to C \to G \to 1
\end{displaymath}
and a class $\xi \in \im\left(H^1(k,C) \to H^1(k,G)\right)$. 
Using our assumption of Theorem~\ref{thm:intro_main_cor}, there is
$\zeta \in H^1(k,C)$ mapping to $\xi$.
Take an \'etale algebra $E/k$ with a normalized cohomological invariant 
$\alpha(k) : H^1(k,G) \to \operatorname{Br}(E)$. 

We show that $\alpha(k)(\xi) = 0$. Suppose otherwise. Factor 
\begin{displaymath}
	E = E_1 \times \cdots \times E_m
\end{displaymath}
where each $E_i/k$ is finite and separable. Then we have 
\begin{displaymath}
	\alpha(k)(\xi) = ([D_1],\ldots,[D_m]) 
\end{displaymath}
for central simple algebras $D_i/E_i$.
Without loss of generality $D_1$ is not split.
 
Let us extend scalars by $E_1$.  Since $E_1$ is a direct summand
of $E_1 \otimes_k E_1$, the induced composition
\[
\Br(E_1) \to \Br(E) \to \Br(E \otimes_k E_1) \to \Br(E_1)
\]
factors through the identity on $\Br(E_1)$.
Thus, the composition
\[
H^1(E_1,C_{E_1}) \to H^1(E_1,G_{E_1}) \to \Br(E \otimes_K E_1) \to \Br(E_1)
\]
gives rise to a non-trivial normalized cohomological invariant.
Since $C_{E_1}$ is coflasque, it has trivial Picard group
\cite[Proposition 2.6]{CT2008}.
Thus, by the Blinstein-Merkurjev Theorem~\cite[Theorem 2.4]{Blinstein},
all normalized cohomological invariants are trivial.
This is a contradiction, so $\alpha(k)(\xi)=0$ as desired.
	
For the other direction, take $\xi \in \Zhe^1_{Br}(k,G)$. Suppose we
have a cohomological invariant 
\begin{displaymath}
	\alpha : H^1(-,G) \to H^2(-,P) \cong H^2(-,\operatorname{Br}(E))
\end{displaymath}
where $E/k$ is the \'etale algebra for the quasi-trivial torus $P$.
Since $\alpha(k)(\xi) = 0$, we can lift $\xi$ to some $\zeta$ in $H^1(k,C)$.
\end{proof}

Having reduced to Theorem~\ref{thm:intro_main_cor}, the remainder of
this section is devoted to proving that result. 
From \S{23}~of~\cite{BOI}, we recall some standard facts about
automorphisms of separable algebras.
Let $A$ be a separable algebra over $k$ with center $Z(A)$
(an \'etale algebra over $k$).
Recall that the connected component $\Aut_k(A)^\circ$ of the group
scheme of algebra automorphisms of $\Aut_k(A)$ is the kernel of the restriction
map $\Aut_k(A) \to \Aut_k(Z(A))$.

We have an exact sequence
\begin{equation} \label{eq:GLses}
1 \to \GL_1(Z(A)) \to \GL_1(A) \to \Aut_k(A)^\circ \to 1
\end{equation}
where $\GL_1(B)$ is the group scheme of units of a $k$-algebra $B$
(this is a consequence of the Skolem-Noether theorem).
We define $\PGL_1(A)$ as the quotient
$\GL_1(A)/\GL_1(Z(A)) \cong \Aut_k(A)^\circ$.

\begin{lemma} \label{lem:PGL_from_Brauer}
Suppose there is a central extension of algebraic groups
\[
1 \to S \to H \to G \to 1
\]
and a homomorphism $m : S \to P$ where $P$ is a quasi-trivial torus.
Then there exists a separable algebra $A$ such that $P \cong
\GL_1(Z(A))$
and there is commutative diagram
\[
\SelectTips{cm}{10}\xymatrix{
1 \ar[r] & S \ar[r] \ar[d]^m & H \ar[r] \ar[d] & G \ar[r] \ar[d] & 1 \\
1 \ar[r] & P \ar[r] & \GL_1(A) \ar[r] & \PGL_1(A) \ar[r] & 1 \\
}
\]
with exact rows.
\end{lemma}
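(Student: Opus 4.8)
The plan is to produce $A$ as a product of matrix algebras over fields and to realize $H$ — after a harmless pushout — as the pullback of $1 \to \GL_1(Z(A)) \to \GL_1(A) \to \PGL_1(A) \to 1$ along a ``projective representation'' of $G$, from which the required map $H \to \GL_1(A)$ arises tautologically. First I would make two reductions. Since $P$ is quasi-trivial, $P \cong R_{E/k}\gm = \GL_1(E)$ for an \'etale $k$-algebra $E = L_1 \times \cdots \times L_r$; writing $m = (m_1,\dots,m_r)$ with $m_i\colon S \to R_{L_i/k}\gm$ and forming the product of the data produced for each index at the end (a product of short exact sequences stays exact), it suffices to treat $E = L$, a separable field extension of $k$. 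Replacing the extension $H$ by its pushout $H'$ along $m$ — so that the eventual $H \to \GL_1(A)$ will be the composite of the canonical pushout homomorphism $H \to H'$ (which is the identity on $G$ and restricts to $m$ on $S$) with a map $H' \to \GL_1(A)$ — I may assume $S = P = R_{L/k}\gm$ and $m = \id$, so that I am handed a central extension $1 \to R_{L/k}\gm \to H \to G \to 1$. By the adjunction between Weil restriction and base change (Shapiro's lemma for $\Ext$-groups of algebraic groups), its class corresponds to a central extension $1 \to \gm \to \underline{H} \to G_L \to 1$ of algebraic groups over $L$.

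The core step is to realize $\underline{H}$ by a projective representation over $L$. Choose a faithful representation $\rho\colon \underline{H} \to \GL(W)$ on a finite-dimensional $L$-vector space and decompose $W = \bigoplus_n W_n$ into weight spaces for the central subgroup $\gm \subseteq \underline{H}$; faithfulness of $\rho$ forces $\gcd\{\,n : W_n \neq 0\,\} = 1$. Writing $1$ as a $\Z$-linear combination of the occurring weights exhibits a nonzero subspace of pure $\gm$-weight $1$ inside a suitable tensor construction $T = W^{\otimes a}\otimes(W^{\vee})^{\otimes b}$, so the full $\gm$-weight-$1$ subspace $V \subseteq T$ is nonzero; and $V$ is $\underline{H}$-stable because $\underline{H}$ commutes with the central $\gm$. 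On $V$ the torus $\gm$ acts by scalars, hence $\underline{H} \to \GL(V)$ descends to $\phi\colon G_L \to \PGL(V)$. I would then verify that the canonical homomorphism $\underline{H} \to G_L \times_{\PGL(V)} \GL(V)$ is an isomorphism: its scheme-theoretic kernel is trivial (on $\gm \subseteq \underline H$ the map to $\GL(V)$ is the closed immersion $t \mapsto t\cdot\id_V$, while $\underline H/\gm \cong G_L$), so it is a closed immersion, and it is surjective since both sides are smooth, connected, and of dimension $\dim G_L + 1$. Equivalently, the pullback of $1 \to \gm \to \GL(V) \to \PGL(V) \to 1$ along $\phi$ is $\underline{H}$.

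To finish, set $A := \End_L(V)$, a separable $k$-algebra with center $L$ (indeed a matrix algebra over $L$); then $\GL_1(Z(A)) \cong R_{L/k}\gm \cong P$, $\GL_1(A) \cong R_{L/k}\GL(V)$, and $\PGL_1(A) \cong R_{L/k}\PGL(V)$. Weil-restricting $\phi$ and precomposing with the unit $G \to R_{L/k}G_L$ gives $\widetilde{\phi}\colon G \to \PGL_1(A)$, and by compatibility of Weil restriction with fibre products together with the same $\Ext$-adjunction, the pullback of $1 \to P \to \GL_1(A) \to \PGL_1(A) \to 1$ along $\widetilde{\phi}$ is precisely $H'$. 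Composing the resulting isomorphism with the pushout map $H \to H'$ yields the desired commutative diagram with exact rows, and taking products over $i = 1,\dots,r$ returns to the general quasi-trivial $P$.

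The step I expect to require the most care is the two-way translation through Weil restriction — pinning down that the pullback extension along $\widetilde{\phi}$ really is $H'$, i.e.\ that the adjunction $\Ext^1_k(G,R_{L/k}\gm) \cong \Ext^1_L(G_L,\gm)$ is realized exactly this way — together with checking over an arbitrary (possibly imperfect) field that $\underline{H} \to G_L \times_{\PGL(V)} \GL(V)$ is an isomorphism of group schemes rather than merely a bijection on geometric points. The representation-theoretic input, namely the existence of the weight-$1$ subrepresentation, is elementary once the gcd condition is in hand.
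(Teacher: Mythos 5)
Your proposal is correct and follows essentially the same route as the paper: push out along $m$, reduce to $P=R_{L/k}\gm$, transfer to a central extension of $G_L$ by $\gm$ via the counit (your $\underline{H}$ is exactly the paper's pushout $J$), find a subrepresentation on which the central $\gm$ acts by weight-one scalars so that the extension becomes a pullback of $\GL(V)\to\PGL(V)$, and transport back using Weil restriction and the unit --- the ``adjunction realized exactly this way'' that you flag as delicate is verified in the paper by nothing more than the unit--counit triangle identity, just as you anticipate. The only genuine divergence is the $\gm$-case itself: the paper picks an isotypic component whose central character is claimed to be faithful (hence $t\mapsto t^{\pm1}$), while your gcd-of-weights argument with the tensor construction $W^{\otimes a}\otimes(W^{\vee})^{\otimes b}$ produces the weight-one subrepresentation without that assumption, which is the more robust way to handle arbitrary central weights.
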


\begin{proof}
By taking the pushout of $H$ along $S \to P$,
we may assume that $S=P$ and the morphism $S \to P$ is the identity.

We begin by proving the lemma in the case where $S=\gm$.
The restriction
\[\phi: k[G] \to k[S] = k[t,t^{-1}]\]
is surjective. 
Let $W$ be the inverse image $\phi^{-1}(\langle t \rangle)$
where $\langle t \rangle$ is the $k$-module spanned by $t$.
Since $S$ is central, the natural action of $G$ on $k[G]$ restricts to $W$. 
Any element of $W$ lies in some finite-dimensional 
subrepresentation. Pick some finite-dimensional non-trivial 
sub-representation of $W$ and denote it by $V$. 
The theorem follows, in our current case where $S=P=\gm$, if we 
set $A = \op{End}(V)$. 

We now consider the general case where $S=P$ is quasi-trivial.
It suffices to assume that $P=R_{K/k}\gm$
for a finite separable field extension $K/k$ of degree $n$.
Indeed, quasi-trivial tori are products of such tori;
so the general result follows by taking the product of the constructions.

Let $\pi : \Spec(K) \to \Spec(k)$ be the morphism corresponding to the
field extension $K/k$.
For brevity and clarity we will write $L(X)=X_K$ for $k$-varieties $X$
and $R(Y)=R_{K/k}(Y)$ for $K$-varieties $Y$,
which emphasizes that scalar extension, $L$, is a left adjoint to
Weil restriction $R$.
We have an adjoint pair, so we denote the counit by
$\epsilon : L R \to \id$
and the unit by $\eta : \id \to R L$.

Let $f : RL(\gm)=R_{K/k}(\G_{m,K}) \to H$ be the inclusion of $S$ into $H$.
Define the $K$-group $J$ as the pushout
\[
\xymatrix{
LRL(\gm) \ar[r]^{\epsilon_{L\gm}} \ar[d]^{L f} &
L(\gm)\ar[d]^g  \\
L(H) \ar[r]^h & J }
\]
with $g$ and $h$ the canonical maps.
Since the lemma has been proven for the case $S=\gm$, we have an embedding
$\rho : J \to \GL_{n,K}$ for some $n$ such that $\rho \circ g$ is the identity
on scalar matrices.

We have the following commutative diagram:
\[
\xymatrix{
RL(\gm) \ar[rr]^{\eta_{RL\gm}} \ar[d]^f & &
RLRL(\gm) \ar[rr]^{R\epsilon_{L\gm}}
\ar[d]^{RL f}  &&
RL(\gm) \ar[d]^{R g}  \\
H \ar[rr]^{\eta_H}  &&
RL(H) \ar[rr]^{R h}
& & R(J) }
\]
where the left square commutes due to naturality of $\eta$.
The top row composes to be the identity since
$R\epsilon \circ \eta R = \id$, see e.g. \cite[Theorem IV.1.1]{MacLane}.

Let $A$ be the $k$-algebra of $n \times n$ matrices over $K$.
Since $R_{K/k}(\GL_{n,K})$ is canonically isomorphic to
$\GL_1(A)$,
the composition
\[
R(\rho \circ h) \circ \eta_H : H \to R_{K/k}(\GL_{n,K})
\]
gives the desired map.
The isomorphism $S \to Z(\GL_1(A))$ is given by the top
row of the diagram above.
\end{proof}

With this technical lemma in hand, we are finally able to prove
Theorem~\ref{thm:intro_main_cor}.

\begin{proof}[Proof of Theorem~\ref{thm:intro_main_cor}]
We have already shown that the sets (b) through (e) in
Theorem~\ref{thm:big_equivalence} are equivalent.
Thus we may assume
\[
\ICP_{qt}(k,G) =
\im\left(H^1(k,C) \to H^1(k,G)\right)
\]
where $1 \to P \to C \to G \to 1$ is a coflasque resolution
of $G$.
We will show that
\[
\ICP_{qt}(k,G) \subseteq \ICP(k,G)
\]
and
\[
\ICP(k,G) \subseteq \im\left(H^1(k,C) \to H^1(k,G)\right).
\]

Suppose $x \in \ICP_{qt}(k,G)$.
Let $A$ be an algebra with a group action
$\alpha : G \to \Aut(A)$.
Recall that if $H$ is a linear algebraic group, then
$H^0(-,H) \to H^0(-,\pi_0(H))$ is surjective
by \cite[Theorem 6.5]{Waterhouse}.
Thus, $H^1(-,H^\circ) \to H^1(-,H)$ is injective.
Thus we may assume $\alpha : G \to \Aut(A)^\circ$ instead since
$G$ is connected.
We have a composition
\[
\beta \colon H^1(k,G) \xrightarrow{\partial_{\alpha}}
H^1(k,\Aut(A)^\circ) \hookrightarrow H^2(k,\GL_1(Z(A)))
\]
where the second arrow is injective by Hilbert 90.
In particular, this composition gives rise to a cohomological invariant
and thus $\beta(x)=0$ since $x \in \ICP_{qt}(k,G)$;
thus $\partial_{\alpha}(x)=0$.
We conclude that $x \in \ICP(k,G)$.

Suppose $x \in \ICP(k,G)$.
By Lemma~\ref{lem:PGL_from_Brauer}, 
there exists a separable algebra $A$ such that $P \cong
\GL_1(Z(A))$
and there is commutative diagram
\[
\SelectTips{cm}{10}\xymatrix{
1 \ar[r] & P \ar[r] \ar[d]^= & C \ar[r] \ar[d] & G \ar[r] \ar[d] & 1 \\
1 \ar[r] & P \ar[r] & \GL_1(A) \ar[r] & \PGL_1(A) \ar[r] & 1 \\
}
\]
with exact rows.
Applying Galois cohomology, we observe that the composition
\[
\alpha : H^1(k,G) \to H^1(k, \PGL_1(A)) \to H^2(k,P)
\]
is equal to the connecting homomorphism
\[
\partial : H^1(k,G) \to H^2(k,P) .
\]
Since $\partial(x)=\alpha(x)=0$, we conclude that $x$ is in the image
of $H^1(k,C)$ as desired.
\end{proof}

\begin{remark} \label{rem:refX}
We thank the referee for their insight into simplifying the arguments 
relating Theorems~\ref{thm:intro_main_cor} and~\ref{thm:big_equivalence}.
\end{remark}

%%%%%%%%%%%%%%%%%%%%%%%%%%%%%%%%%%%%%%%%%%%%%%%%%%%%%%%%%%%%%%%%
%%%%%%%%%%%%%%%%%%%%%%%%%%%%%%%%%%%%%%%%%%%%%%%%%%%%%%%%%%%%%%%%
% INTERESTING FIELDS %
%%%%%%%%%%%%%%%%%%%%%%%%%%%%%%%%%%%%%%%%%%%%%%%%%%%%%%%%%%%%%%%%
%%%%%%%%%%%%%%%%%%%%%%%%%%%%%%%%%%%%%%%%%%%%%%%%%%%%%%%%%%%%%%%%

\section{Coflasque algebraic groups over particular fields}
\label{sec:applications}

\subsection{General statements and low cohomological dimension}

We first rephrase the case where $\ICP(-,G)$ is trivial. 

\begin{proposition} \label{prop:ICP_special}
Let $G$ be a reductive group. Then $\ICP(-,G)$ is trivial
if and only if $C$ is special,
where
\[
1 \to P \to C \to G \to 1
\]
is a coflasque resolution of $G$.
\end{proposition}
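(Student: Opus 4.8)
The plan is to prove the biconditional by unwinding the definition of $\ICP(-,G)$ as a functor together with the characterization in Theorem~\ref{thm:big_equivalence}(e), namely $\ICP(K,G_K) = \im(H^1(K,C_K) \to H^1(K,G_K))$ for every field extension $K/k$, using that base change of a coflasque resolution of $G$ is a coflasque resolution of $G_K$. Then $\ICP(-,G)$ is trivial as a functor if and only if $\im(H^1(K,C_K) \to H^1(K,G_K))$ is a single point for every $K/k$. By Proposition~\ref{prop:coflasque_resolution_independence}, the map $H^1(K,C_K) \to H^1(K,G_K)$ is \emph{injective} for every $K$, so this image is a single point if and only if $H^1(K,C_K)$ itself is a single point for every $K/k$, i.e.\ if and only if $C$ is special.

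So the argument reduces to two bookkeeping points. First I would check that if $1 \to P \to C \to G \to 1$ is a coflasque resolution over $k$, then for any field extension $K/k$ the base-changed sequence $1 \to P_K \to C_K \to G_K \to 1$ is a coflasque resolution of $G_K$: $P_K$ remains quasi-trivial and $C_K$ remains coflasque because the characteristic subgroups $C^{ss}$, $C^{tor}$ base-change correctly, and both the permutation property of $\widehat{P}$ and the coflasque property of $\widehat{C^{tor}}$ only depend on the Galois module structure after restricting along $\Gamma_K \hookrightarrow \Gamma_k$ (coflasqueness is inherited by passage to open — hence closed — subgroups, and more generally to the profinite completion situation for arbitrary field extensions via the standard restriction argument). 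Second, I would invoke Theorem~\ref{thm:big_equivalence} at each $K$ to replace $\ICP(K,G_K)$ by the coflasque-resolution image, and Proposition~\ref{prop:coflasque_resolution_independence} for injectivity, then observe the definition of "special" is precisely "$H^1(K, C_K)$ trivial for all $K/k$."

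The only genuine subtlety — and the step I expect to be the main obstacle — is the base-change compatibility of coflasque resolutions for a \emph{non-algebraic} field extension $K/k$, since then $\Gamma_K \to \Gamma_k$ is not simply an inclusion of an open subgroup. Here I would argue that quasi-triviality and coflasqueness are detected by vanishing of appropriate $\Ext^1_\Gamma$ groups against permutation/invertible lattices (Lemma~\ref{lemma:many_faces_flasque}), that these $\operatorname{Ext}$-vanishing conditions are stable under the restriction functor along any continuous homomorphism of profinite groups $\Gamma_K \to \Gamma_k$ (because a permutation $\Gamma_k$-lattice restricts to a permutation $\Gamma_K$-lattice, and similarly for invertible), and that the character lattice of $P_K$ (resp.\ $(C_K)^{tor}$) is exactly the restriction of $\widehat{P}$ (resp.\ $\widehat{C^{tor}}$) — while $(C_K)^{ss} = (C^{ss})_K$ stays semisimple simply connected since simple-connectedness is geometric. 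Granting this, the proof is a short chain: $\ICP(-,G)$ trivial $\iff$ $H^1(K,C_K)$ trivial for all $K$ $\iff$ $C$ special.
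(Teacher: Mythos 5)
Your argument is correct and is essentially the paper's proof: Theorem~\ref{thm:big_equivalence}(e) combined with the injectivity from Proposition~\ref{prop:coflasque_resolution_independence} gives an isomorphism of functors $\ICP(-,G)\cong H^1(-,C)$, and triviality of the latter for all extensions $K/k$ is precisely the definition of $C$ being special. The base-change compatibility of coflasque resolutions, which the paper leaves implicit in the phrase ``isomorphism of functors,'' is exactly the standard fact you verify (permutation, invertible, and coflasque lattices, as well as simple connectedness of $C^{ss}$, are preserved under restriction along $\Gamma_K\to\Gamma_k$), so your extra care there only makes the argument more explicit.
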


\begin{proof}
By Theorem~\ref{thm:intro_main_cor}, we have an isomorphism of functors
$\ICP(-,G) \cong H^1(-,C)$; the latter is trivial if and only if $C$ is
special by definition. 
\end{proof}

\begin{proposition}
 Let $T$ be a torus over $k$. Then, $\ICP(-,T)$
 is a stable birational invariant of $T$. Moreover, $\ICP(-,T)$ is trivial
 if and only if $T$ is retract rational.
\end{proposition}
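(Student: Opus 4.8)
The plan is to transport everything to the lattice side via Theorem~\ref{thm:main} and Cartier duality. Fix a flasque resolution $1 \to F \to Q \to T \to 1$ and a coflasque resolution $1 \to P \to C \to T \to 1$; because $T$ is a torus these are respectively a flasque resolution of the first type and a coflasque resolution of the second type, so the character lattices $\widehat{F}$, $\widehat{C}$ are flasque and coflasque while $\widehat{Q}$, $\widehat{P}$ are permutation. By Theorem~\ref{thm:main} together with the injectivity in Proposition~\ref{prop:coflasque_resolution_independence}, $\Zhe(-,T)$ is isomorphic, as a functor, to $H^1(-,C)$. The first observation I would record is that this functor depends (up to isomorphism) only on the similarity class $[\widehat{C}]$: if $[\widehat{C_1}] = [\widehat{C_2}]$, Cartier duality produces an isomorphism $C_1 \times Q_1 \cong C_2 \times Q_2$ with $Q_1, Q_2$ quasi-trivial tori, and since quasi-trivial tori have trivial $H^1$ over every extension (Proposition~\ref{prop:hilbert90_shapiro}), this gives $H^1(-,C_1) \cong H^1(-,C_2)$.

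The technical core is a single short exact sequence tying the two resolutions together. Push out the flasque resolution $0 \to \widehat{T} \to \widehat{Q} \to \widehat{F} \to 0$ along the inclusion $\widehat{T} \hookrightarrow \widehat{C}$ coming from $0 \to \widehat{T} \to \widehat{C} \to \widehat{P} \to 0$; the resulting lattice $N$ sits in two exact sequences
\[
0 \to \widehat{Q} \to N \to \widehat{P} \to 0 , \qquad 0 \to \widehat{C} \to N \to \widehat{F} \to 0 .
\]
The first splits since $\Ext^1_{\Gamma_k}(\widehat{P},\widehat{Q}) = 0$ by Lemma~\ref{lemma:many_faces_flasque} (a permutation lattice is both coflasque and invertible), so $N \cong \widehat{Q} \oplus \widehat{P}$ is permutation. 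From the second sequence, whenever $\widehat{F}$ or $\widehat{C}$ is invertible one has $\Ext^1_{\Gamma_k}(\widehat{F},\widehat{C}) = 0$ --- applying Lemma~\ref{lemma:many_faces_flasque} to $\widehat{C}$ coflasque and $\widehat{F}$ invertible in the first case, to $\widehat{F}$ flasque and $\widehat{C}$ invertible in the second --- so the sequence splits and $\widehat{C}$, $\widehat{F}$ become complementary summands of the permutation lattice $N$; hence $\widehat{C}$ is invertible iff $\widehat{F}$ is. Combining with Proposition~\ref{prop:ICP_special} ($\Zhe(-,T)$ trivial iff $C$ special), Proposition~\ref{prop:invertible_H1_vanish} ($C$ special iff $\widehat{C}$ invertible), and Theorem~\ref{theorem:torus_rationality} ($\widehat{F}$ invertible iff $T$ retract rational) settles the second assertion.

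For stable birational invariance, I would invoke the classical fact that stably birationally equivalent tori have similar flasque invariants, $[\widehat{F_1}] = [\widehat{F_2}]$ (Colliot-Th\'el\`ene--Sansuc \cite{CTS77}). By the first observation it then suffices to deduce $[\widehat{C_1}] = [\widehat{C_2}]$. Starting from the sequences $0 \to \widehat{C_i} \to N_i \to \widehat{F_i} \to 0$ with $N_i$ permutation produced above and enlarging the $N_i$ by permutation summands, we may assume $\widehat{F_1} \cong \widehat{F_2}$; then a Schanuel-type argument on the two surjections onto this common lattice --- the fiber product $N_1 \times_{\widehat{F_1}} N_2$ splits over $N_1$ and over $N_2$ because $\Ext^1_{\Gamma_k}(N_i, \widehat{C_j}) = 0$ ($N_i$ permutation hence invertible, $\widehat{C_j}$ coflasque) --- yields $N_1 \oplus \widehat{C_2} \cong N_2 \oplus \widehat{C_1}$, i.e.\ $[\widehat{C_1}] = [\widehat{C_2}]$.

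The main obstacle is mostly bookkeeping: since the flasque/coflasque symmetry is broken at the level of algebraic groups, one must consistently remember that the coflasque resolution of a torus is of the second type while the flasque resolution is of the first type, and feed each clause of Lemma~\ref{lemma:many_faces_flasque} a lattice of the appropriate flavor. The genuine inputs --- Theorem~\ref{thm:main}, Lemma~\ref{lemma:many_faces_flasque}, Theorem~\ref{theorem:torus_rationality}, and the stable birational invariance of the flasque class --- do the real work.
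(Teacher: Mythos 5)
Your argument is correct, but it runs along a genuinely different track from the paper's. The paper stays on the group side: it takes the fiber product of the coflasque resolution with an arbitrary extension $1 \to Q \to E \to T \to 1$ having invertible kernel, shows the resulting group is coflasque and splits as $C \times Q$, and deduces $\ICP(k,E) \cong \ICP(k,T)$; stable birational invariance then follows from Colliot-Th\'el\`ene--Sansuc's Lemme~1.8 (stably birational tori admit resolutions with a \emph{common middle term} and quasi-trivial kernels), and the direction ``retract rational $\Rightarrow$ trivial'' is the same fiber-product lemma applied to $1 \to Q \to P \to T \to 1$, while the converse goes through speciality of $C$ and retract rationality of $C$. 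You instead dualize and build the single linking sequence $0 \to \widehat{C} \to N \to \widehat{F} \to 0$ with $N$ permutation (the pushout of the flasque resolution along $\widehat{T} \hookrightarrow \widehat{C}$), which correctly yields ``$\widehat{C}$ invertible iff $\widehat{F}$ invertible'' via Lemma~\ref{lemma:many_faces_flasque}, and you get stable birational invariance from only the similarity of flasque classes \cite{CTS77} plus a Schanuel-type splitting argument showing $[\widehat{F}_1]=[\widehat{F}_2]$ forces $[\widehat{C}_1]=[\widehat{C}_2]$. Each route buys something: the paper's fiber-product lemma is a reusable invariance statement for any extension with special kernel and avoids comparing $[\widehat{C}]$ with $[\widehat{F}]$ at all, whereas your lattice argument treats both directions of the retract-rationality equivalence symmetrically and lands directly on $\widehat{F}$ being invertible, i.e.\ on retract rationality of $T$ itself via Theorem~\ref{theorem:torus_rationality} (the paper's converse ends with retract rationality of $C$ and leaves the passage back to $T$ implicit). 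Your steps all check out: the pushout is a lattice, both exact sequences you extract from it are the standard ones, and every $\Ext^1$-vanishing you invoke matches the correct clause of Lemma~\ref{lemma:many_faces_flasque}.
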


\begin{proof}
Let
\[
1 \to P \to C \to T \to 1
\]
be a coflasque resolution of $T$ of the second type.
Assume we have an exact sequence 
\begin{displaymath}
  1 \to Q \to E \to T \to 1
\end{displaymath}
with $Q$ invertible. 
Taking the fiber product we obtain a commutative diagram
\[
\xymatrix{
& & 1 \ar[d] & 1 \ar[d] \\
& & P \ar@{=}[r] \ar[d] & P \ar[d] \\
1 \ar[r] & Q \ar[r] \ar@{=}[d] & H \ar[d] \ar[r] & C \ar[r] \ar[d] & 1 \\
1 \ar[r] & Q \ar[r] & E \ar[r] \ar[d] & T \ar[r] \ar[d] & 1 \\
& & 1 & 1 & }
\]
with exact rows and columns.
Taking duals of the middle row we get a short exact sequence
\begin{displaymath}
  0 \to \widehat{C} \to \widehat{H} \to \widehat{Q} \to 0
\end{displaymath}
whose associated long exact sequence includes 
\begin{displaymath}
  H^1(\Gamma', \widehat{C}) \to H^1(\Gamma', \widehat{H}) \to H^1(\Gamma', \widehat{Q}) 
\end{displaymath}
for any $\Gamma' \leq \Gamma$. 
Since the outer two terms vanish, so does the middle. 
Thus, $H$ is coflasque. 
Additionally, since $C$ is coflasque and 
$Q$ is quasi-trivial, this extension splits 
$H \cong C \times Q$. Thus, the map 
\begin{displaymath}
  H^1(k,H) \to H^1(k,C) 
\end{displaymath} 
is an isomorphism. Applying Theorem~\ref{thm:intro_main_cor}, we see that 
\begin{displaymath}
  \ICP(k,E) \cong \ICP(k,T). 
\end{displaymath}

Assume that $T$ and $T^\prime$ are stably birational tori. 
Then their flasque invariants coincide 
\cite[Proposition 2.6]{CTS77} and there exist short exact sequences 
\begin{gather*}
 1 \to P \to E \to T \to 1 \\
 1 \to P^\prime \to E \to T^\prime \to 1
\end{gather*}
with both $P$ and $P^\prime$ quasi-trivial 
\cite[Lemme 1.8]{CTS77}. From the above, we see that 
\begin{displaymath}
  \ICP(k,T) \cong \ICP(k,E) \cong \ICP(k,T^\prime). 
\end{displaymath}

Assume that $T$ is retract rational. Then appealing to 
Theorem~\ref{theorem:torus_rationality} we have an exact sequence
\begin{displaymath}
  1 \to Q \to P \to T \to 1
\end{displaymath}
where $Q$ is invertible and $P$ is quasi-trivial. 
Thus, $\ICP(-,T) \cong \ICP(-,P)$. Since $P$ is 
quasi-trivial it is coflasque so $\ICP(-,P) = H^1(-,P)$
by Theorem~\ref{thm:intro_main_cor}. Proposition
~\ref{prop:hilbert90_shapiro} says the latter is trivial. 

Assume $\ICP(-,T)$ is trivial. From Proposition~\ref{prop:ICP_special}, 
$C$ is special. Then, from Corollary~\ref{prop:invertible_H1_vanish}
$C$ is invertible. Then there is a quasi-trivial torus 
$P$ with $P = C \times D$ so 
\begin{displaymath}
  1 \to D \to P \to C \to 1
\end{displaymath}
is a flasque resolution with $D$ invertible. Thus,
Theorem~\ref{theorem:torus_rationality} shows $C$ is retract rational. 
\end{proof}

From Proposition~\ref{prop:ICP_special}, understanding when $\Zhe(k,G)$ is trivial amounts to understanding
when a coflasque algebraic group is special.
When $k$ is perfect and of cohomological dimension $\le 1$, then \emph{all} torsors
of connected algebraic groups are trivial by Serre's Conjecture I
(now Steinberg's Theorem \cite[\S{III.2.3}]{SerreGC}).
Thus, we have:

\begin{proposition}
If $k$ is a field of cohomological dimension $\le 1$,
then $\ICP(k,G)=\ast$ for all reductive algebraic groups $G$.
In particular, this holds for finite fields.
\end{proposition}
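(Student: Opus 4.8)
The plan is to observe that the statement is immediate once $H^1(k,G)$ itself is known to be trivial. By its definition $\ICP(k,G)$ is an intersection of kernels of pointed maps with source $H^1(k,G)$, hence a subset of $H^1(k,G)$ containing the distinguished point; so it suffices to prove $H^1(k,G) = \ast$.

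Since $G$ is reductive it is, in particular, a connected linear algebraic group over $k$. When $k$ is perfect of cohomological dimension $\le 1$ --- for instance any finite field --- Serre's Conjecture I, proved by Steinberg \cite[\S{III.2.3}]{SerreGC}, gives $H^1(k,H) = \ast$ for every connected linear algebraic group $H$ over $k$; applying this to $G$ yields $H^1(k,G) = \ast$ and hence $\ICP(k,G) = \ast$. One could equally well route this through Theorem~\ref{thm:main}: a coflasque resolution $1 \to P \to C \to G \to 1$ identifies $\ICP(k,G)$ with the image of $H^1(k,C) \to H^1(k,G)$, and the coflasque reductive group $C$ --- an extension of a coflasque torus by a semisimple simply connected group --- is again connected, so $H^1(k,C) = \ast$ by the same citation.

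There is essentially no obstacle here: the proof is a one-line deduction from Serre's Conjecture I, and the only point requiring attention is the perfectness hypothesis under which that result is available. This is harmless for the advertised special case of finite fields, and one could alternatively phrase the proposition with the hypothesis ``$k$ perfect of cohomological dimension $\le 1$.''
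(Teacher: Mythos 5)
Your proof is correct and is essentially the paper's own argument: the paper likewise deduces the proposition from Steinberg's theorem (Serre's Conjecture I), noting that $\ICP(k,G)$ sits inside $H^1(k,G)$ (equivalently, equals the image of $H^1(k,C)$ for a coflasque resolution, and $C$ is again connected). Your caveat about perfectness is the same one implicit in the paper's citation of \cite[\S{III.2.3}]{SerreGC}; for \emph{reductive} groups Steinberg's theorem holds over any field of cohomological dimension $\le 1$, so the proposition as stated is fine.
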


In a more subtle manner, we may also leverage Serre's
Conjecture II:

\begin{conjecture}[Serre's Conjecture II]
If $k$ is a perfect field of cohomological dimension $\le 2$,
then $H^1(k,G)=\ast$ for all simply-connected semisimple algebraic groups.
\end{conjecture}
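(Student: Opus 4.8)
This is \emph{Serre's Conjecture II}, which is open in general, so the plan is not a proof from scratch but the standard reduction to absolutely almost simple groups together with a survey of the cases where it is known; the paper invokes it as a black box. First I would reduce to the case where $G$ is absolutely almost simple and simply connected. A simply connected semisimple $k$-group is a direct product of Weil restrictions $R_{L/k}(G')$ with $L/k$ finite separable and $G'$ absolutely almost simple and simply connected over $L$; Shapiro's lemma gives $H^1(k,R_{L/k}G') \cong H^1(L,G')$, and $\op{cd}(L) \le \op{cd}(k) \le 2$ since $L/k$ is finite separable, so it suffices to treat each factor over its own base field.

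One then proceeds type by type. For inner type $A_n$, $G = \op{SL}_1(A)$ for a central simple $k$-algebra $A$ and $H^1(k,G)$ is the cokernel of the reduced norm $A^\times \to k^\times$; its surjectivity over a field of cohomological dimension $\le 2$ follows from known facts about reduced norms and the Brauer group in the wake of the Merkurjev--Suslin theorem. For outer type $A_n$ and for types $B_n$, $C_n$, and $D_n$ (other than trialitarian $D_4$), $G$ is the special unitary, symplectic, or orthogonal group of a hermitian form, and triviality of $H^1$ follows from the classification of hermitian forms over fields of cohomological dimension $\le 2$ due to Bayer-Fluckiger and Parimala, whose methods also settle $G_2$ and $F_4$ via octonion and Albert algebras and the associated cohomological invariants. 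The trialitarian $D_4$ and the remaining exceptional types $E_6$, $E_7$, $E_8$ are attacked through the Rost invariant $H^1(k,G) \to H^3(k,\Q/\Z(2))$, which vanishes identically once $\op{cd}(k) \le 2$; one then needs this invariant to be injective, which is known in many but not all cases. Over a number field everything is subsumed by the Hasse principle for simply connected groups (Kneser, Harder, and Chernousov for $E_8$) together with triviality at the non-archimedean places, where $\op{cd} \le 2$; this is the input behind the number-field applications in the introduction.

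The hard part will be this last family of exceptional groups: there is no auxiliary algebraic object---algebra, hermitian form, or composition algebra---that classifies torsors, so one must argue directly with cohomological invariants, and proving that the Rost invariant has trivial kernel is genuinely difficult (and for $E_8$ in small characteristic still open). For our purposes it is enough to invoke the conjecture in the cases where it is a theorem: combined with Theorem~\ref{thm:main} and Proposition~\ref{prop:ICP_special}, the vanishing of $H^1(k,C^{ss})$---and of its inner forms---for the simply connected part of a coflasque resolution $1 \to P \to C \to G \to 1$ shows that this part is invisible to $\ICP(k,G)$, reducing its study to the coflasque torus $C^{tor}$, where the flasque/coflasque machinery of Section~\ref{sec:coflasque} applies.
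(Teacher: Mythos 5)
The statement you were asked about is labeled a conjecture in the paper and is not proved there: the authors explicitly note that Serre's Conjecture II is open in general, cite Gille's survey for the known cases, and invoke it only conditionally (Proposition~\ref{prop:SerreIIapplication}) or over nonarchimedean local fields, where it is a theorem of Kneser. Your response---declining to prove it, reducing to absolutely almost simple factors via Weil restriction and Shapiro's lemma, and accurately surveying the known cases (reduced norm surjectivity for inner type $A$, Bayer-Fluckiger--Parimala for the remaining classical types and $G_2$, $F_4$, the Rost invariant for the exceptional types with $E_8$ still problematic, and Kneser--Harder--Chernousov over number fields)---is correct and matches how the paper treats the conjecture as a black box.
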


Note that Serre's conjecture II is still open in general, although many
cases are known (see the survey \cite{GilleSurvey}).
In particular, the conjecture is proved for non-archimedean local fields
\cite{Kneser1,Kneser2}.

\begin{proposition} \label{prop:SerreIIapplication}
Suppose $k$ is a field for which the conclusion of Serre's Conjecture II holds.
Let $C$ be a coflasque reductive algebraic group over $k$ and consider the exact
sequence
\[
1 \to C^{sc} \to C \to C^{tor} \to 1
\]
where $C^{sc}$ is the derived subgroup of $C$ and $C^{tor}$ is the
abelianization.
Then the induced map $H^1(k,C) \to H^1(k,C^{tor})$ is injective.
\end{proposition}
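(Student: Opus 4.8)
The plan is to run the standard twisting argument for the exact sequence
\[
1 \to C^{sc} \to C \to C^{tor} \to 1
\]
of algebraic $k$-groups, reducing the injectivity of $H^1(k,C) \to H^1(k,C^{tor})$ to the triviality of $H^1(k,-)$ on the twisted fibres, and then to observe that those fibres are semisimple simply connected groups, so that the hypothesis on $k$ forces their first cohomology to be trivial. First I would record the relevant structure: since $C$ is coflasque, its derived subgroup $C^{sc}$ is by definition semisimple and simply connected; it is normal in $C$, so conjugation gives a homomorphism $C \to \Aut(C^{sc})$, while $C$ acts trivially on the abelian quotient $C^{tor}$. (This extension is not central, but that plays no role in the argument.)

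Given a class $\xi \in H^1(k,C)$, I would twist the displayed sequence by a cocycle representing $\xi$, using the conjugation action above, obtaining an exact sequence of $k$-groups
\[
1 \to \twist{C^{sc}}{\xi} \to \twist{C}{\xi} \to C^{tor} \to 1 ,
\]
with the last term unchanged because the action there is trivial. By the general twisting formalism in non-abelian Galois cohomology (see \cite[\S{I.5}]{SerreGC}), the fibre of $H^1(k,C) \to H^1(k,C^{tor})$ through $\xi$ corresponds, under the canonical bijection $H^1(k,\twist{C}{\xi}) \cong H^1(k,C)$ carrying the neutral class to $\xi$, to the image of $H^1(k,\twist{C^{sc}}{\xi}) \to H^1(k,\twist{C}{\xi})$. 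The crucial point is that $\twist{C^{sc}}{\xi}$ is again semisimple and simply connected: twisting modifies only the Galois action and leaves the geometric group $C^{sc}_{\overline{k}}$ unchanged, so $\twist{C^{sc}}{\xi}$ is a $k$-form of $C^{sc}$ and both properties are inherited. Hence, by the hypothesis that the conclusion of Serre's Conjecture II holds for $k$, we have $H^1(k,\twist{C^{sc}}{\xi}) = \ast$, so the fibre through $\xi$ is the single point $\xi$; as $\xi$ was arbitrary, $H^1(k,C) \to H^1(k,C^{tor})$ is injective.

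I expect the only delicate part to be the bookkeeping attached to the twisting: confirming that the conjugation map $C \to \Aut(C^{sc})$ is the correct one to twist by, that the twisted sequence remains exact over $k$ with the quotient term genuinely unchanged, and that $\twist{C^{sc}}{\xi}$ really is a $k$-form of the simply connected group $C^{sc}$ so that the Serre~II hypothesis applies to it verbatim. None of this interacts with the coflasque condition except through the single input that $C^{sc}$ is simply connected, so once the twisting is set up correctly the conclusion is immediate.
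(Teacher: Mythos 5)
Your argument is correct and is essentially the paper's own proof, just written out in full: the paper likewise observes that $C^{sc}$ is semisimple simply connected by the coflasque hypothesis, that any (inner) twist of it is again semisimple simply connected, and hence by the Serre~II hypothesis the fibres of $H^1(k,C) \to H^1(k,C^{tor})$ are trivial or empty. Your careful bookkeeping of the twisting formalism (unchanged abelian quotient, non-centrality being irrelevant) is exactly what the paper leaves implicit.
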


\begin{proof}
By definition, the derived subgroup $C^{sc}$ of a coflasque
reductive algebraic group is semisimple simply-connected.
Since any form of a simply-connected semisimple algebraic group
is simply-connected semisimple,
all fibers of the map $H^1(k,C) \to H^1(k,C^{tor})$
are trivial or empty.
\end{proof}

In the remainder of this section, our goal is to
understand $\Zhe(k,G)$ over number fields.
We begin with characterizations of coflasque algebraic groups over local fields.

\begin{lemma} \label{lem:coflasque_local}
If $C$ is a coflasque algebraic group over a nonarchimedean local field $k$,
then $H^1(k,C)=\ast$.
\end{lemma}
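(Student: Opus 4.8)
The plan is to reduce the vanishing of $H^1(k,C)$ for a coflasque group $C$ over a nonarchimedean local field $k$ to two inputs: Serre's Conjecture II, which is a theorem for such fields by the work of Kneser \cite{Kneser1,Kneser2}, and the corresponding vanishing statement for coflasque \emph{tori} over $k$. First I would invoke Proposition~\ref{prop:SerreIIapplication}: since $k$ is a nonarchimedean local field, Serre's Conjecture II holds, so the map $H^1(k,C) \to H^1(k,C^{tor})$ is injective, where $1 \to C^{sc} \to C \to C^{tor} \to 1$ is the canonical sequence with $C^{sc}$ semisimple simply-connected and $C^{tor}$ a coflasque torus. Thus it suffices to prove $H^1(k,C^{tor}) = \ast$, i.e. to handle the case where $C$ is a coflasque torus.

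For the torus case, the key fact is the Tate--Nakayama style duality for tori over local fields: for a $k$-torus $T$ with character lattice $\widehat{T}$, there is a perfect pairing between $H^1(k,T)$ and $H^1(k,\widehat{T})$ (the latter being Galois cohomology of the lattice, i.e. $H^1(\Gamma_k, \widehat{T})$), see e.g. \cite[Ch.~II]{SerreGC} or the standard references on local class field theory for tori. If $T$ is coflasque, then by definition $\widehat{T}$ is a coflasque $\Gamma_k$-lattice, so $H^1(\Gamma', \widehat{T}) = 0$ for every open subgroup $\Gamma' \subseteq \Gamma_k$; in particular $H^1(\Gamma_k, \widehat{T}) = 0$. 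By the duality pairing, $H^1(k,T)$ is then trivial as well. Combining this with the previous paragraph gives $H^1(k,C) = \ast$.

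The main obstacle is locating and correctly citing the local duality statement in the form needed: one must be careful that the ``$H^1$ of the lattice'' appearing on the dual side is genuinely the coflasque-vanishing group, and that the duality is valid for an arbitrary (not necessarily split or quasi-split) torus over a nonarchimedean local field of any characteristic. An alternative, more self-contained route that avoids quoting Tate--Nakayama duality directly would be to take a flasque resolution $1 \to S \to H \to T \to 1$ of the coflasque torus $T$ (Proposition~\ref{prop:flasqueOfCoflasque} or the torus case of the resolutions in Section~\ref{sec:tori}), use that $H$ is quasi-trivial so $H^1(k,H) = \ast$ by Proposition~\ref{prop:hilbert90_shapiro}, and then analyze the connecting map $H^1(k,T) \to H^2(k,S)$; but this merely trades one duality input for another ($H^2$ of a flasque torus over a local field), so I would favor citing the local duality for tori outright. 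Either way, the reductive case follows formally from the torus case via Proposition~\ref{prop:SerreIIapplication}, so no further work is needed there.
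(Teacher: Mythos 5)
Your proposal is correct and follows essentially the same route as the paper: reduce to the coflasque-torus case via Proposition~\ref{prop:SerreIIapplication} (Serre's Conjecture II being a theorem of Kneser for nonarchimedean local fields), then kill $H^1$ of the torus using local Tate--Nakayama duality together with the vanishing $H^1(\Gamma',\widehat{T})=0$ built into the coflasque condition. The paper's proof cites this duality in the form $H^1(\Gamma_{K/k},C(K)) \cong H^1(\Gamma_{K/k},\widehat{C})$ (via \cite[Theorem 11.3.5]{Vosky}), which is just the isomorphism version of the pairing you invoke.
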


\begin{proof}
By Proposition~\ref{prop:SerreIIapplication},
it suffices to assume $C$ is a coflasque torus.
Let $K/k$ be any Galois splitting field of $C$ with Galois group $\Gamma_{K/k}$.
From Tate-Nakayama duality, see e.g. \cite[Theorem 11.3.5]{Vosky}, we
have an isomorphism
\[
H^1(\Gamma_{K/k},C(K)) \cong H^1(\Gamma_{K/k}, \widehat{C}) = 0
\]
since $\widehat{C}$ is coflasque.
Thus $H^1(k,C)=0$ as desired.
\end{proof}

The archimedean case is more complicated.
For real tori, the notions of flasque, coflasque, and
quasi-trivial all coincide, so $H^1(\R,T) = \ast$ for a coflasque real torus $T$.
However, coflasque real algebraic groups can have non-trivial torsors. Thus $\Zhe(\R,G)$ may be non-trivial when $G$ is a not a torus.

\begin{ex}
 The group $\op{SL}_2(\mathbb{H}) \cong \op{Spin}(5,1)$ is simply-connected hence coflasque. However, 
 \begin{displaymath}
   |H^1(\mathbb{R}, \op{SL}_2(\mathbb{H}))| = 2
 \end{displaymath}
 from \cite[Section 10.1]{Adams}. Similarly, for the compact form of $E_8$, which is also simply-connected, we have 
 \begin{displaymath}
   |H^1(\mathbb{R},E_8)| = 3
 \end{displaymath}
 from \cite[Section 10.2]{Adams}.
\end{ex}

Nevertheless, from \cite{Borovoi}, the set $H^1(\R,G)$ has an explicit
combinatorial description for any reductive algebraic group $G$, so this
case can be explicitly computed. 

\subsection{Number fields}
\label{sec:number_fields}

We recall the \emph{Tate-Shafarevich group} of a linear algebraic group
(see, e.g., \cite[\S{7}]{PlatonovRapinchuk}).
If $G$ is a reductive algebraic group and $k$ is a number field, then
 \begin{displaymath}
   \Sha^i(k,G) := \op{ker} \left( H^i(k, G) \to \prod_v
H^i(k_v,G_{k_v}) \right), 
 \end{displaymath}
where the product is over all places $v$ of $k$.
The \emph{Tate-Shafarevich group} is the case where $i=1$,
which is an abelian group even if $G$ is not commutative.

For simply-connected algebraic groups, the Tate-Shafarevich group is trivial.
In fact, we have the following even stronger
result~\cite[Theorem~6.6]{PlatonovRapinchuk}:

\begin{theorem}[Kneser, Harder, Chernousov] \label{thm:KHC}
If $G$ is a simply-connected semisimple algebraic group over a number field $k$,
then the natural map
\[
H^1(k, G) \to \prod_{v \mathrm{\ real}} H^1(k_v,G_{k_v})
\]
is a bijection.
\end{theorem}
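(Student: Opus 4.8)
The plan is to recall the structure of the known proof, since a self-contained argument is out of the question here: the theorem is the Hasse principle for simply-connected semisimple groups over number fields, proved piece by piece by Kneser (classical types), Harder (exceptional types other than $E_8$), and Chernousov ($E_8$), and in a paper like this one simply cites it (as is done above). First I would note that the displayed map is essentially already the full adelic map: at a non-archimedean place $H^1(k_v,G_{k_v})=\ast$ for $G$ simply connected (Kneser's theorem over $p$-adic fields, a special case of Serre's Conjecture~II as recalled above), and $H^1(k_v,G_{k_v})=\ast$ trivially at a complex place; hence the map in question coincides with $H^1(k,G)\to\prod_{\text{all }v}H^1(k_v,G_{k_v})$, its injectivity is the statement $\Sha^1(k,G)=\ast$, and what remains is surjectivity.

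Next I would reduce to the case where $G$ is absolutely almost simple and simply connected. A simply-connected semisimple $k$-group is the direct product of its $k$-simple factors, and each such factor has the form $R_{K/k}(G')$ for a finite separable extension $K/k$ and an absolutely almost simple simply-connected group $G'$ over $K$. Shapiro's lemma gives $H^1(k,R_{K/k}G')=H^1(K,G')$ compatibly with localization, and a real place of $k$ contributes to the right-hand side only through the real places of $K$ above it, the complex ones contributing nothing; so both sides of the asserted bijection are compatible products over the absolutely almost simple simply-connected factors, and we may assume $G$ is of that kind.

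After that comes the case analysis by Dynkin type. For the classical types $A$, $B$, $C$, $D$ one realizes $G$, up to a central isogeny invisible on $H^1$ because $G$ is simply connected, as the relevant spinor or special unitary group of a central simple $k$-algebra with involution equipped with a hermitian form, so that $H^1(k,G)$ is read off from quadratic- and hermitian-form data; the Hasse principle then reduces to the Hasse--Minkowski theorem for quadratic forms, to Landherr's theorem and the Hasse principle for hermitian forms, and to the Eichler--Hasse--Schilling--Maass reduced-norm theorem for inner type $A$ --- where $H^1(k,\op{SL}_1(D))=k^\times/\mathrm{Nrd}(D^\times)$ and the reduced norms are exactly the elements positive at every real place ramifying $D$ --- together with its unitary analogue. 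This is Kneser's part. For $G_2$, $F_4$, ${}^1E_6$, ${}^2E_6$, and $E_7$ one still has a ``nonassociative'' model (octonion algebras, Albert algebras, Freudenthal triple systems) together with cohomological invariants (the mod-$2$ and mod-$3$ invariants, the Rost invariant with values in $H^3(k,\Q/\Z(2))$), through which the Hasse principle reduces to rigidity of these invariants over a number field, which in turn rests on the fact that $H^3(k,\Q/\Z(2))$ injects into the product over the real places of $k$. This is Harder's part.

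The genuinely hard case --- which I expect to be the main obstacle --- is $E_8$: there is no convenient linearization and no smaller faithful representation to exploit. Chernousov's argument works directly with the Rost invariant $r_G\colon H^1(k,G)\to H^3(k,\Q/\Z(2))$, which has order $60$ for $E_8$, and proves, by a careful descent through maximal-rank semisimple subgroups of $E_8$ (such as those of type $E_7 A_1$ and $D_8$) and an analysis of the fibers of $r_G$, that $\ker r_G$ satisfies the Hasse principle; combined with the structure of $H^3$ over a number field this yields injectivity, and surjectivity follows by a parallel but easier argument. The crux is precisely that $E_8$ escapes classical linear algebra and forces one to invoke the full machinery of the Rost invariant and its behavior over number fields.
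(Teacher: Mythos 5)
The paper gives no proof of this statement at all---it is quoted directly from Platonov--Rapinchuk \cite[Theorem~6.6]{PlatonovRapinchuk}---so your decision to treat it as a citation, supplemented by a survey of the actual proof, matches the paper's treatment exactly. Your outline (triviality of $H^1$ at finite and complex places, reduction via Weil restriction and Shapiro to the absolutely almost simple case, then Kneser's classical types, Harder's exceptional types, and Chernousov's $E_8$) is an accurate account of the literature, with the minor caveat that phrasing Chernousov's argument in terms of the Rost invariant is a modern gloss on his original subgroup-descent proof.
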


\begin{lemma}{\cite[Proposition 9.4(ii)]{CT2008}} \label{lem:Sha1to2}
Let $G$ be a reductive algebraic group over a number field.
Suppose
\[
1 \to S \to H \to G \to 1
\]
is a flasque resolution of $G$.
Then the connecting homomorphism induces
a bijection $\Sha^1(G) \cong \Sha^2(S)$.
\end{lemma}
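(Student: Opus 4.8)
The plan is to deduce the isomorphism $\Sha^1(G) \cong \Sha^2(S)$ from the long exact cohomology sequence attached to the flasque resolution $1 \to S \to H \to G \to 1$, combined with the local and global triviality properties of the quasi-trivial group $H$. Since the resolution is flasque, $H$ is quasi-trivial; in particular $H^{tor}$ is a quasi-trivial (hence special) torus and $H^{ss}$ is simply connected. First I would recall that for a quasi-trivial $H$ over a number field $k$, the Hasse principle holds: $\Sha^1(k,H) = \ast$. This follows by writing $1 \to H^{ss} \to H \to H^{tor} \to 1$, noting $\Sha^1(H^{ss}) = \ast$ by Theorem~\ref{thm:KHC} (a simply connected group has no nontrivial locally trivial torsors, as all its real torsors inject and the map there is a bijection onto the real part, so the kernel over all places is trivial), and $\Sha^1(H^{tor}) = \ast$ and even $\Sha^2(H^{tor}) = \ast$ since $H^{tor}$ is quasi-trivial (Shapiro plus $\Sha$ of $\gm$ over a number field being trivial by Albert--Brauer--Hasse--Noether). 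A short diagram chase over the fibration then gives $\Sha^1(k,H) = \ast$, and similarly $\Sha^2(k,H) \hookrightarrow \Sha^2(k,H^{tor}) = \ast$ wherever $H^2$ makes sense — more precisely one only needs that the map $H^2(k,H^{tor})$ receives $\Sha^2$ trivially, which is what we use.

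Next I would run the connecting map argument. For each place $v$ (including the archimedean ones) and for $k$ itself, the sequence $1 \to S \to H \to G \to 1$ yields exact sequences of pointed sets
\[
H^1(k,H) \to H^1(k,G) \xrightarrow{\partial} H^2(k,S) \to H^2(k,H),
\]
and likewise over each $k_v$, all fitting into a commutative ladder under restriction to the places. Since $S$ is central in $H$ (automatic, as $G$ is connected), $\partial$ is a group homomorphism and the sequence behaves well enough to chase. Taking the subgroups/subsets that die everywhere locally, I get
\[
\Sha^1(k,H) \to \Sha^1(k,G) \xrightarrow{\partial} \Sha^2(k,S) \to \Sha^2(k,H).
\]
By the previous paragraph $\Sha^1(k,H) = \ast$, so $\partial$ is injective on $\Sha^1(k,G)$. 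For surjectivity I need $\Sha^2(k,H) = \ast$: an element of $\Sha^2(k,S)$ mapping to $0$ in $\Sha^2(k,H)$ is the image of something in $H^1(k,G)$, and a local comparison argument (using that it already dies in $H^1(k_v,G)$ after adjusting by the local image of $H^1(k_v,H)$, which is possible since $\Sha^1$ controls exactly the failure of that) shows the preimage can be chosen in $\Sha^1(k,G)$. The vanishing $\Sha^2(k,H) = \ast$ comes from $\Sha^2(k,H) \hookrightarrow \Sha^2(k,H^{tor})$ — using that $H^{ss}$ is simply connected so contributes nothing in degree $2$ to the obstruction — together with $\Sha^2$ of a quasi-trivial torus being trivial over a number field.

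The main obstacle I anticipate is the bookkeeping in the surjectivity half: pointed-set exact sequences do not immediately give that a class in $\Sha^2(k,S)$ lifts to $\Sha^1(k,G)$ rather than merely to $H^1(k,G)$, so one must genuinely use the local triviality of $H$ to correct the lift place by place via a twisting/cocycle argument, and check that the correction is consistent globally (this is where $\Sha^1(k,H) = \ast$ is used a second time, in its role as the obstruction to patching local lifts). A clean way to package this is to observe that the fibers of $\partial\colon H^1(k,G)\to H^2(k,S)$ are orbits of $H^1(k,H)$, so the fiber over a class in $\Sha^2(k,S)$ meets $\Sha^1(k,G)$ as soon as it is nonempty and the corresponding local fibers are nonempty, the latter holding because $\Sha^2(k,S)$ restricts to $0$ at each place. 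Since $H$ is quasi-trivial, this is exactly the setting where Colliot-Th\'el\`ene's computations (cited as \cite[Proposition 9.4]{CT2008}) apply, and I would simply invoke that reference for the delicate patching, restricting my own write-up to the reduction steps above.
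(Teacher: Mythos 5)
The paper does not actually prove this lemma --- it cites \cite[Proposition 9.4(ii)]{CT2008} wholesale --- and your write-up ultimately does the same (``invoke that reference for the delicate patching''), which is circular: the proposition you would invoke \emph{is} the statement to be proved. Judged as a self-contained argument, the surjectivity half has a genuine gap. You work with a four-term sequence $H^1(k,H) \to H^1(k,G) \to H^2(k,S) \to H^2(k,H)$, but $H$ is not commutative, so $H^2(k,H)$ is not defined in ordinary Galois cohomology; the long exact sequence of a central extension stops at $H^2(k,S)$. Your fallback of replacing $H^2(k,H)$ by $H^2(k,H^{tor})$ ``since $H^{ss}$ is simply connected'' does not repair this: the composite $S \to H \to H^{tor}$ is in general not injective (in the flasque resolution $1 \to \gm \to \GL_n \to \PGL_n \to 1$ it is $z \mapsto z^n$), and exactness of $H^1(k,G) \to H^2(k,S) \to H^2(k,H^{tor})$ is not formal --- for $\PGL_n$ over a general field the image of $\partial$ is the set of classes of degree-$n$ algebras, which is strictly smaller than the kernel of multiplication by $n$ on $\Br(k)$. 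The real content of the lemma is exactly this step: every locally trivial class in $H^2(k,S)$ must be exhibited as the boundary of a global $G$-torsor, and of one that is itself locally trivial; this requires genuine arithmetic input (Colliot-Th\'el\`ene's argument, or Borovoi's abelianized/nonabelian $H^2$ machinery), not just the vanishing of $\Sha^2$ of the quasi-trivial torus $H^{tor}$.

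The injectivity half is also too quick. Since $H^1(k,G)$ is only a pointed set, ``trivial kernel'' must be upgraded to injectivity by a twisting argument, and even the trivial-kernel statement does not follow from $\Sha^1(k,H)=\ast$ alone: if $x \in \Sha^1(k,G)$ with $\partial x = 0$, a lift $y \in H^1(k,H)$ is trivial at all finite places (where $H^1(k_v,H)=\ast$ by Kneser and Hilbert 90), but at a real place $y_v$ only lies in the image of $H^1(k_v,S)$, so $y$ need not lie in $\Sha^1(k,H)$; one must correct $y$ by a global class in $H^1(k,S)$, which uses a surjectivity statement for the flasque torus $S$ at the real places (or a more careful use of the bijection $H^1(k,H) \cong \prod_{v\ \mathrm{real}} H^1(k_v,H)$ from Theorem~\ref{thm:KHC}). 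Relatedly, your parenthetical claim that the fibers of $\partial\colon H^1(k,G)\to H^2(k,S)$ are orbits of $H^1(k,H)$ is not correct as stated --- there is no such action; fibers are compared by twisting. In short, you have identified the right ingredients (local triviality of the quasi-trivial $H$ at finite places, Kneser--Harder--Chernousov at real places, Albert--Brauer--Hasse--Noether for the toric part), but the two key deductions are not actually carried out, and the only reference you lean on for them is the lemma itself.
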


Finally, we are in a position to prove our final result:

\begin{proof}[Proof of Theorem~\ref{thm:Zhe_number_field}]
Let 
\begin{displaymath}
  1 \to P \to C \to G \to 1
\end{displaymath}
be a coflasque resolution of $G$ and let 
\begin{displaymath}
  1 \to S \to H \to G \to 1
\end{displaymath}
be a flasque resolution of $G$. Setting $H' := C \times_G H$, we obtain
the following commutative diagram with exact rows
\begin{equation} \label{eq:both_resolutions}
\xymatrix{
& & 1 \ar[d] & 1 \ar[d] \\
& & P \ar@{=}[r] \ar[d] & P \ar[d] \\
1 \ar[r] & S \ar[r] \ar@{=}[d] & H' \ar[d] \ar[r] & C \ar[r] \ar[d] & 1 \\
1 \ar[r] & S \ar[r] & H \ar[r] \ar[d] & G \ar[r] \ar[d] & 1 \\
& & 1 & 1 & }
\end{equation}
where $P$ is a quasi-trivial torus, $S$ is a flasque torus,
$H'$ and $H$ are quasi-trivial algebraic groups and $C$ is a coflasque
algebraic group.
By Lemma~\ref{lem:Sha1to2}, the induced maps
\begin{displaymath}
  \Sha^1(k,G) \to \Sha^2(k,S)
\end{displaymath}
and
\begin{displaymath}
  \Sha^1(k,C) \to \Sha^2(k,S)
\end{displaymath}
are isomorphisms.
By Proposition~\ref{prop:flasqueOfCoflasque},
there is a flasque resolution of the first type
\begin{displaymath}
  1 \to S \to \left(H^\prime\right)^{tor} \to C^{tor} \to 1.
\end{displaymath}
Using Lemma~\ref{lem:Sha1to2} again, the induced map
$\Sha^1(k,C^{tor}) \to \Sha^2(k,S)$ is an isomorphism. 
Thus the morphism $\Sha^1(k,C) \to \Sha^1(k,C^{tor})$ is an isomorphism.

The task is to compute $\ICP(k,G)$, Since $\ICP(k,G) \cong H^1(k,C)$ by Theorem~\ref{thm:big_equivalence}, we must compute $H^1(k,C)$.
We start with the short exact sequence
\[
1 \to C^{sc} \to C \to C^{tor} \to 1 \ .
\]
We get a commutative diagram 
\[
  \SelectTips{cm}{10}\xymatrix{
  H^1(k,C^{sc}) \ar[r] \ar[d] & H^1(k,C) \ar[r] \ar[d] & H^1(k,C^{tor}) \ar[d] \\
  {\displaystyle \prod_v} H^1(k_v,C^{sc}) \ar[r] & {\displaystyle \prod_v} H^1(k,C) \ar[r] & {\displaystyle \prod_v} H^1(k_v,C^{tor}) }
\]
From Lemma~\ref{lem:coflasque_local}, we have 
\begin{displaymath}
  H^1(k_v,C) = H^1(k_v,C^{sc}) = H^1(k_v,C^{tor}) = \ast
\end{displaymath}
for any finite $v$; the same holds for complex $v$.
Since coflasque tori are quasi-trivial over $\R$,
we know $H^1(\R,C^{tor})=\ast$.
Thus, we reduce to the commutative diagram
\begin{equation} \label{eq:abelianization_in_disguise}
  \SelectTips{cm}{10}\xymatrix{
  H^1(k,C^{sc}) \ar[r] \ar@{=}[d] & H^1(k,C) \ar[r] \ar[d] & H^1(k,C^{tor}) \ar[d] \\
  {\displaystyle \prod_{v \textrm{ real}}} H^1(k_v,C^{sc}) \ar[r] & {\displaystyle
\prod_{v \textrm{ real}}} H^1(k_v,C) \ar[r] & \ast }
\end{equation}
with exact rows,
where the left vertical map is a bijection by Theorem~\ref{thm:KHC}.
In particular, the map  
\begin{displaymath}
  H^1(k,C) \to \prod_{v \textrm{ real}} H^1(k_v,C) 
\end{displaymath}
is surjective.

We obtain the commutative diagram
\[
\SelectTips{cm}{10}\xymatrix{
1 \ar[r] &
\Sha^1(k,C) \ar[r] \ar@{=}[d] &
H^1(k,C) \ar[r] \ar[d] &
{\displaystyle \prod_{v \textrm{ real}}} H^1(k_v,C) \ar[r] \ar[d] &
\ast
\\
1 \ar[r] &
\Sha^1(k,C^{tor}) \ar@{=}[r] &
H^1(k,C^{tor}) \ar[r] &
1}
\]
with exact rows.
We have a surjective function $H^1(k,C) \to \Sha^1(k,C^{tor})$
that has a canonical retract.

For any cocycle $\gamma \in Z^1(k,C)$, the twisted group
${}^\gamma C^{tor}$ is isomorphic to $C^{tor}$.
Thus $\Sha^1(k,C) \cong \Sha^1(k,{}^\gamma C)$
and we conclude all fibers of the map
\[
H^1(k,C) \to {\displaystyle \prod_{v \textrm{ real}}} H^1(k_v,C)
\]
are isomorphic.
Using Theorem~\ref{thm:intro_main_cor} and the isomorphism $\Sha^1(k,G) \cong \Sha^1(k,C)$, established above, we can rewrite the resulting direct product 
\begin{displaymath}
  \ICP(k,G) \cong H^1(k,C) \cong \Sha^1(k,C) \times \prod_{v \textrm{ real}} H^1(k_v,C) \cong \Sha^1(k,G) \times \prod_{v \textrm{ real}} \ICP(k_v,G).
\end{displaymath}
\end{proof}

\begin{remark}
M.~Borovoi pointed out that Theorem~\ref{thm:Zhe_number_field} can also
be understood using the abelian Galois cohomology group described in
\cite{BorovoiMemoir}.
Indeed, from \cite[Theorem 5.11]{BorovoiMemoir}, the commutative
square
\begin{equation} \label{eq:Borovoi}
 \SelectTips{cm}{10}\xymatrix{
H^1(k,C) \ar[r]^{\op{ab}^1} \ar[d] &
H^1_{\op{ab}}(k,C) \ar[d] \\
{\displaystyle \prod_{v \textrm{ real}} H^1(k_v,C)} \ar[r]^{\op{ab}^1} &
{\displaystyle \prod_{v \textrm{ real}} H^1_{\op{ab}}(k_v,C) }}
\end{equation}
is Cartesian for any reductive group $C$ over a number field.
In the case where $C$ is coflasque,
the maps
$\op{ab}^1 : H^1(-,C) \to H^1_{\op{ab}}(-,C)$ can be identified with the
maps $H^1(-,C) \to H^1(-,C^{tor})$ by \cite[Example 3.14]{BorovoiMemoir}.
One can then show that \eqref{eq:Borovoi} can be identified with the
right hand square from 
\eqref{eq:abelianization_in_disguise}.
\end{remark}

%%%%%%%%%%%%%%%%%%%%%%%%%%%%%%%%%%%%%%%%%%%%%%%%%%%%%%%%%%%%%%%%
%%%%%%%%%%%%%%%%%%%%%%%%%%%%%%%%%%%%%%%%%%%%%%%%%%%%%%%%%%%%%%%%
%%%%%%%%%%%%%%%%%%%%%%%%%%%%%%%%%%%%%%%%%%%%%%%%%%%%%%%%%%%%%%%%
% Appendix %
%%%%%%%%%%%%%%%%%%%%%%%%%%%%%%%%%%%%%%%%%%%%%%%%%%%%%%%%%%%%%%%%
%%%%%%%%%%%%%%%%%%%%%%%%%%%%%%%%%%%%%%%%%%%%%%%%%%%%%%%%%%%%%%%%
%%%%%%%%%%%%%%%%%%%%%%%%%%%%%%%%%%%%%%%%%%%%%%%%%%%%%%%%%%%%%%%%

\appendix
\section{Blinstein and Merkurjev's Theorem}
\label{sec:cohomological_invariants}

In this appendix, we prove Theorem~\ref{thm:superBM}.
As discussed above, this is a generalization of a result of
Blinstein and Merkurjev~\cite[Theorem 2.4]{Blinstein}.
In the case where $S=\gm$, this is proved by Lourdeaux~\cite{Lourdeaux}
or can be proved using \cite[Theorem~2.8]{BorovoiDemarche} by looking at
the details of Blinstein and Merkurjev's proof.
However, we will work with a special torus $S$ throughout.

First, we point out an application.
Namely, every cohomological invariant 
$\alpha : G \to \operatorname{Br}(E)$ arises as a composition of a
projective representation $G \to \operatorname{PGL}_1(E)$ 
via a connecting homomorphism in Galois cohomology. 

\begin{proposition} \label{lem:4flavors}
Let $G$ be a reductive algebraic group $G$ defined over a field $k$.
Let
\[
1 \to P \to C \to G \to 1
\]
be a coflasque resolution of $G$.
For any special torus $S$ and any normalized cohomological invariant
$\alpha \in \op{Inv}^2_\ast(G,S)$,
there exists a group homomorphism $f\colon P \to S$ such that
$\alpha$ is equal to the composite
\[
H^1(k,G) \xrightarrow{\partial_C} H^2(k,P) \xrightarrow{f_\ast} H^2(k,S)
\]
and $\ker(\alpha)$ contains the image of $H^1(k,C) \to H^1(k,G)$.
\end{proposition}

\begin{proof}
By Theorem~\ref{thm:superBM}, every $\alpha$ is obtained as a
connecting homomorphism from some extension
\begin{equation} \label{eq:BMappeal1}
1 \to S \to M \to G \to 1 \ .
\end{equation}
By Proposition~\ref{prop:coflasque_general_versal},
there exists a homomorphism $m : P \to S$ coming from a morphism of
extensions.
Applying Galois cohomology,
we obtain a commutative diagram with exact rows
\[
\SelectTips{cm}{10}\xymatrix{
H^1(k,C) \ar[r] \ar[d] & H^1(k,G) \ar[r] \ar@{=}[d] & H^2(k,P) \ar[d] \\
H^1(k,M) \ar[r] & H^1(k,G) \ar[r]^{\alpha(k)} & H^2(k,S) \\
}
\]
Thus $H^1(k,C)$ is in the kernel of $\alpha$ as desired.
\end{proof}

%%%%%%%%%%%%%%%%%%%%%%%%%%%%%%%%%%%%%%%%%%%%%%%%%%%%%%%%%%%%%%%%
%%%%%%%%%%%%%%%%%%%%%%%%%%%%%%%%%%%%%%%%%%%%%%%%%%%%%%%%%%%%%%%%
% SUBSECTION %
%%%%%%%%%%%%%%%%%%%%%%%%%%%%%%%%%%%%%%%%%%%%%%%%%%%%%%%%%%%%%%%%
%%%%%%%%%%%%%%%%%%%%%%%%%%%%%%%%%%%%%%%%%%%%%%%%%%%%%%%%%%%%%%%%

\subsection{Pairing extensions and torsors}

Let $G$ be a (connected) reductive algebraic group
and $S$ be a special torus.
We define a pairing
\begin{equation} \label{eq:Ext_pairing}
\beta : \Ext^1_k(G,S) \times H^1(k,G) \to H^2(k,S)
\end{equation}
via $\beta([H],[X]) := \partial_H([X])$.
Fixing a $G$-torsor $X \to \Spec(k)$
representing a class in $H^1(k,G)$ we obtain a function
\[
\delta_X : \Ext^1_k(G,S) \to H^2(k,S)
\]
defined by $\delta_X([H])=\partial_H([X])$.

\begin{lemma}
The pairing $\beta$ is additive in the first variable.
In other words, for all $G$-torsors $X \to \Spec(k)$,
the function $\delta_X$ is a group homomorphism. 
\end{lemma}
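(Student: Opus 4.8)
The plan is to compute $\delta_X$ on cochains and to track how the Baer sum acts on the resulting $2$-cocycle. First I would recall the standard cochain description of the connecting map of a central extension $1 \to S \to H \to G \to 1$: since $S$ is smooth and $\overline{k}$ is separably closed, $H(\overline{k}) \to G(\overline{k})$ is surjective, so a $1$-cocycle $\sigma \mapsto g_\sigma$ representing a class $[X] \in H^1(k,G)$ lifts to a (continuous) map $\sigma \mapsto h_\sigma \in H(\overline{k})$; then $c_{\sigma,\tau} := h_\sigma\, {}^\sigma h_\tau\, h_{\sigma\tau}^{-1}$ lies in the central subgroup $S(\overline{k})$, is a $2$-cocycle, and $\partial_H([X]) = [c]$ in $H^2(k,S)$. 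Along the way I would record two easy preliminaries: $\partial_H$ depends only on the equivalence class of $H$ (an isomorphism of extensions matches up the associated long exact sequences), so $\delta_X$ is well defined on $\Ext^1_k(G,S)$; and $\delta_X$ sends the split extension to $0$, since lifting $g_\sigma$ to $(1,g_\sigma) \in S \times G$ produces the trivial $2$-cocycle.

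Next I would prove additivity directly using the usual model of the Baer sum. Given central extensions $H_1, H_2$ of $G$ by $S$, recall that $H_1 + H_2$ is the quotient of the fibre product $H_1 \times_G H_2$ — itself a central extension of $G$ by $S \times S$ — by the antidiagonal copy of $S$, with $S \hookrightarrow H_1 + H_2$ induced by either factor. Fixing a cocycle $(g_\sigma)$ for $X$ and lifts $h^{(i)}_\sigma \in H_i(\overline{k})$, the pair $(h^{(1)}_\sigma, h^{(2)}_\sigma)$ lifts $g_\sigma$ in $H_1 \times_G H_2$, hence its image $\overline{h}_\sigma$ lifts $g_\sigma$ in $H_1 + H_2$. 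A one-line computation identifies the $2$-cocycle of $\overline{h}$ with the image of $(c^{(1)}_{\sigma,\tau}, c^{(2)}_{\sigma,\tau})$ under the codiagonal (multiplication) $S \times S \to S$, i.e. with $c^{(1)}_{\sigma,\tau}\, c^{(2)}_{\sigma,\tau}$, where $c^{(i)}$ is the $2$-cocycle attached to $H_i$ via $h^{(i)}$. Passing to cohomology classes (written additively in $H^2(k,S)$) gives $\delta_X([H_1] + [H_2]) = [c^{(1)}] + [c^{(2)}] = \delta_X([H_1]) + \delta_X([H_2])$. Together with $\delta_X(0) = 0$, this shows $\delta_X$ is a group homomorphism, and in particular $\delta_X(-[H]) = -\delta_X([H])$.

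If a more structural argument is preferred, the same conclusion follows by writing the Baer sum as $\nabla_* \Delta^*(H_1 \times H_2)$ and combining two elementary naturality properties of $\partial$ — compatibility with pushout along a homomorphism $S \to S'$ and compatibility with pullback along a homomorphism $G' \to G$ — together with $\partial_{H_1 \times H_2} = \partial_{H_1} \times \partial_{H_2}$ under $H^2(k,S\times S) \cong H^2(k,S)^2$ and $\Delta_*[X] = ([X],[X])$; each naturality statement is proved by the identical cochain manipulation. I do not anticipate a serious obstacle: the content is entirely the bookkeeping of non-abelian $1$-cochains for $G$ while keeping the correction terms inside the central subgroup $S$, so that the $2$-cocycle relations are meaningful and manifestly additive. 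The one point demanding care is the identification of the image of $(c^{(1)}, c^{(2)})$ in $H_1 + H_2$ with the product $c^{(1)} c^{(2)} \in S$ — one must be precise about which copy of $S$ is used and about the antidiagonal identification — but this is routine once the model of the Baer sum is fixed.
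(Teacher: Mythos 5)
Your argument is correct, but your primary route differs from the paper's. You work at the cochain level: lift a $1$-cocycle $(g_\sigma)$ for $X$ to $h^{(i)}_\sigma \in H_i(\overline{k})$ (legitimate since $S$ is a torus, hence smooth, so $H_i(\overline{k}) \to G(\overline{k})$ is surjective, and continuity of the lifts can be arranged), form the central $2$-cocycles $c^{(i)}_{\sigma,\tau}$, and check that in the model $H_1+H_2 = (H_1\times_G H_2)/S^{\mathrm{antidiag}}$ the induced lifts produce the cocycle $c^{(1)}c^{(2)}$; the identification $[(c^{(1)},c^{(2)})] = [(c^{(1)}c^{(2)},1)]$ via the antidiagonal relation is exactly the point you flag, and it goes through. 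The paper instead argues structurally and never chooses cocycles: it writes the Baer sum as the pushout along $\mu : S\times S \to S$ of the pullback $K = \Delta^\ast(H\times H')$ and chases the chain of equalities $\mu_\ast(\partial_H([X]),\partial_{H'}([X])) = \mu_\ast\partial_{H\times H'}([X\times X]) = \mu_\ast\partial_K([X]) = \partial_{H''}([X])$, i.e.\ precisely the ``more structural argument'' you sketch in your final paragraph (note $\nabla_\ast\Delta^\ast(H_1\times H_2)$ is the same as quotienting the fibre product by the antidiagonal $S$, so the two models agree). The trade-off: the paper's version needs only the naturality of connecting maps under pushout and pullback and so avoids all bookkeeping with non-abelian $1$-cochains and with $\overline{k}$-points of the quotient, while your explicit computation is more elementary and makes the additivity visible on the nose; either is acceptable here, and your preliminaries (well-definedness on equivalence classes, vanishing on the split extension) are correct though the paper does not spell them out.
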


\begin{proof}
Suppose $H$ and $H'$ are two extensions in $\Ext^1_k(G,S)$.
Let $H''$ denote the Baer sum of $H$ and $H'$.
Recall that this means there is an algebraic group $K$
and a commutative diagram with exact rows
\[
\xymatrix{
1 \ar[r] & S \times S \ar[r] &
H \times H' \ar[r] & G \times G \ar[r] & 1\\
1 \ar[r] & S \times S \ar[r] \ar@{=}[u] \ar[d]^{\mu} &
K \ar[u] \ar[r] \ar[d] & G \ar[u]^{\Delta} \ar[r] \ar@{=}[d] & 1\\
1 \ar[r] & S \ar[r] & H'' \ar[r] & G \ar[r] & 1}
\]
where $\Delta : G \to G \times G$ is the diagonal map
and $\mu : S \times S \to S$ is the multiplication.

Now we compute that
\begin{align*}
& \delta_X([H]) + \delta_X([H']) \\
=& \mu_\ast \left( \delta_X([H]), \delta_X([H']) \right)\\
=& \mu_\ast \left( \partial_H([X]), \partial_{H'}([X]) \right)\\
=& \mu_\ast \left( \partial_{H \times H'}([X \times X]) \right)\\
=& \mu_\ast \left( \partial_{K}([X]) \right)\\
=& \partial_{H''}([X]) = \delta_X([H'']) = \delta_X([H]+[H'])
\end{align*}
where $\mu_\ast : H^2(k,S^2)\to H^2(k,S)$ is induced from multiplication
and each $\partial$ is the connecting homomorphism from $H^1$ to $H^2$
for each exact sequence in the commutative diagram above.
Thus $\delta_X$ is a homomorphism as desired.
\end{proof}

Given a central extension $H$, we obtain a function $\beta(-,[H])$
from $H^1(k,G) \to H^2(k,S)$ that is functorial in $k$.
Thus there is a canonical group homomorphism
\begin{equation} \label{eq:BMmap}
\Ext^1_k(G,S) \to \op{Inv}^2_\ast(G,S) \ .
\end{equation}

%%%%%%%%%%%%%%%%%%%%%%%%%%%%%%%%%%%%%%%%%%%%%%%%%%%%%%%%%%%%%%%%
%%%%%%%%%%%%%%%%%%%%%%%%%%%%%%%%%%%%%%%%%%%%%%%%%%%%%%%%%%%%%%%%
% SUBSECTION %
%%%%%%%%%%%%%%%%%%%%%%%%%%%%%%%%%%%%%%%%%%%%%%%%%%%%%%%%%%%%%%%%
%%%%%%%%%%%%%%%%%%%%%%%%%%%%%%%%%%%%%%%%%%%%%%%%%%%%%%%%%%%%%%%%

\subsection{Torsors over torsors}
\label{sec:torsortorsor}

We recall Corollaire~5.7~from~\cite{CT2008}:

\begin{theorem}[Colliot-Th\'el\`ene] \label{thm:CTtorsor_to_group}
Let $G$ be a connected algebraic group and $S$ an algebraic group of multiplicative
type.
There is an exact sequence
\[
1 \to \Ext^1_k(G,S) \xrightarrow{\psi} H^1(G,S)
\xrightarrow{e^\ast} H^1(k,S) \to 1
\]
where $e : \Spec(k) \to G$ is the inclusion of the identity and
$\psi$ is the ``forgetful map'' that takes the class of a central extension
\[
1 \to S \to H \to G \to 1
\]
to the class of the $S$-torsor $H \to G$.
\end{theorem}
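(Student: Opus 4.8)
The plan is to split $H^1(G,S)$ into a ``constant'' summand pulled back from $\Spec k$ and a ``primitive'' summand, and to identify the latter with central extensions; we assume $G$ is linear, as it is in all our applications. Let $\pi\colon G\to\Spec k$ be the structure morphism. Since $\pi\circ e=\id$, the pullback $\pi^*\colon H^1(k,S)\to H^1(G,S)$ is a section of $e^*$, so $e^*$ is surjective and $H^1(G,S)=\pi^*H^1(k,S)\oplus\ker e^*$ canonically. It therefore suffices to prove that $\psi$ is injective with image exactly $\ker e^*$. That $\psi$ lands in $\ker e^*$ is immediate: the $S$-torsor $H\to G$ restricted along $e$ is the fibre over the identity, namely $S$ acting on itself, which is split.

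For the injectivity of $\psi$, suppose $1\to S\to H\xrightarrow{p}G\to1$ is a central extension whose underlying $S$-torsor $p$ is split, and pick an $S$-equivariant section $\sigma\colon G\to H$; after replacing $\sigma$ by $\sigma(-)\sigma(e)^{-1}$ we may assume $\sigma(e)=1$. The failure of $\sigma$ to be a homomorphism is a morphism $c\colon G\times G\to S$, $c(g,h)=\sigma(gh)\,\sigma(g)^{-1}\sigma(h)^{-1}$, which satisfies $c|_{G\times\{e\}}=c|_{\{e\}\times G}=1$. By Rosenlicht's unit theorem (applied over a splitting field of $S$ and descended to $k$), any morphism $G\times G\to S$ that is trivial at the identity is a group homomorphism; since $c$ is moreover trivial on each factor, $c=1$. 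Hence $\sigma$ is a homomorphism and the extension splits.

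It remains to prove $\ker e^*\subseteq\op{im}\psi$, which is the substance of the statement. Let $q\colon Y\to G$ be an $S$-torsor with $e^*[Y]=0$, and fix $y_0\in Y(k)$ lying over $e$. The key input is the ``theorem of the square'': for connected linear $G$, the restriction maps induce an isomorphism
\[
H^1(G\times G,S)\;\xrightarrow{\ \sim\ }\;H^1(G,S)\times_{H^1(k,S)}H^1(G,S),
\]
which for $S$ of multiplicative type reduces by d\'evissage to the corresponding statements for $\gm$ (namely $\Pic(G\times G)=\Pic G\oplus\Pic G$ and $\mathcal{O}^*(G\times G)/k^*=\mathcal{O}^*(G)/k^*\oplus\mathcal{O}^*(G)/k^*$; see Rosenlicht, Sansuc, Fossum--Iversen). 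Applying this to $z:=m^*[Y]-p_1^*[Y]-p_2^*[Y]$, whose restrictions to $G\times\{e\}$ and $\{e\}\times G$ vanish (using $m|_{G\times\{e\}}=m|_{\{e\}\times G}=\id_G$ and $e^*[Y]=0$), we get $z=0$, i.e.\ $m^*Y\cong p_1^*Y\wedge^S p_2^*Y$ as $S$-torsors on $G\times G$. Choose any such isomorphism, rigidified at $(e,e)$ so as to send $(y_0,y_0)\mapsto y_0$; this equips $Y$ with an $S$-biequivariant multiplication $Y\times Y\to Y$ over $m_G$ with unit $y_0$. Associativity holds because the two bracketings over $G\times G\times G$ differ by an automorphism of an $S$-torsor that is trivial at the identity and on the three coordinate factors, hence is trivial; an inverse is produced similarly. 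Thus $1\to S\to Y\to G\to1$ is a central extension with $\psi([Y])=[Y]$, so $\psi$ maps onto $\ker e^*$, completing the proof.

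The hard part is this last step: the ``theorem of the square'' for $H^1(G\times G,S)$, which rests on Rosenlicht's description of the units and the Picard groups of (products of) connected linear algebraic groups and is bootstrapped from $\gm$ to an arbitrary group of multiplicative type, together with the bookkeeping of the rigidifications needed to turn the primitivity datum into an honest group law. (In \cite{CT2008} this is deduced as a corollary of a more general result, which packages away much of the bookkeeping.)
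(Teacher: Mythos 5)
The paper does not actually prove this statement: it is quoted from Colliot-Th\'el\`ene \cite[Corollaire 5.7]{CT2008}, and your argument is in essence Colliot-Th\'el\`ene's own route (primitivity of the torsor over $G\times G$ plus Rosenlicht rigidity to build and normalize the group law), so the overall strategy is the right one; your restriction to linear $G$ also matches the hypotheses of the cited source. Two small points first. With $\sigma(e)=1$ your cochain $c(g,h)=\sigma(gh)\sigma(g)^{-1}\sigma(h)^{-1}$ maps under $H\to G$ to the commutator $ghg^{-1}h^{-1}$, so it does not take values in $S$ when $G$ is noncommutative; use $c(g,h)=\sigma(g)\sigma(h)\sigma(gh)^{-1}$ instead (and note that to pass from ``trivial kernel'' to injectivity you are implicitly using that $\psi$ is additive, i.e.\ that the Baer sum goes to the sum of torsor classes). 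Likewise, rigidifying the isomorphism $m^*Y\cong p_1^*Y\wedge^S p_2^*Y$ at the single point $(e,e)$ does not yet give the unit axiom: the maps $y\mapsto\tau(y,y_0)$ and $y\mapsto\tau(y_0,y)$ are a priori twists by characters $G\to S$ vanishing at $e$, and one must first correct $\tau$ by these characters (exactly the $\chi^{-1}\circ\tau$ adjustment in the proof of Theorem~\ref{thm:torsors_on_torsors}) before the associativity argument on $G^3$ applies.

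The substantive issue is the step you yourself flag as the hard part: the isomorphism $H^1(G\times G,S)\cong H^1(G,S)\times_{H^1(k,S)}H^1(G,S)$ (equivalently, that a class vanishing on $G\times\{e\}$ and $\{e\}\times G$ vanishes) for an \emph{arbitrary} group of multiplicative type $S$. The two $\gm$-statements you quote (Rosenlicht units and $\Pic(G\times G)=\Pic G\oplus\Pic G$) do not formally yield this by ``d\'evissage'': $S$ may have a finite, and in characteristic $p$ even infinitesimal, part, so one must work with fppf cohomology, track the unit-group and $H^2$ contributions in the Kummer/d\'evissage sequences, and the naive product decomposition genuinely fails for non-special $S$ (see the remark following Lemma~\ref{lem:Rosenlicht_Sansuc}), which is why the pointed fiber-product form is what has to be proved. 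This lemma is precisely where the content of \cite[\S 5]{CT2008} lies; as written your sketch assumes it rather than proves it, so either cite that result directly or supply the d\'evissage in full. With that input granted, the rest of your construction is correct.
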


In particular, if $S$ is special, then $\psi$ is an isomorphism.
Thus, if $H \to G$ is an $S$-torsor, then $H$ has a unique
structure of an algebraic group compatible with $G$ and $S$.
We will generalize this and see that an $S$-torsor over a $G$-torsor is
itself a torsor (for an appropriate algebraic group).

\begin{lemma} \label{lem:Rosenlicht_Sansuc}
Let $X$ and $Y$ be smooth varieties over $k$,
with $Y$ separably rational and $Y(k) \neq \emptyset$.  Let $S$ be a special torus.
Then the canonical homomorphism
\[
H^1(X,S) \times H^1(Y,S) \to H^1(X \times Y, S)
\]
is an isomorphism.
\end{lemma}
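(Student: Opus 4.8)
The plan is to reduce the statement to a known Künneth-type formula for étale cohomology with coefficients in a split torus $\gm$, then descend via the special hypothesis. The key observation is that $S$ is a special torus, so by Proposition~\ref{prop:invertible_H1_vanish} its character lattice $\widehat{S}$ is invertible, i.e.\ a direct summand of a permutation lattice; hence $S$ is a direct factor of a quasi-trivial torus $\prod_i R_{L_i/k}\gm$. Since $H^1(-,-)$ sends direct products of tori to direct sums and is additive on such summands, it suffices to prove the lemma when $S = R_{L/k}\gm$ for a finite separable extension $L/k$, and then by Shapiro's lemma (compatibly with the product $X \times Y$) one reduces further to $S = \gm$. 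So the real content is: for $X$, $Y$ smooth over $k$ with $Y$ separably rational and $Y(k)\neq\emptyset$, the canonical map $\Pic(X) \oplus \Pic(Y) \to \Pic(X\times Y)$ is an isomorphism.

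**First I would** treat the $\gm$ case using the standard exact sequence of Sansuc (Proposition~6.10 of \cite{Sansuc1981}) or the Rosenlicht-type argument: for a smooth variety $V$ over $k$ with a rational point, $\Pic(V)$ injects into $\Pic(\overline{V})^{\Gamma_k}$ with a controlled cokernel involving $\Br(k)$, and $\overline{k}[V]^\times/\overline{k}^\times$ is a finitely generated module. The rational point on $Y$ (and, after base change, on $X\times Y$ once $X$ has points — if $X(k) = \emptyset$ one argues over each residue field or uses the generic point of $X$, since $Y$ separably rational means $Y_{k(X)}$ is rational and $\Pic$ of a rational variety over a field is computed by the complement of the big open cell) lets one split off the pullback maps. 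The separable rationality of $Y$ is exactly what forces $\overline{k}[Y]^\times = \overline{k}^\times$ and $\Pic(\overline{Y}) = 0$ after shrinking, which kills the cross terms in the geometric Künneth formula $\Pic(\overline{X}\times\overline{Y}) \cong \Pic(\overline{X})\oplus\Pic(\overline{Y})$; combined with the Hochschild--Serre spectral sequence comparison for $X\times Y$ versus $X$ and $Y$ separately, the special hypothesis on $S$ makes the relevant $H^1(k,S)$ and $H^2(k,S)$ contributions match up on both sides.

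**The hard part will be** handling the descent spectral sequence bookkeeping when $X$ has no rational point: one cannot directly split $H^1(X\times Y,S) \to H^1(Y,S)$ via a section, so instead I would pass to the function field $k(X)$, use that $Y_{k(X)}$ is rational with a rational point, invoke the $\gm$-case (now genuinely with rational points on both factors) there, and then propagate back to $X$ using that $\Pic$ and the unit-group sheaf are well-behaved under the generic-to-global comparison for smooth $X$ — i.e.\ a line bundle on $X\times Y$ trivial on the generic fibre extends, up to a twist by $\mathrm{pr}_X^*$ of something, because $Y$ is rational so its Picard scheme is trivial. Assembling the $\gm$-statement, the Shapiro reduction, and the invertible-summand reduction into a single clean argument, while keeping track that all identifications are via the \emph{canonical} map and hence functorial (which is needed downstream for the torsor-over-torsor application), is the main technical burden; the individual ingredients are each standard.
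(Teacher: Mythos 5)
Your proposal follows essentially the same route as the paper: the reduction from a special torus $S$ to $\gm$ via invertibility of $\widehat{S}$ (so $S$ is a retract of a quasi-trivial torus), additivity over products, and Weil restriction/Shapiro is exactly the paper's argument, just run in the opposite order. The $\gm$-case you flag as the hard part is precisely \cite[Lemme 6.6]{Sansuc1981} (not Proposition 6.10, which is the exact sequence), and the paper simply cites it, so your Hochschild--Serre/generic-fibre sketch, while essentially the standard proof of that lemma, is not needed.
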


\begin{proof}
When $S=\gm$, recall that $\Pic(X)=H^1(X,S)$.
Thus the case of $S=\gm$ is exactly \cite[Lemme 6.6]{Sansuc1981}, which
states that the canonical map
\[
\Pic(X) \times \Pic(Y) \to \Pic(X \times Y)
\]
is an isomorphism under the same conditions on $X$ and $Y$.
For a finite separable field extension $L/k$, we have
a canonical isomorphism $H^1(X,S) \cong H^1(X_L,\gm)$
where $S = R_{L/k} \gm$ is the Weil restriction.
Thus, the lemma holds when $S$ is a Weil restriction.
Since there is a canonical isomorphism
\[ H^1(X,S \times S') \cong H^1(X,S) \times H^1(X,S') \]
for tori $S$ and $S'$, the lemma holds for quasi-trivial tori
(since they are precisely the products of Weil restrictions).
Recall that special tori correspond to invertible $\Gamma_k$-modules,
which are direct summands of permutation $\Gamma_k$-modules.
Thus for any special torus $S$, there exists a quasi-trivial
torus of the form $S \times S'$.
Since the composite $S \to S \times S' \to S$ is the identity,
functoriality of $H^1$ shows that the result holds for all special tori. 
\end{proof}

\begin{remark}
The hypothesis that $S$ is special is crucial.
For example, the lemma is false when $k=\R$,
$X = \op{Spec} \mathbb{C}$, $Y = \op{Spec} \mathbb{R}$,
and $S$ is the non-split real one-dimensional torus. 
\end{remark}

We will also require the following amplification of Rosenlicht's Lemma:

\begin{lemma} \label{lem:Rosenlicht}
If $X$ and $Y$ are varieties over $k$ and $S$ is a special torus,
then the canonical homomorphism
$\Hom_k(X,S) \times \Hom_k(Y,S) \to \Hom_k(X \times_k Y,S)$
is an isomorphism.
\end{lemma}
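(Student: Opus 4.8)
The plan is to reduce to the case $S=\gm$, where the statement is the classical lemma of Rosenlicht, and then bootstrap to an arbitrary special torus by exactly the Weil-restriction-and-retract argument already used for Lemma~\ref{lem:Rosenlicht_Sansuc}. For the bootstrap: first, $\Hom_k(Z,-)$ carries finite products of tori to products of abelian groups, so the canonical map attached to $S\times S'$ is the product of those attached to $S$ and to $S'$; hence the class of tori for which the lemma holds is closed under finite products. Second, for a finite separable extension $L/k$ the adjunction between base change and Weil restriction gives $\Hom_k(Z,R_{L/k}\gm)\cong\Hom_L(Z_L,\gm)$ naturally in $Z$, and since $(X\times_k Y)_L\cong X_L\times_L Y_L$ this identifies the canonical map for $R_{L/k}\gm$ over $k$ with the canonical map for $\gm$ over $L$; as quasi-trivial tori are finite products of such Weil restrictions, the $\gm$-case over all fields yields the lemma for every quasi-trivial torus. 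Finally, a special torus $S$ is a direct summand of some quasi-trivial torus $Q\cong S\times S'$, and since $S\hookrightarrow Q\twoheadrightarrow S$ is the identity, functoriality exhibits the canonical map for $S$ as a retract of the isomorphism for $Q$; a retract of an isomorphism is an isomorphism.

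It remains to treat $S=\gm$ --- and, because of the reduction just made, for arbitrary finite-type $k$-schemes rather than only for varieties, since $X_L$ need not stay integral. Here $\Hom_k(Z,\gm)=\mathcal{O}_Z(Z)^\times$ is the group of invertible global regular functions and the canonical map sends $(f,g)$ to $\mathrm{pr}_X^\ast f\cdot\mathrm{pr}_Y^\ast g$. Surjectivity is precisely Rosenlicht's lemma: an invertible function on a product of geometrically integral varieties is, up to a constant, a product of invertible functions pulled back from the two factors (see, e.g., \cite{Sansuc1981}). To remove the geometric-integrality hypothesis I would base change to $\overline{k}$: discarding nilpotents (which do not affect units), $\overline{X}$ and $\overline{Y}$ are finite disjoint unions of geometrically integral $\overline{k}$-varieties carrying a $\Gamma_k$-action, Rosenlicht applies on each product of components, and one recovers the statement over $k$ by taking $\Gamma_k$-invariants, using $\mathcal{O}_Z(Z)^\times=\big(\mathcal{O}_{\overline{Z}}(\overline{Z})^\times\big)^{\Gamma_k}$ and the $\Gamma_k$-equivariance of the canonical map. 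The kernel of the canonical map is the copy of $\gm(k)$ sitting anti-diagonally as constant functions, so the genuine isomorphism asserted in the lemma is the one induced on the Hom-groups modulo the constant maps $\Hom_k(\Spec k,S)$, exactly the units-modulo-constants form of Rosenlicht's theorem.

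The geometric heart is thus entirely classical; the work is bookkeeping, concentrated at two spots. The first is the copy of $\Hom_k(\Spec k,S)=S(k)$ present in each factor: the comparison is cleanest on the normalized (constant-free) Hom-groups, where, once $X$ has a rational point, one has a splitting $\Hom_k(X,S)\cong S(k)\times\big(\Hom_k(X,S)/S(k)\big)$ by constant maps. The second is the descent step, where one must verify that the decomposition of $\overline{X\times_k Y}$ into geometrically integral components is compatible with the product decomposition $\overline{X}\times_{\overline{k}}\overline{Y}$ and that the $\Gamma_k$-actions match; this is routine but is exactly where geometric non-connectedness or non-reducedness of $X$ or $Y$ could trip one up. I expect this descent and the constant-function bookkeeping to be the main (if modest) obstacle, the algebro-geometric input being entirely classical Rosenlicht.
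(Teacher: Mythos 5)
Your reduction to $S=\gm$ --- additivity of $\Hom_k(Z,-)$ in the torus variable, the Weil-restriction adjunction $\Hom_k(Z,R_{L/k}\gm)\cong\Hom_L(Z_L,\gm)$, and the retract $S\to Q\to S$ through a quasi-trivial torus --- is exactly the route the paper takes, and you are also right that the asserted isomorphism must be read modulo constants, i.e.\ as $k[X]^\times/k^\times\oplus k[Y]^\times/k^\times\to k[X\times_kY]^\times/k^\times$; that is precisely how the paper's proof interprets $\Hom_k(X,\gm)$.

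The gap is in your plan to establish the $\gm$-case ``for arbitrary finite-type $k$-schemes'' by decomposing $\overline{X}$ and $\overline{Y}$ into components and descending: that more general statement is \emph{false}, so no descent can recover it. Already over $\bar{k}$, take $\overline{X}=\overline{Y}=\Spec(\bar{k}\times\bar{k})$: then $\bar{k}[\overline{X}]^\times/\bar{k}^\times\oplus\bar{k}[\overline{Y}]^\times/\bar{k}^\times\cong(\bar{k}^\times)^2$ while $\bar{k}[\overline{X}\times\overline{Y}]^\times/\bar{k}^\times\cong(\bar{k}^\times)^3$, and the canonical map $((a_1,a_2),(b_1,b_2))\mapsto(a_ib_j)_{i,j}$ lands in the proper subgroup cut out by $u_{11}u_{22}=u_{12}u_{21}$. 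Applying Rosenlicht component-by-component does not reassemble into the global statement because the locally constant units of a disconnected scheme are not killed by the single quotient by $\bar{k}^\times$. The failure persists over the ground field: for $k=\R$ and $X=Y=\Spec\C$ the image of the canonical map consists only of classes of pairs $(u,v)\in\C^\times\times\C^\times$ with $|u|=|v|$. The correct repair is not to enlarge the class of schemes but to observe that the lemma is only true --- and, in Rosenlicht's lemma itself as well as in the sole application in Theorem~\ref{thm:torsors_on_torsors} (which takes place over $\bar{k}$ with torsors under connected groups) only needed --- for \emph{geometrically} integral $X$ and $Y$. Under that hypothesis $X_L$ and $Y_L$ remain (geometrically) integral for $L/k$ finite separable, so the Weil-restriction step goes through with no auxiliary descent, and your extra machinery is both unnecessary and, as stated, aimed at a false target.
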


\begin{proof}
Recall that $\Hom_k(X,\gm) = k[X]^\times/k^\times$ is just the group of
invertible regular functions on $X$.
Thus the result for $S=\gm$ is simply Rosenlicht's Lemma \cite[Lemme
10]{CTS77}.
For a finite separable field extension $L/k$,
we have $\Hom_k(X,R_{L/k}\gm) = \Hom_L(X_L,\gm)$.
Thus the result holds for Weil restrictions $S=R_{L/k}\gm$.
Since $\Hom(X,-)$ is additive in the second variable, it applies to
products of Weil restrictions; namely, all quasi-trivial tori $Q$.
Since special tori $S$ possess factorizations $S \to Q \to S$ of the
identity for some quasi-trivial torus, the result holds for all special
tori.
\end{proof}

The following is a variation on \cite[Lemma 2.13]{BorovoiDemarche}:

\begin{theorem} \label{thm:torsors_on_torsors}
Let $G$ be a reductive algebraic group over $k$ and suppose $S$ is a special 
torus over $k$. Let $X \to \op{Spec} k$ be a $G$-torsor,
and let $Y \to X$ be a $S$-torsor.
There exists a central extension
\[
1 \to S \to H \to G \to 1,
\]
unique up to isomorphism of extensions,
along with an $H$-action on $Y$ such that:

\begin{enumerate}
\item the composite $Y \to X \to \op{Spec} k$ is an $H$-torsor,
\item restriction of the $H$-action yields the existing $S$-action on $Y$,
and
\item the induced $H$-action on the quotient $X$ factors through the map
$H \to G$.
\end{enumerate}
\end{theorem}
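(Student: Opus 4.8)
The plan is to construct $H$ directly using the pushout/descent machinery already in place. First I would form the fiber product $Z := Y \times_{\Spec k} G$, which carries a natural $S$-torsor structure over $X \times_{\Spec k} G$. The key observation is that there are two maps $X \times_{\Spec k} G \to X$: the projection $p$ and the action map $a$; pulling back the $S$-torsor $Y \to X$ along each gives two $S$-torsors over $X \times_{\Spec k} G$, and I want to produce an isomorphism between them. To do this I would pull everything back along the point $e: \Spec k \to G$ and along $X \to \Spec k$, reducing to a comparison over $X$, and then invoke Lemma~\ref{lem:Rosenlicht_Sansuc} (applied with the separably rational $Y$-factor replaced by an appropriate open of $G$, after using that reductive groups are rational) to split the Picard/$H^1(-,S)$ group of a product. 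This is the step where the hypothesis that $S$ is special is essential and where the argument of \cite[Lemma 2.13]{BorovoiDemarche} is being adapted.

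Once I have the canonical isomorphism $p^\ast Y \cong a^\ast Y$ over $X \times_{\Spec k} G$, the next step is to check the cocycle condition over $X \times_{\Spec k} G \times_{\Spec k} G$, so that this isomorphism equips $Y$ with a genuine $G$-equivariant structure as an $S$-torsor over $X$ — in other words, descent data making $Y$ an $S$-torsor on the stack quotient $[X/G]$. The cocycle condition should again be verified by the rigidity afforded by Lemma~\ref{lem:Rosenlicht} (an automorphism of the trivial $S$-torsor over a product of varieties is constant on each factor in the relevant sense), so that two a priori different descent isomorphisms agreeing after restriction to a rational point must agree. Having such descent data is the same as an $S$-torsor over $[X/G]$; since $X \to \Spec k$ is a $G$-torsor, $[X/G] \cong BG$ over $k$, so this is an $S$-torsor over $BG$, i.e. an element of $H^1(G,S)$ in the sense of Theorem~\ref{thm:CTtorsor_to_group} after pulling back along $\Spec k \to BG$ — more precisely, the total space $Y \to \Spec k$ with its combined symmetries.

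I would then feed this into Theorem~\ref{thm:CTtorsor_to_group}: since $S$ is special, the forgetful map $\psi: \Ext^1_k(G,S) \to H^1(G,S)$ is an isomorphism, so the $S$-torsor structure on $Y$ over $[X/G]$ corresponds to a unique central extension $1 \to S \to H \to G \to 1$. Unwinding this correspondence gives precisely an $H$-action on $Y$: the extension $H$ is recovered as (a form of) the automorphisms of $Y$ over $\Spec k$ lifting the $G$-action on $X$ and commuting with $S$, and by construction $Y \to \Spec k$ becomes an $H$-torsor with $Y \to X$ the quotient by $S$ and $X$ the quotient by $H$ factoring through $H \to G$. Properties (a), (b), (c) then hold essentially by construction, and uniqueness of $H$ up to isomorphism of extensions follows from the uniqueness clause in Theorem~\ref{thm:CTtorsor_to_group} together with the rigidity lemmas (which pin down the descent data, hence the class in $H^1(G,S)$, up to the ambiguity measured by $H^1(k,S)$ — but that ambiguity is exactly what is quotiented out in passing to $\Ext^1_k(G,S)$, or is trivial because $S$ is special).

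The main obstacle I anticipate is the bookkeeping in the descent/cocycle step: carefully identifying the two pullback torsors $p^\ast Y$ and $a^\ast Y$ over $X \times G$, producing the comparison isomorphism canonically (not just abstractly) using Lemma~\ref{lem:Rosenlicht_Sansuc}, and then verifying associativity over $X \times G \times G$ without circularity. A secondary subtlety is ensuring $G$ (or a dense open) is separably rational so that Lemma~\ref{lem:Rosenlicht_Sansuc} genuinely applies — for a connected reductive group this holds since such groups are rational over $k$ (Bruhat decomposition gives a big open cell $\cong \mathbb{A}^N$), but one should state this cleanly. Everything else — extracting the $H$-action, checking (a)–(c), and uniqueness — should reduce to formal consequences of Theorem~\ref{thm:CTtorsor_to_group} and the two Rosenlicht-type lemmas.
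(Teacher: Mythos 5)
There is a genuine gap, and it sits at the very first step of your plan. The isomorphism $p^\ast Y \cong a^\ast Y$ over $X \times G$ that you propose to produce does not exist in general, and its existence is essentially the negation of the theorem's content: an isomorphism of the two pullbacks satisfying the cocycle condition is exactly a $G$-equivariant structure on the $S$-torsor $Y \to X$, i.e.\ descent data along $X \to \op{Spec} k$. Since $X$ is a $G$-torsor over $\op{Spec}k$, the stack quotient $[X/G]$ is $\op{Spec} k$ itself (not $BG$, as you assert), so such data would force $Y$ to be pulled back from an $S$-torsor over $k$ and the resulting extension $H$ to be the split one $S \times G$. No appeal to Lemma~\ref{lem:Rosenlicht_Sansuc} can manufacture this isomorphism; that lemma gives a direct sum decomposition $H^1(X \times G, S) \cong H^1(X,S) \oplus H^1(G,S)$ (using that $G$ is separably rational with a rational point), and the correct statement is that the two pullbacks differ by a uniquely determined class pulled back from $G$: one writes $[\sigma^\ast Y] = \pi^\ast[Y] + p^\ast[H]$ for a unique $S$-torsor $H \to G$, where $\sigma$ is the action map and $\pi,p$ the projections. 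That ``correction term'' $H$, which your argument tries to trivialize away, is precisely the object the theorem is about. One then checks that the fiber of $H$ at $e_G$ is the trivial $S$-torsor (e.g.\ via the torsor $\op{Iso}(\pi^\ast Y, \sigma^\ast Y)$ restricted along $X \times \{e_G\}$), so that Theorem~\ref{thm:CTtorsor_to_group} (injectivity, not just bijectivity, of $\psi$ since $H^1(k,S)$ is trivial for $S$ special) endows $H \to G$ with a central extension structure, unique up to isomorphism of extensions; this is also where the uniqueness claim comes from.

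A second, smaller omission: even after $H$ is produced, obtaining the $H$-action on $Y$ is not ``essentially by construction.'' The identification $\sigma^\ast Y \cong (Y \times H)/S$ gives only a map $\tau : Y \times H \to Y$ compatible with the $S$-actions and covering $\sigma$; it need not be an action. One must first normalize $\tau$ so that $\tau(y,e_H)=y$, and then show the associativity defect, a morphism $\omega : Y \times H \times H \to S$ factoring through $X \times G \times G$, is constant. This is where Lemma~\ref{lem:Rosenlicht} (Rosenlicht's lemma for special tori) is actually used: it lets you write $\omega$ as a product of functions of the separate variables, and evaluating at identity elements forces each factor, hence $\omega$, to be constant equal to $e_S$. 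Your proposal invokes the two Rosenlicht-type lemmas, but for the wrong purposes (to rigidify nonexistent descent data), and it skips the construction and verification of the action entirely. So the overall strategy as written would fail; the fix is to replace ``trivialize the difference of the two pullbacks'' by ``identify the difference as an $S$-torsor on $G$ and convert it into the extension $H$,'' and then carry out the action-construction step.
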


\begin{proof}
It is well known that every reductive algebraic group is rational over a separably
closed field.  In characteristic $0$, this is due to
Chevalley~\cite{Chevalley} --- in this case, for arbitrary linear algebraic
groups.
However, we could not find a direct reference for
the case of positive characteristic so we sketch a proof that $G$ is
separably rational here. 
From \cite[Exp. XXII Corollary 2.4]{SGA3III}, there is a finite separable extension $L/k$ such that $G_L$ has a split maximal torus $T$ and a system of roots. A set of positive roots and negative roots determines opposite Borels $B,B^\prime$ \cite[Exp. XXII Proposition 5.5.1 and 5.9.2]{SGA3III}.
The unipotent radical $B^u$ of $B$ is isomorphic to an affine space,
and thus $B^\prime  \cong B^{\prime u} \rtimes T$ is rational as well.
The natural map 
\begin{displaymath}
  B^u \times B^\prime \to G_L
\end{displaymath}
is an open immersion \cite[Exp. XXII Proposition 5.9.3]{SGA3III}.
 Hence, $G_L$ is $L$-rational.

Now, let $X \to \op{Spec} k$ be a $G$-torsor and consider the canonical map
\[ \gamma : H^1(X,S) \oplus H^1(G,S) \to H^1(X \times G,S) \]
given by
\[ \gamma(\alpha, \beta)=
\pi^\ast(\alpha) + p^\ast(\beta)
\]
where $\pi : X \times G \to X$
and $p : X \times G \to G$
are the projection maps.
We interpret $\gamma$ geometrically.
Let $Y \to X$ and $H \to G$ be $S$-torsors represented by the classes
$\alpha$ and $\beta$ as above.
Then $\pi^\ast(\alpha)$ represents the $S$-torsor
$Y \times G \to X \times G$ and $p^\ast(\beta)$
represents $X \times H \to X \times G$.
Their sum $\gamma(\alpha,\beta)$ is the quotient of
$(Y \times G) \times_{(X \times G)} (X \times H) \cong Y \times H$
by the diagonal $S$ action.

Note that $X$ is smooth over $\op{Spec} k$. Lemma~\ref{lem:Rosenlicht_Sansuc} says that $\gamma$ is an isomorphism.
Let $q_X : H^1(X \times G, S) \to H^1(X,S)$
and $q_G : H^1(X \times G, S) \to H^1(G,S)$ be the projections
obtained from the inverse of $\gamma$.
Let $\sigma : X \times G \to X$ be the action morphism.
We define
\[ \varphi : H^1(X,S) \to H^1(G,S) \]
as the composition $q_G \circ \sigma^\ast$. 

Let $Y \to X$ be an $S$-torsor.
Then $Z=\sigma^\ast Y$ is an $S$-torsor over $X \times G$.
Since $\gamma$ is an isomorphism,
there exist an $S$-torsor $W \to X$ and
an $S$-torsor $H \to G$ (both unique up to isomorphism)
such that the $(S \times S)$-torsor
$\tau: W \times H \to X \times G$ factors through $Z$ via the diagonal
$S$-action quotient.
In particular, $\varphi([Y])=[H]$.
Let $\iota : X \to X \times G$ be the inclusion via $X \times e_G$.
Since $\sigma \circ \iota$ and $\pi \circ \iota$
are the identity on $X$, we conclude that
$\iota^\ast([Z])=[Y]$ and so $W \cong Y$. 

We can explicitly identify $H \times X$. The torsor of local 
isomorphisms $\op{Iso}_{G \times X}(\pi^\ast Y, \sigma^\ast Y)$ 
realizes the difference $[\sigma^\ast Y] - [\pi^\ast Y]$. Since 
\begin{displaymath}
  \iota^\ast \op{Iso}_{X \times G}(\pi^\ast Y, \sigma^\ast Y)
\cong \op{Iso}_X(Y,Y) \cong S \times X
\end{displaymath}
we know that $e_G^\ast H = S$. Therefore, from Theorem~\ref{thm:CTtorsor_to_group} 
we can endow $H$ with an algebraic group structure sitting in a central extension
\[
1 \to S \to H \to G \to 1
\]
via Theorem~\ref{thm:CTtorsor_to_group}.
We have a commutative diagram
\[
\xymatrix{
Y \times H \ar[r]^{\tau} \ar[d] & Y \ar[d] \\
X \times G \ar[r]^{\sigma} & X}
\]
such that $\tau(ys,hs^\prime)=ss^\prime\tau(y,h)$ for
$y \in Y(\bar{k})$, $h \in H(\bar{k})$, and $s,s^\prime \in S(\bar{k})$.

Note that $\tau$ is not canonical and
may not necessarily be an action map for $H$.
However, let $\chi : Y \to Y$ be the composition
$Y \times \{e\} \to Y \times H \to Y$, which is a morphism of
$S$-torsors over $X$.
We replace $\tau$ with $\chi^{-1} \circ \tau$ and claim that now $\tau$
is a group action.

To check that $\tau$ is a group action,
it suffices to assume $k$ is algebraically closed.
By the modification above, we have $\tau(y,e_H)=y$ for $y \in Y(\bar{k})$.

Now $\sigma : X \times G \to X$ is a right action and thus
there exists a homomorphism $\omega: Y \times H \times H \to S$
factoring through $X \times G \times G$ such that
\begin{equation} \label{eq:putative_action}
\tau(y,h_1h_2) = \tau(\tau(y,h_1),h_2) \omega(y,h_1,h_2)
\end{equation}
for $y \in Y$ and $h_1,h_2 \in H$.
By Lemma~\ref{lem:Rosenlicht},
\[ \omega(y,h_1,h_2)=
\chi_1(y)
\chi_2(h_1)
\chi_3(h_2) \]
where $\chi_1 : Y \to S$ is a map factoring through $X$
and $\chi_2, \chi_3 : H \to S$ are morphisms
factoring through $G$.

Taking $h_1=h_2=e_H$ in \eqref{eq:putative_action}, we find
\[
y=y\chi_1(y)\chi_2(e_H)\chi_3(e_H),
\]
which shows that $\chi_1$ is a constant function.
Taking $h_1=e_H$ in \eqref{eq:putative_action}, we find
\[
\tau(y,h)=\tau(y,h)\chi_1(y)\chi_2(e_H)\chi_3(h),
\]
which shows that $\chi_3$ is a constant function.
Similarly, taking $h_2=e_H$ shows that $\chi_2$ is a constant function.
Thus $\omega$ is a constant function.
Since $\omega(y,e_H,e_H)=e_S$ and $\omega$ is constant,
we conclude that $\tau$ is an action.
\end{proof}

%%%%%%%%%%%%%%%%%%%%%%%%%%%%%%%%%%%%%%%%%%%%%%%%%%%%%%%%%%%%%%%%
%%%%%%%%%%%%%%%%%%%%%%%%%%%%%%%%%%%%%%%%%%%%%%%%%%%%%%%%%%%%%%%%
% SUBSECTION %
%%%%%%%%%%%%%%%%%%%%%%%%%%%%%%%%%%%%%%%%%%%%%%%%%%%%%%%%%%%%%%%%
%%%%%%%%%%%%%%%%%%%%%%%%%%%%%%%%%%%%%%%%%%%%%%%%%%%%%%%%%%%%%%%%

\subsection{An exact sequence of Sansuc}
\label{sec:sansuc}

Given a $G$-torsor $\pi : X \to Y$ with $Y$ smooth, a long exact sequence is
constructed by Sansuc in \cite[Proposition~6.10]{Sansuc1981},
which contains the important subsequence:
\[
\Pic(X) \xrightarrow{\varphi} \Pic(G) \to \Br(Y)
\xrightarrow{\pi^\ast} \Br(X).
\]
However, the map $\Pic(G) \to \Br(Y)$ is constructed by a series of maps
obtained from spectral sequences and thus is somewhat obscure.
For our applications, we need to know a specific interpretation for
this map.

The following theorem can be seen as an expansion of Sansuc's result, which extends $\gm$ to a general
special torus $S$ and explicitly describes the maps occurring in the
sequence over the base $\op{Spec} k$. We remind the reader that the map $\delta_X: H^1(G, S) \to H^2(k, S)$ is defined by $\delta_X([H])=\partial_H([X])$.

\begin{theorem} \label{thm:sansuc_with_delta}
Let $G$ be a reductive algebraic group.
Let $S$ be a special torus and suppose $\pi : X \to \op{Spec} k$ is a $G$-torsor.
Then the sequence
\begin{equation} \label{eq:sansuc}
H^1(X,S) \xrightarrow{\varphi}
H^1(G,S) \xrightarrow{\delta_X}
H^2(k,S) \xrightarrow{\pi^\ast}
H^2(X,S)
\end{equation}
is exact.
\end{theorem}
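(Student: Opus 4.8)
The plan is to prove exactness of \eqref{eq:sansuc} at each of the two interior spots, using Theorem~\ref{thm:torsors_on_torsors} (``torsors on torsors'') as the main geometric input and reducing everywhere to the already-known case $S=\gm$, which is Sansuc's original sequence, by the usual Weil-restriction-plus-retract argument on special tori.

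First I would set up the geometric dictionary. A class in $H^1(G,S)$ is, by Theorem~\ref{thm:CTtorsor_to_group}, the same as a central extension $1\to S\to H\to G\to 1$ (here $S$ special, so $\psi$ is an isomorphism), and $\delta_X([H])=\partial_H([X])$ is exactly the obstruction in $H^2(k,S)$ to lifting the $G$-torsor $X$ to an $H$-torsor. The map $\varphi=q_G\circ\sigma^\ast$ was defined in the proof of Theorem~\ref{thm:torsors_on_torsors}: an $S$-torsor $Y\to X$ is sent to the extension class $[H]\in H^1(G,S)$ produced by that theorem, i.e. the unique $H$ acting on $Y$ with $Y\to\Spec k$ an $H$-torsor lying over $X$. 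With these interpretations in hand:

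\emph{Exactness at $H^1(G,S)$.} If $[H]=\varphi([Y])$ for an $S$-torsor $Y\to X$, then by construction $Y\to\Spec k$ is an $H$-torsor refining $X$, so $X$ lifts to $H$ and $\delta_X([H])=\partial_H([X])=0$; thus $\im\varphi\subseteq\ker\delta_X$. Conversely, if $[H]\in\ker\delta_X$, then $\partial_H([X])=0$ means $X$ lifts to an $H$-torsor $Y\to\Spec k$; the composite $Y\to X$ is an $S$-torsor, and uniqueness in Theorem~\ref{thm:torsors_on_torsors} identifies the extension attached to $Y\to X$ with $[H]$, so $[H]=\varphi([Y])$. \emph{Exactness at $H^2(k,S)$.} The identity $\delta_X([H])=\partial_H([X])$ together with the long exact cohomology sequence of $1\to S\to H\to G\to 1$ gives, after pulling back along $\pi:X\to\Spec k$, that $\pi^\ast\delta_X([H])=\partial_{H_X}([X_X])$ where $X_X=X\times_k X$ has the canonical (diagonal) section, hence $\pi^\ast\circ\delta_X=0$, i.e. $\im\delta_X\subseteq\ker\pi^\ast$. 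For the reverse inclusion, take $\zeta\in H^2(k,S)$ with $\pi^\ast\zeta=0$. Here I would invoke the case $S=\gm$: Sansuc's sequence \cite[Proposition~6.10]{Sansuc1981} gives exactness of $\Pic(X)\xrightarrow{\varphi}\Pic(G)\to\Br(k)\xrightarrow{\pi^\ast}\Br(X)$, and then reduce the general special torus to $\gm$ exactly as in Lemma~\ref{lem:Rosenlicht_Sansuc}: the statement holds for $\gm$, hence for $R_{L/k}\gm$ via $H^i(-,R_{L/k}\gm)\cong H^i(-_L,\gm)$ and compatibility with the maps $\varphi,\delta_X,\pi^\ast$; hence for quasi-trivial tori by additivity in $S$; and finally for any special torus $S$ using a splitting $S\to Q\to S$ through a quasi-trivial $Q$ and functoriality of the whole sequence.

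\emph{The main obstacle} I anticipate is verifying that the ``geometric'' description of $\delta_X$ and $\varphi$ in terms of torsors-over-torsors is genuinely compatible with the map $\Pic(G)\to\Br(k)$ that appears implicitly in Sansuc's spectral-sequence construction — that is, checking that the abstract connecting/edge maps agree on the nose with $\partial_H([X])$ and $q_G\circ\sigma^\ast$ rather than merely up to sign or up to the ambiguity killed by $e^\ast$. This is precisely the point the excerpt flags as ``somewhat obscure,'' so I would treat it carefully, presumably by comparing the two constructions on the universal $H$-torsor $H\to G$ and using the splitting $H^1(G,S)=\Ext^1_k(G,S)\oplus e^\ast H^1(k,S)$ from Theorem~\ref{thm:CTtorsor_to_group} to pin down the normalization. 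Once that identification is in place, the three inclusions above are short, and the reduction from special $S$ to $\gm$ is routine.
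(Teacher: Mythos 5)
Your treatment of exactness at $H^1(G,S)$ and of the containment $\im(\delta_X)\subseteq\ker(\pi^\ast)$ matches the paper's argument: both use Theorem~\ref{thm:CTtorsor_to_group} and Theorem~\ref{thm:torsors_on_torsors} to pass between $S$-torsors over $X$ and central extensions, and both kill the lifting obstruction after pulling back to $X$ via the trivialization $X\times G\cong X\times X$. The genuine issue is the remaining containment $\ker(\pi^\ast)\subseteq\im(\delta_X)$. You propose to import it from Sansuc's sequence for $S=\gm$ and then run the Weil-restriction/retract reduction. But Sansuc's exactness is exactness with respect to \emph{Sansuc's} map $\Pic(G)\to\Br(k)$, assembled from spectral-sequence edge maps; to conclude $\ker(\pi^\ast)\subseteq\im(\delta_X)$ you must show that this map has the same image as $\delta_X$ (in practice, that the two maps agree, at least up to sign). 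That identification is precisely the ``somewhat obscure'' point the theorem is designed to make explicit --- it is the content of the statement, not a routine normalization --- and your proposal defers it (``presumably by comparing the two constructions on the universal $H$-torsor and using the splitting\dots''). The splitting $H^1(G,S)\cong\Ext^1_k(G,S)\oplus H^1(k,S)$ cannot pin it down by itself: two homomorphisms to $H^2(k,S)$ can agree on the $H^1(k,S)$ summand and still differ on $\Ext^1_k(G,S)$, so without actually unwinding Sansuc's construction (or citing a reference such as Borovoi--Demarche, which the paper notes covers only $S=\gm$ in characteristic $0$) the argument is incomplete exactly where the work lies.

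The paper avoids this comparison entirely and proves the hard inclusion directly: given $\zeta\in H^2(k,S)$ with $\pi^\ast\zeta=0$, interpret $\zeta$ as an $S$-gerbe $\mathcal{C}$ over $\Spec(k)$; triviality of the pullback yields an $S$-torsor $W\to X$ in $\mathcal{C}(X)$; Theorem~\ref{thm:torsors_on_torsors} (used here in an essential way, not only at the $H^1(G,S)$ spot) produces a central extension $1\to S\to H\to G\to 1$ acting on $W$; and one then writes down an explicit equivalence between $\mathcal{C}$ and the gerbe of lifts of $X$ to $H$, sending $Z\in\mathcal{C}(U)$ to $\op{Iso}^S_{X\times U}(W\times U,\,X\times Z)$. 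This gives $\zeta=\partial_H([X])=\delta_X([H])$ with no appeal to Sansuc's sequence, uniformly for any special torus in any characteristic. To salvage your route you would have to carry out the spectral-sequence comparison in detail, or replace that step with a direct construction of this kind.
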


When $S=\gm$ and the characteristic is $0$, this result is a special case of
\cite[Lemma 2.13]{BorovoiDemarche}.

\begin{proof}
\textbf{Claim:} The composite
$H^1(X,S) \xrightarrow{\varphi}
H^1(G,S) \xrightarrow{\delta_X}
H^2(k,S)$
is trivial.

Let $Y \to X$ be an $S$-torsor.
By Theorem~\ref{thm:CTtorsor_to_group},
there is an algebraic group $H$ representing $\varphi([Y])$ in
$H^1(G,S)$.  By construction, $Y \to \op{Spec} k$ is an $H$-torsor
whose image under $H^1(k,H) \to H^1(k,G)$ is the isomorphism class of
$X$.  In particular, $\partial_H([X])=0$.
We have $\delta_X([H])=\partial_H([X])$, so the claim follows.

\bigskip

\textbf{Claim:} 
$\im(\varphi) = \ker(\delta_X)$.

Suppose $H$ is a central extension of $G$ by $S$ such that
$\delta_X([H])=0$.  Then $\partial_H([X])=0$.
This implies there exist an $H$-torsor $Y$ and
a $(H \to G)$-equivariant map $Y \to X$.
Thus $Y \to X$ is an $S$-torsor and we conclude $\varphi([Y])=[H]$.

\bigskip

\textbf{Claim:} 
The composite
$H^1(G,S) \xrightarrow{\delta_X}
H^2(k,S) \xrightarrow{\pi^\ast}
H^2(X,S)$
is trivial.

Let $H$ be a group extension representing an element in
$H^1(G,S)$.
Recall from \cite[IV.3.4.1]{Giraud}
that $\partial_H([X])$ can be interpreted as (the equivalence
class of) the $S$-gerbe $\mathcal{G}$ of lifts of $X$ to $H$.
Specifically, $\mathcal{G}(U)$ is the category of
$H$-torsors $Y \to U$ with
$(H \to G)$-equivariant maps $Y \to X_U$.
To prove the lemma, we want to show the pullback of $\mathcal{G}$ along
$X \to \Spec(k)$ is a trivial gerbe.
This is equivalent to showing that $\mathcal{G}(X)$ is non-empty.
Thus, we need to find an $H$-torsor $Y \to X$
with an $(H \to G)$-equivariant map $Y \to X \times X$.
Of course, there is an isomorphism $X \times G \cong X \times X$
by the definition of a $G$-torsor, so the composite
$X \times H \to X \times G \cong X \times X$
is the desired map.

\bigskip

\textbf{Claim:}
$\im(\delta_X) = \ker(\pi^\ast)$.

Let $\mathcal{C}$ be an $S$-gerbe over $\Spec(k)$ such that
$p^\ast \mathcal{C}$ is trivial.
Then there exists an $S$-torsor $W \to X$ in $\mathcal{C}(X)$.
Let $H$ be the extension of $G$ by $S$ that acts on $W$
given in Theorem~\ref{thm:torsors_on_torsors}.
Let $\mathcal{G}$ be the $S$-gerbe of lifts of $X$ to $H$
as in the previous claim.
We will construct a morphism $\Upsilon : \mathcal{C} \to \mathcal{G}$
of $S$-gerbes, which is then automatically an equivalence.
This then proves the claim.

For a fixed base $U$ and $S$-torsors $A,B$,
let $\op{Iso}^S_U(A,B)$ denote the $U$-scheme of morphisms of
$S_U$-torsors $A \to B$.  Note that $\op{Iso}^S_U(A,B)$ is itself an
$S_U$-torsor over $U$ via pre- or post-composition.
An element $Z \in \mathcal{C}(U)$ is an $S$-torsor $Z \to U$.
Note that $W \times U$ and $X \times Z$ are canonically $S$-torsors
over $X \times U$.
Thus we can construct the $S$-torsor
\[
T_Z := \op{Iso}^S_{X \times U}(W \times U, X \times Z)
\]
over $X_U = X \times U$.
Note that $T_Z$ has an $H_U$-action via pre-composition.
Moreover, the composite $T_Z \to X_U \to U$ is
the quotient map for the $H_U$-action.
In fact, $T_Z \to U$ is an $H$-torsor with an $(H \to G)$-equivariant map
$T_Z \to X_U$.
In other words, $T_Z$ is an object in $\mathcal{G}(U)$.

The construction of $T_Z$ is functorial in $Z$,
and so we have a functor $\Upsilon : \mathcal{C} \to \mathcal{G}$
such that $\Aut_{\mathcal{C}}(Z) \to \Aut_{\mathcal{G}}(T_Z)$
induces the identity on $S$ after the canonical identifications.
We conclude that we have a morphism (thus an equivalence) of $S$-gerbes as
desired.
\end{proof}

%%%%%%%%%%%%%%%%%%%%%%%%%%%%%%%%%%%%%%%%%%%%%%%%%%%%%%%%%%%%%%%%
%%%%%%%%%%%%%%%%%%%%%%%%%%%%%%%%%%%%%%%%%%%%%%%%%%%%%%%%%%%%%%%%
% SUBSECTION %
%%%%%%%%%%%%%%%%%%%%%%%%%%%%%%%%%%%%%%%%%%%%%%%%%%%%%%%%%%%%%%%%
%%%%%%%%%%%%%%%%%%%%%%%%%%%%%%%%%%%%%%%%%%%%%%%%%%%%%%%%%%%%%%%%

\subsection{Blinstein and Merkurjev}
\label{sec:BM}

\begin{lemma} \label{lem:BrFunctionField}
Let $X$ be a regular variety over $k$
and let $S$ be a special torus over $k$.
Then the homomorphism obtained by the pullback of the generic point
\[
H^2(X,S) \to H^2(k(X),S)
\]
is injective.
\end{lemma}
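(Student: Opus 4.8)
The plan is a dévissage on $S$, reducing to the case $S=\gm$ in the same way as in the proofs of Lemmas~\ref{lem:Rosenlicht_Sansuc} and~\ref{lem:Rosenlicht}; the case $S=\gm$ is the only geometric input and is the heart of the matter.

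First I would dispose of $S=\gm$. Here $H^2(X,\gm)=\Br(X)$ and $H^2(k(X),\gm)=\Br(k(X))$, and the injectivity of the restriction $\Br(X)\to\Br(k(X))$ for a regular integral scheme $X$ with function field $k(X)$ is a classical theorem of Grothendieck (see \emph{Le groupe de Brauer~II}); this is where the hypothesis that $X$ is a regular variety enters in full. I would invoke it as a black box.

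Next, take $S=R_{L/k}\gm$ for a finite separable field extension $L/k$. As in the proof of Lemma~\ref{lem:Rosenlicht_Sansuc} one has $H^n(X,R_{L/k}\gm)\cong H^n(X_L,\gm)$ and $H^n(k(X),R_{L/k}\gm)\cong H^n(k(X)\otimes_k L,\gm)$. Since $L/k$ is separable, $X_L\to X$ is finite étale, hence $X_L$ is regular; its connected components $X_1,\dots,X_r$ are therefore regular integral, and $\prod_i k(X_i)\cong k(X)\otimes_k L$. Consequently $H^2(X,R_{L/k}\gm)\cong\bigoplus_i\Br(X_i)$ and $H^2(k(X),R_{L/k}\gm)\cong\bigoplus_i\Br(k(X_i))$, and under these identifications the map in question is the direct sum of the maps $\Br(X_i)\to\Br(k(X_i))$, each injective by the previous paragraph. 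Since $H^2(X,-)$ and $H^2(k(X),-)$ carry finite products of tori to finite direct sums, the statement then holds for every quasi-trivial torus. Finally, if $S$ is special, then $\widehat{S}$ is invertible by Proposition~\ref{prop:invertible_H1_vanish}, so there is a quasi-trivial torus $Q$ together with homomorphisms $S\to Q\to S$ whose composite is the identity; functoriality of $H^2$ in the torus variable then realizes $H^2(X,S)\to H^2(k(X),S)$ as a retract of the injective map $H^2(X,Q)\to H^2(k(X),Q)$, so it is injective as well.

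The only genuine obstacle is the base case — locating and correctly applying Grothendieck's injectivity theorem with the precise hypotheses. Everything else is the same dévissage already performed twice in this section, so I would keep it terse; the one point needing a moment's care is that $X_L\to X$ is étale, so that $X_L$ is again regular and has function ring $k(X)\otimes_k L$.
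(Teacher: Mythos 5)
Your proposal is correct and follows essentially the same route as the paper: the $\gm$ case via Grothendieck's injectivity of $\Br(X)\to\Br(k(X))$ for regular $X$, dévissage through Weil restrictions $R_{L/k}\gm$ and products to quasi-trivial tori, and the retract $S\to Q\to S$ for a general special torus. The extra care you take with the regularity of $X_L$ and its connected components is exactly what justifies the paper's terser step.
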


\begin{proof}
The case of $S=\gm$ is simply that case of Brauer groups
$\Br(X)\to \Br(k(X))$, which is standard
(see, e.g., \cite[Example 2.22]{Milne80}).
Thus, for any \'etale $k$-algebra $L$,
we have the injectivity of $H^2(X_L,\gm) \to H^2(L \otimes_k k(X),\gm)$
and thus injectivity of $H^2(X,Q) \to H^2(k(X),Q)$ for
all quasi-trivial tori $Q=R_{L/k}\gm$.
For a special torus $S$, the result follows from the
factorization $S \to Q \to S$ of the identity
through a quasi-trivial torus.
\end{proof}

The following is a mild generalization of \cite[Lemma 2.3]{Blinstein}.

\begin{lemma} \label{BM:Pic_stable}
Let $G$ be an algebraic group
and let $S$ be a special torus over $k$,
and suppose $K/k$ is a field extension such that $k$ is algebraically closed in
$K$.
Then the natural map $H^1(G,S) \to H^1(G_K,S_K)$ is an isomorphism.
\end{lemma}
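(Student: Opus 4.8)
The plan is to imitate the reduction used three times already in this section (compare the proofs of Lemma~\ref{lem:Rosenlicht_Sansuc}, Lemma~\ref{lem:Rosenlicht}, and Lemma~\ref{lem:BrFunctionField}): prove the statement first for $S=\gm$, then for Weil restrictions $R_{L/k}\gm$ with $L/k$ finite separable, then for quasi-trivial tori, and finally for arbitrary special tori. For $S=\gm$ one has $H^1(G,\gm)=\Pic(G)$, and the assertion that $\Pic(G)\to\Pic(G_K)$ is an isomorphism whenever $k$ is algebraically closed in $K$ is precisely \cite[Lemma~2.3]{Blinstein}; this is where the actual geometric content sits, and I would simply invoke it.

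The one genuinely new point, compared with the earlier lemmas, arises at the Weil-restriction step. Since $G_L\to G$ is finite \'etale, there are natural identifications $H^1(G,R_{L/k}\gm)\cong H^1(G_L,\gm)=\Pic(G_L)$ and $H^1(G_K,(R_{L/k}\gm)_K)=H^1(G_K,R_{(L\otimes_k K)/K}\gm)\cong\Pic(G_{L\otimes_k K})$, compatibly with the base-change map. To conclude via the $\gm$-case over the base $L$, I need to know that $L\otimes_k K$ is a field in which $L$ is algebraically closed. This is the elementary fact to be checked: if $k$ is algebraically closed in $K$, then a primitive element of $L/k$ keeps its (separable, irreducible) minimal polynomial irreducible over $K$ — a nontrivial factor over $K$ would have coefficients algebraic over $k$, hence in $k$ — so $L\otimes_k K$ is a field; and since $k^{\mathrm{sep}}\cap K=k$, the extensions $k^{\mathrm{sep}}$ and $K$ are linearly disjoint over $k$, which forces $k^{\mathrm{sep}}\cap(L\otimes_k K)=L$ inside $k^{\mathrm{sep}}K$, i.e. $L$ is algebraically closed in $L\otimes_k K$. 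I expect this field-theoretic lemma to be the main (and really the only) obstacle; once it is in hand the reduction is routine.

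For the remaining steps: $H^1(G,-)$ is additive in the coefficient torus, so the result passes from Weil restrictions to quasi-trivial tori $Q=\prod_i R_{L_i/k}\gm$ by taking products. Finally, a special torus $S$ admits a factorization $S\to Q\to S$ of the identity with $Q$ quasi-trivial; feeding this through $H^1(G,-)$ and $H^1(G_K,-)$ and using naturality of the base-change map exhibits $H^1(G,S)\to H^1(G_K,S_K)$ as a retract of the isomorphism $H^1(G,Q)\to H^1(G_K,Q_K)$, hence itself an isomorphism — exactly the argument concluding the proof of Lemma~\ref{lem:Rosenlicht}. (One should also confirm that whatever hypotheses \cite[Lemma~2.3]{Blinstein} places on $G$ are satisfied in the intended application, but this is not a serious issue.)
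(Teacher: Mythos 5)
Your proposal matches the paper's proof essentially step for step: the $\gm$ case is quoted from Blinstein--Merkurjev, the Weil-restriction case follows by adjunction using that $L\otimes_k K$ is a field, and the quasi-trivial and special cases follow by additivity and by retracting off a quasi-trivial torus. If anything you are slightly more careful than the paper at the one delicate point, namely verifying not only that $L\otimes_k K$ is a field but also that $L$ is algebraically closed in it, which is what is actually needed to reapply the $\gm$-case over the base $L$.
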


\begin{proof}
When $S=\gm$, this result is \cite[Lemma 2.3]{Blinstein},
which incidentally uses both a coflasque resolution of $G$
and a piece of \cite[Proposition 6.10]{Sansuc1981}
in the proof.  Thus we may assume the lemma holds for $S=\gm$.

Consider a finite separable extension $F/k$,
and let $S=R_{F/k}\gm$.
Note that $FK = F \otimes_k K$ is a field since $k$ is algebraically closed
in $K$.
From the $\gm$ case of the lemma, the natural map
\[
H^1(G_F,\G_{m,F}) \to H^1(G_{FK},\G_{m,FK})
\]
is an isomorphism.
By the Weil restriction adjunction, we see that
\[
H^1(G,R_{F/k}\G_{m,F}) \to H^1(G_{K},R_{FK/K}\G_{m,FK})
\]
is an isomorphism.
Note that $R_{FK/K}\G_{m,FK}$ is canonically isomorphic to
$(R_{F/k}\G_{m,F})_K$ since they both represent the functor
\[
A \mapsto ((F \otimes_k K) \otimes_K A)^\times
\]
on $K$-algebras $A$.
Thus,
the natural map $H^1(G,S) \to H^1(G_K,S_K)$ is an isomorphism
in the case where $S=R_{F/k}\gm$.

Since the functor $H^1(G,-)$ preserves finite limits,
the result holds when $S$ is quasi-trivial.
If $S$ is a special torus, then there is a quasi-trivial torus $Q$
along with morphisms $S \to Q \to S$ that compose to the identity.
The result now follows by functoriality of $H^1(G,-)$.
\end{proof}

\begin{proof}[\hypertarget{proof:BM}{Proof of Theorem~\ref{thm:superBM}}]
The following is adapted from the proof of
\cite[Theorem 2.4]{Blinstein}.
Note that this theorem has been proved in the case $S=\gm$ by
Lourdeaux~\cite{Lourdeaux}.

Recall that we want to show that the map
\[
\Ext^1_k(G,S) \to \op{Inv}^2_\ast(G,S),
\]
which takes an extension $\xi$ to its connecting homomorphism
$\partial_\xi$, is a group isomorphism.
Precomposing with the canonical identification
$H^1(G,S) \cong \Ext^1_k(G,S)$ we obtain
\[
\nu: H^1(G,S) \to \op{Inv}^2_\ast(G,S),
\]
which we will prove is an isomorphism.

The remainder of the proof makes use of \emph{versal torsors}
--- see \cite[Section 5]{Skip}.
Since $G$ is a linear algebraic group, there exists an embedding
of algebraic groups $G \to \GL_n$ for some $n$.
The quotient $\GL_n \to \GL_n/G$ is a $G$-torsor
and the pullback by the generic point $\pi \colon T \to \Spec(K)$
is a versal $G$-torsor.
Consider the map
\[
\theta : \op{Inv}^2_\ast(G,S) \to H^2(K,S)
\]
that sends a cohomological invariant $\alpha$ to its value
$\alpha(T)$ for the versal torsor $T \to \Spec(K)$.
By \cite[Theorem 12.3]{Skip}, the map $\theta$ is injective.

We claim $H^1(T,S)=0$.
From \cite[D\'efinition 1.8]{CT2008}, we recall that a
geometrically-integral variety $X$ over $k$ is \emph{finie-factorielle}
if $\Pic(X_K)=0$ for all finite separable field extensions $K/k$.
From \cite[Proposition 1.9]{CT2008}, if $X$ is smooth and
finie-factorielle, then so is every open subset.
In particular, $\GL_{n,k}$ is finie-factorielle since it is an open subset of
affine space.
From \cite[Proposition 1.10]{CT2008}, $H^1(X,Q)=0$ for every
finie-factorielle $X$ and quasi-trivial torus $Q$.
Thus $H^1(U,S)=0$ for any open subset $U$ of $\GL_n$ since $S$ is a
direct multiplicand of some such $Q$.
From \cite[\href{https://stacks.math.columbia.edu/tag/09YQ}{Tag 09YQ}]{stacks-project},
we conclude
\[
H^1(T,S) = \colim_U H^1(U,S) = 0
\]
where the limit is over all open subsets $U$ of $\GL_n$ containing $T$.

An element $H \in H^1(G,S)$ can be interpreted as a group extension
\[
1 \to S \to H \to G \to 1
\]
and therefore $\nu(H)(K)$ is the connecting homomorphism
$\partial_{H_K} : H^1(K,G_K) \to H^2(K,S_K)$;
in particular, $(\theta \circ \nu)(H)=\partial_{H_K}(T)$.
Let $j : H^1(G,S) \to H^1(G_K,S_K)$ be the isomorphism
from Lemma~\ref{BM:Pic_stable}.
We see that
$(\delta_T \circ j)(H) = \delta_{T}(H_K)$,
which is equal to $\partial_{H_K}(T)$ by the definition of $\delta_T$.
Thus, there is a commutative diagram
\[
\xymatrix{
& H^1(G,S) \ar[r]^\nu \ar[d]^j &
\op{Inv}^2_\ast(G,S) \ar[d]^{\theta} \\
H^1(T,S_K) \ar[r] &
H^1(G_K,S_K) \ar[r]^{\delta_T} &
H^2(K,S_K) \ar[r]^{\pi^\ast} &
H^2(T,S_K)
}
\]
where the bottom sequence is \eqref{eq:sansuc}
for the $G_K$-torsor $T$.
Since $H^1(T,S_K)=0$ and $j$ is an isomorphism, we see that $\delta_T$ is injective.

The pullback map $i : H^2(T,S) \to H^2(k(T),S)$ is injective
by Lemma~\ref{lem:BrFunctionField}.
The composite
\[
i \circ \pi^\ast \circ \theta \colon
\op{Inv}^2_\ast(G,S) \to H^2(k(T),S)
\]
takes a cohomological invariant $\alpha \in \op{Inv}^2_\ast(G,S)$ to
$\alpha\left( T \times_K \Spec(k(T)) \right)$
since $\alpha$ is a natural transformation of functors from $k\hyph\mathsf{Fld}$ to $\mathsf{Set}_\ast$.
Note that the generic point lifts to a rational point of the
torsor $T_{\Spec(k(T))}$.
Thus, the torsor $T$ is trivialized by $\Spec(k(T))$ and we conclude
that $\im(\theta) \subseteq \ker(\pi^\ast)$.
It follows that $\nu$ is an isomorphism. 
\end{proof}

%%%%%%%%%%%%%%%%%%%%%%%%%%%%%%%%%%%%%%%%%%%%%%%%%%%%%%%%%%%%%%%%
%%%%%%%%%%%%%%%%%%%%%%%%%%%%%%%%%%%%%%%%%%%%%%%%%%%%%%%%%%%%%%%%
%%%%%%%%%%%%%%%%%%%%%%%%%%%%%%%%%%%%%%%%%%%%%%%%%%%%%%%%%%%%%%%%
% BIBLIOGRAPHY %
%%%%%%%%%%%%%%%%%%%%%%%%%%%%%%%%%%%%%%%%%%%%%%%%%%%%%%%%%%%%%%%%
%%%%%%%%%%%%%%%%%%%%%%%%%%%%%%%%%%%%%%%%%%%%%%%%%%%%%%%%%%%%%%%%
%%%%%%%%%%%%%%%%%%%%%%%%%%%%%%%%%%%%%%%%%%%%%%%%%%%%%%%%%%%%%%%%

\bibliographystyle{halpha-abbrv}

\begin{thebibliography}{KMRT98}
\expandafter\ifx\csname url\endcsname\relax
  \def\url#1{\texttt{#1}}\fi
\expandafter\ifx\csname doi\endcsname\relax
  \def\doi#1{\burlalt{doi:#1}{http://dx.doi.org/#1}}\fi
\expandafter\ifx\csname urlprefix\endcsname\relax\def\urlprefix{URL }\fi
\expandafter\ifx\csname href\endcsname\relax
  \def\href#1#2{#2}\fi
\expandafter\ifx\csname burlalt\endcsname\relax
  \def\burlalt#1#2{\href{#2}{#1}}\fi

\bibitem[AT18]{Adams}
J.~Adams and O.~Ta\"{\i}bi.
\newblock Galois and {C}artan cohomology of real groups.
\newblock {\em Duke Math. J.}, 167(6):1057--1097, 2018.
\newblock \doi{10.1215/00127094-2017-0052}.

\bibitem[BDLM20]{patho}
M.~Ballard, A.~Duncan, A.~Larmarche, and P.~McFaddin.
\newblock Consequences of the existence of exceptional collections in arithmetic and rationality.
\newblock https://arxiv.org/abs/2009.10175, 2020.

\bibitem[BM13]{Blinstein}
S.~Blinstein and A.~Merkurjev.
\newblock Cohomological invariants of algebraic tori.
\newblock {\em Algebra Number Theory}, 7(7):1643--1684, 2013.
\newblock \doi{10.2140/ant.2013.7.1643}.

\bibitem[Blu10]{Blunk}
M.~Blunk.
\newblock Del {P}ezzo surfaces of degree 6 over an arbitrary field.
\newblock {\em J. Algebra}, 323(1):42--58, 2010.
\newblock \doi{10.1016/j.jalgebra.2009.09.006}.

\bibitem[Bor98]{BorovoiMemoir}
M.~Borovoi.
\newblock Abelian {G}alois cohomology of reductive groups.
\newblock {\em Mem. Amer. Math. Soc.}, 132(626):viii+50, 1998.
\newblock \doi{10.1090/memo/0626}.

\bibitem[Bor88]{Borovoi}
M.~V. Borovo\u{\i}.
\newblock Galois cohomology of real reductive groups and real forms of simple {L}ie algebras.
\newblock {\em Funktsional. Anal. i Prilozhen.}, 22(2):63--64, 1988.
\newblock \doi{10.1007/BF01077606}.

\bibitem[BD13]{BorovoiDemarche}
M.~Borovo\u{\i} and C.~Demarche.
\newblock Manin obstruction to strong approximation for homogeneous spaces.
\newblock {\em Comment. Math. Helv.}, 88(1):1--54, 2013.
\newblock \doi{10.4171/CMH/277}.

\bibitem[Che54]{Chevalley}
C.~Chevalley.
\newblock On algebraic group varieties.
\newblock {\em J. Math. Soc. Japan}, 6:303--324, 1954.
\newblock \doi{10.2969/jmsj/00630303}.

\bibitem[CT04]{CT2004}
J.-L. Colliot-Th{{\'e}}l{{\`e}}ne.
\newblock R{\'e}solutions flasques des groupes r{\'e}ductifs connexes.
\newblock {\em C. R. Math. Acad. Sci. Paris}, 339(5):331--334, 2004.

\bibitem[CT08]{CT2008}
J.-L. Colliot-Th\'{e}l\`ene.
\newblock R\'{e}solutions flasques des groupes lin\'{e}aires connexes.
\newblock {\em J. Reine Angew. Math.}, 618:77--133, 2008.
\newblock \doi{10.1515/CRELLE.2008.034}.

\bibitem[CTS77]{CTS77}
J.-L. Colliot-Th{{\'e}}l{{\`e}}ne and J.-J. Sansuc.
\newblock La {$R$}-{\'e}quivalence sur les tores.
\newblock {\em Ann. Sci. {\'E}cole Norm. Sup. (4)}, 10(2):175--229, 1977.

\bibitem[CTS87]{ColSan87Principal}
J.-L. Colliot-Th{\'e}l{\`e}ne and J.-J. Sansuc.
\newblock Principal homogeneous spaces under flasque tori: applications.
\newblock {\em J. Algebra}, 106(1):148--205, 1987.

\bibitem[SGA3]{SGA3III}
M.~Demazure and A.~Grothendieck.
\newblock {\em Sch\'{e}mas en groupes ({SGA} 3). {T}ome {III}. {S}tructure des sch\'{e}mas en groupes r\'{e}ductifs}, volume~8 of {\em Documents Math\'{e}matiques (Paris) [Mathematical Documents (Paris)]}.
\newblock Soci\'{e}t\'{e} Math\'{e}matique de France, Paris, 2011.
\newblock S\'{e}minaire de G\'{e}om\'{e}trie Alg\'{e}brique du Bois Marie 1962--64. [Algebraic Geometry Seminar of Bois Marie 1962--64], A seminar directed by M. Demazure and A. Grothendieck with the collaboration of M. Artin, J.-E. Bertin, P. Gabriel, M. Raynaud and J-P. Serre, Revised and annotated edition of the 1970 French original. Edited by P. Gille and P. Polo.

\bibitem[Dun16]{Duncan}
A.~Duncan.
\newblock Twisted forms of toric varieties.
\newblock {\em Transform. Groups}, 21(3):763--802, 2016.
\newblock \doi{10.1007/s00031-016-9394-5}.

\bibitem[Gil10]{GilleSurvey}
P.~Gille.
\newblock Serre's conjecture {II}: a survey.
\newblock In {\em Quadratic forms, linear algebraic groups, and cohomology}, volume~18 of {\em Dev. Math.}, pages 41--56. Springer, New York, 2010.

\bibitem[Gir71]{Giraud}
J.~Giraud.
\newblock {\em Cohomologie non ab{\'e}lienne}.
\newblock Springer-Verlag, Berlin, 1971.

\bibitem[Hur16]{Huruguen}
M.~Huruguen.
\newblock Special reductive groups over an arbitrary field.
\newblock {\em Transform. Groups}, 21(4):1079--1104, 2016.
\newblock \doi{10.1007/s00031-016-9378-5}.

\bibitem[Kne65a]{Kneser1}
M.~Kneser.
\newblock Galois-{K}ohomologie halbeinfacher algebraischer {G}ruppen \"{u}ber p-adischen {K}\"{o}rpern. {I}.
\newblock {\em Math. Z.}, 88:40--47, 1965.
\newblock \doi{10.1007/BF01112691}.

\bibitem[Kne65b]{Kneser2}
M.~Kneser.
\newblock Galois-{K}ohomologie halbeinfacher algebraischer {G}ruppen \"{u}ber p-adischen {K}\"{o}rpern. {II}.
\newblock {\em Math. Z.}, 89:250--272, 1965.
\newblock \doi{10.1007/BF02116869}.

\bibitem[KMRT98]{BOI}
M.-A. Knus, A.~Merkurjev, M.~Rost, and J.-P. Tignol.
\newblock {\em The book of involutions}, volume~44 of {\em American Mathematical Society Colloquium Publications}.
\newblock American Mathematical Society, Providence, RI, 1998.

\bibitem[Lou20]{Lourdeaux}
A.~Lourdeaux.
\newblock {Cohomological invariants of linear algebraic groups}.
\newblock working paper or preprint, Jan. 2020.
\newblock \urlprefix\url{https://hal.archives-ouvertes.fr/hal-02440601}.

\bibitem[ML98]{MacLane}
S.~Mac~Lane.
\newblock {\em Categories for the working mathematician}, volume~5 of {\em Graduate Texts in Mathematics}.
\newblock Springer-Verlag, New York, second edition, 1998.

\bibitem[Mer20]{Merk_special}
A.~Merkurjev.
\newblock Classification of special reductive groups.
\newblock https://www.math.ucla.edu/~merkurev/papers/special-1.pdf, 2020.

\bibitem[Mil80]{Milne80}
J.~S. Milne.
\newblock {\em \'{E}tale cohomology}, volume~33 of {\em Princeton Mathematical Series}.
\newblock Princeton University Press, Princeton, N.J., 1980.

\bibitem[PR94]{PlatonovRapinchuk}
V.~Platonov and A.~Rapinchuk.
\newblock {\em Algebraic groups and number theory}, volume 139 of {\em Pure and Applied Mathematics}.
\newblock Academic Press, Inc., Boston, MA, 1994.
\newblock Translated from the 1991 Russian original by Rachel Rowen.

\bibitem[Sal84]{SaltmanRR}
D.~J. Saltman.
\newblock Retract rational fields and cyclic {G}alois extensions.
\newblock {\em Israel J. Math.}, 47(2-3):165--215, 1984.
\newblock \doi{10.1007/BF02760515}.

\bibitem[San81]{Sansuc1981}
J.-J. Sansuc.
\newblock Groupe de {B}rauer et arithm{\'{e}}tique des groupes alg{\'{e}}briques lin{\'{e}}aires sur un corps de nombres.
\newblock {\em Journal f{\"{u}}r die reine und angewandte Mathematik}, 327:12--80, 1981.
\newblock \urlprefix\url{http://eudml.org/doc/152382}.

\bibitem[Ser02]{SerreGC}
J.-P. Serre.
\newblock {\em Galois cohomology}.
\newblock Springer Monographs in Mathematics. Springer-Verlag, Berlin, english edition, 2002.

\bibitem[Ser03]{Skip}
J.-P. Serre.
\newblock Cohomological invariants, {W}itt invariants, and trace forms.
\newblock In {\em Cohomological invariants in {G}alois cohomology}, volume~28 of {\em Univ. Lecture Ser.}, pages 1--100. Amer. Math. Soc., Providence, RI, 2003.

\bibitem[Stacks20]{stacks-project}
{The Stacks Project Authors}.
\newblock \textit{Stacks Project}.
\newblock \url{https://stacks.math.columbia.edu}, 2020.

\bibitem[Vos74]{VoskyStable}
V.~E. Voskresenski\u{\i}.
\newblock Stable equivalence of algebraic tori.
\newblock {\em Izv. Akad. Nauk SSSR Ser. Mat.}, 38:3--10, 1974.

\bibitem[Vos98]{Vosky}
V.~E. Voskresenski\u{\i}.
\newblock {\em Algebraic groups and their birational invariants}, volume 179 of {\em Translations of Mathematical Monographs}.
\newblock American Mathematical Society, Providence, RI, 1998.
\newblock Translated from the Russian manuscript by Boris Kunyavski [Boris \`E. Kunyavski\u{\i}].

\bibitem[Wat79]{Waterhouse}
W.~C. Waterhouse.
\newblock {\em Introduction to affine group schemes}, volume~66 of {\em Graduate Texts in Mathematics}.
\newblock Springer-Verlag, New York-Berlin, 1979.

\end{thebibliography}

\providecommand\noopsort[1]{}

\end{document}